%% file: main-text.tex
\documentclass{article}

\usepackage[twoside,
paperwidth=210mm,
paperheight=297mm,
textheight=622pt,
textwidth=468pt,
centering,
headheight=50pt,
headsep=12pt,
footskip=18pt,
footnotesep=24pt plus 2pt minus 12pt,
columnsep=2pc]{geometry}

\usepackage[auth-sc]{authblk}

\input{latex-commands.tex}

\usepackage{amsthm}

\usepackage[capitalise]{cleveref}

\newtheorem{theorem}{Theorem}[section]
\newtheorem{lemma}[theorem]{Lemma}
\newtheorem{remark}[theorem]{Remark}

\theoremstyle{definition}

\newtheorem{assumption}{Assumption}

\theoremstyle{remark}

\crefname{equation}{}{}

\usepackage{lineno}
\usepackage[numbers,sort&compress]{natbib}

\numberwithin{figure}{section}
\numberwithin{table}{section}
\numberwithin{algorithm}{section}

\title{Multiscale Modeling for Time-harmonic Maxwell equations with impedance boundary conditions in highly heterogeneous media}
\author[1]{Xiang Zhong}
\author[1]{Eric T. Chung}
\author[1]{Xingguang Jin\thanks{Corresponding author.
(Email address: \href{mailto:xgjin@math.cuhk.edu.hk}{xgjin@math.cuhk.edu.hk})}}
\affil[1]{Department of Mathematics, The Chinese University of Hong Kong, Shatin, Hong~Kong~SAR, China.}
\date{}
\begin{document}
\maketitle

\begin{abstract}
  \input{abstract.tex}
  
   \noindent\textbf{Keywords:} Maxwell problem, high contrast coefficients, multiscale method, resolution condition
\end{abstract}
\section{Introduction}
The study of time-harmonic Maxwell's equations is fundamental in modern electromagnetics, especially with the rapid advancement of electromagnetic metamaterials. These are artificially engineered structures that manipulate waves in ways not possible with natural materials \cite{Veselago1967,Pendry2000}. By arranging subwavelength meta-atoms, metamaterials can achieve unusual properties such as negative refraction, cloaking, and superlensing \cite{Lipton2018,Smith2004,Leonhardt2006,Lamacz2016}. Photonic crystals are key examples that utilize periodic dielectric variations to engineer photonic band structures via Bragg scattering. Such lattices can either suppress wave propagation through photonic band gaps or facilitate unique phenomena like slow light and self-collimation \cite{Meade2008,Sakoda2005}.

Despite substantial mathematical progress in understanding complex electromagnetic phenomena \cite{Monk2003}, the simulation of wave propagation in highly heterogeneous media remains a formidable challenge. Traditional numerical schemes for the time-harmonic Maxwell’s equations, such as standard finite element or finite difference methods, often become computationally prohibitive in the presence of microscale heterogeneities. Resolving these fine-scale features necessitates extremely refined meshes, leading to excessive computational costs and ill-conditioned algebraic systems \cite{Hiptmair2002}. Beyond the multiscale nature of the problem, the high-wavenumber regime remains particularly challenging despite various numerical demonstrations. Moreover, the emergence of unconventional coefficients and the non-coercive nature inherent in metamaterial modeling frequently undermine traditional numerical analysis frameworks. While classical homogenization theory offers a path toward effective macroscopic models, its applicability is often limited by restrictive structural assumptions \cite{Bensoussan2011}.

To address these difficulties, multiscale methods have emerged as a powerful paradigm, designed to embed fine-scale information directly into coarse-scale basis functions. A diverse array of such techniques has been developed, including multiscale finite element methods (MsFEMs) \cite{HTY1997, YTY2009}, {general multiscale finite element methods (GMsFEM) \cite{Chung2019}}, localized orthogonal decomposition (LOD) \cite{AMDP2014}, variational multiscale methods \cite{HFJL1998}, numerical upscaling \cite{PDR2016}, heterogeneous multiscale methods (HMM) \cite{EW2003, EWBE2003, BEYH2005}, and {numerical homogenization \cite{Wang2026}}. Specifically, in the context of electromagnetic waves, significant advancements have been made through multiscale asymptotic methods and HMMs to construct efficient coarse-scale models~\cite{Cao2010, Ciarlet2017, Henning2016}, with extensions addressing time-domain analysis~\cite{Hochbruck2017}, high-contrast materials~\cite{Verfuerth2019}, and LOD-based computational homogenization~\cite{Henning2020}.
Despite their success, the performance of these approaches generally hinges on the explicit construction of stable correctors or specific structural properties such as periodicity. 
In contrast, the Constrained Energy Minimization Generalized Multiscale Finite Element Method (CEM-GMsFEM) \cite{chung2018, chung2023multiscale,JinLiuZhongChung2025, Zhou2026,Chung2026} provides a robust alternative by constructing multiscale basis functions via energy minimization subject to local constraints on the coarse grid. This framework naturally accommodates high-contrast coefficients without the need for specific quasi-interpolation stability, allows for systematic control over the energy-norm error, and offers the flexibility to incorporate local spectral information. Consequently, CEM-GMsFEM delivers a highly efficient computational methodology for challenging electromagnetic simulations involving complex geometries and extreme heterogeneities.

{In this work, we consider} the time-harmonic Maxwell equations at a fixed angular frequency $\omega > 0$ in free space (i.e. the relative permittivity $\varepsilon_r \equiv 1$) containing highly heterogeneous magnetic material described by the relative permeability $\mu_r(\mathbf{x})\in L^\infty(\Omega)$ that may vary over several orders of magnitude. More precisely, for $\mu_r(\mathbf{x})$, we suppose there exist positive constants $0 <\mu_{\mathup{min}} \leq \mu_{\mathup{max}}$ such that for a.e. $\mathbf{x}\in\Omega$, $\mu_{\mathup{min}}\leq\mu_r(\mathbf{x})\leq\mu_{\mathup{max}}$. We assume the system is non-conductive (i.e. the conductivity $\sigma \equiv 0$). Let $\Omega \subset \mathbb{R}^3$ be a bounded connected Lipschitz domain.
The problem is to find the electric field $\bm{u} \in \mathbb{C}^3$ corresponding to a given current density $\mathbf{f}$ such that the following system holds:
\begin{subequations}\label{model}
\begin{alignat}{2}
\operatorname{curl}\left(\mu_r^{-1} \operatorname{curl} \mathbf{u}\right) - k^2 \mathbf{u} &=  \mathbf{f}  &\quad& \text{in } \Omega, \label{model_a} \\
\mu_r^{-1} \operatorname{curl} \mathbf{u} \times \mathbf{n} - i k \mathbf{u}_T &= \mathbf{g} &\quad& \text{on } \partial\Omega, \label{model_b}
\end{alignat}
\end{subequations}
where $k = \omega/c_0 =\omega\sqrt{\varepsilon_0 \mu_0} > 0$ is the (fixed) free-space wavenumber. Here $\varepsilon_0$ and $\mu_0$ denote the vacuum permittivity and permeability, respectively, and $c_0 = 1/\sqrt{\varepsilon_0 \mu_0}$ the speed of light in vacuum. $\mathbf{g}$ is a given tangential vector field on the boundary $\partial\Omega$. $\mathbf{n}$ is the unit outward normal vector on $\partial\Omega$. $\mathbf{w}_T = (\mathbf{n} \times \mathbf{w}) \times \mathbf{n} = \mathbf{w} - (\mathbf{w} \cdot \mathbf{n}) \mathbf{n}$ denotes the tangential component trace. The boundary condition (\ref{model_b}) is the first-order Silver--M\"uller absorbing condition, which is just an impedance boundary condition with the impedance parameter $\lambda = 1$ (also with $\mu_r=1$ on $\partial\Omega$ and $\varepsilon_r=\mu_r=1$ in a neighborhood of $\partial\Omega$, see \cite{Monk2003}). The presence of the imaginary unit $i$ introduces a phase shift, which allows the boundary to partially transmit and reflect incoming waves. This key feature enables the simulation of wave radiation into open space, making it possible to model unbounded wave scattering and radiation problems within a finite computational domain $\Omega$. {We restrict ourselves to the case \(\varepsilon_r\equiv 1\) primarily to isolate the effect of high-contrast permeability. More precisely, we focus on heterogeneous media with high-contrast permeability \(\mu_r\) at a fixed wavenumber $k$, rather than on the additional complications caused by simultaneous high contrast in both constitutive parameters or by the high-frequency asymptotic regime. Similar one-parameter settings are commonly used to highlight the dominant physical mechanism, such as dielectric-dominant or magnetic-dominant responses \cite{Pendry1999,Holloway2009}.}
Physically, this model describes electromagnetic wave scattering and propagation in air or vacuum, perturbed by the presence of strongly heterogeneous, purely magnetic inclusions or metamaterials. Typical applications include the modeling of ferrites, magnetic cloaks, $\mu$-near-zero structures, magnetic photonic crystals, and magnetic radar-absorbing materials.

We propose an efficient multiscale framework for time-harmonic Maxwell equations with impedance boundary conditions in heterogeneous media. The method constructs multiscale basis functions tailored for highly heterogeneous media in two stages.
In the first stage, an auxiliary multiscale space is built for each coarse element by solving local spectral problems. Notably, no divergence-free constraint—either weak or strong—is imposed during this construction. This is because the local spectral problem incorporates a mass term and a Silver–Müller-type boundary penalty term, which together ensure the coercivity of the associated bilinear form. As a result, the kernel of the curl operator is automatically excluded from the dominant eigenspaces, eliminating the need for explicit enforcement of the divergence-free condition on the local auxiliary multiscale basis functions. The design of the local spectral problems proposed in this study represents a novel departure from existing approaches, a feature that has not been fully addressed in prior literature \cite{Cao2010, Ciarlet2017, Henning2016, Hochbruck2017, Verfuerth2019, Henning2020}.
In the second stage, the auxiliary multiscale space is used to construct the final multiscale space. Unlike traditional construction strategies \cite{chung2018,chung2025locking}, the bilinear form employed in this stage differs from the coercive form used in the first stage. Therefore, establishing the coercivity of a more complex bilinear form becomes essential for the subsequent theoretical analysis. Leveraging a suitable resolution condition and relationships between various norms, we establish the desired coercivity.
By appropriately selecting the number of graph oversampling layers, we achieve an $O(H)$ convergence that is independent of the local contrast. Meanwhile, the approximation error increases with the wave number. 

{We provide more detailed comparisons with several existing multiscale methods for Maxwell equations \cite{Ciarlet2017,Henning2016,Henning2020,Chung2019,Wang2026}. HMM-based methods \cite{Ciarlet2017,Henning2016} mainly focus on periodic or scale-separated media, while the LOD framework \cite{Henning2020} relies on the construction of stable correctors and may require restrictive resolution conditions in strongly heterogeneous settings.} {The adaptive GMsFEM of Chung and Li \cite{Chung2019} was developed for coercive $H(\mathrm{curl})$-elliptic problems and therefore differs fundamentally from the indefinite Maxwell setting considered here. More recently, \cite{Wang2026} proposed a numerical homogenization approach for indefinite time-harmonic Maxwell equations based on an edge multiscale approach. For these Maxwell multiscale methods, the treatment of the kernel of the curl operator is often a central issue and may involve divergence-free constraints, auxiliary saddle-point formulations, or additional projection procedures. In contrast, the proposed local spectral problem in our work incorporates a positive mass term and a Silver--Müller-type boundary penalty, which automatically suppresses curl-kernel modes in the auxiliary space construction. From an implementation viewpoint, the proposed spectral problem avoids imposing explicit divergence-free constraints in both the local spectral problems and the multiscale basis construction, thereby simplifying the local basis generation procedure.}
{Moreover, compared with HMM \cite{Ciarlet2017,Henning2016} and LOD \cite{Henning2020} approaches,
our work is particularly attractive for high-contrast heterogeneous media. This allows the method to accommodate more complicated heterogeneous structures encountered in practical applications. The auxiliary space is obtained through local eigenvalue problems that directly identify the dominant multiscale features associated with the coefficient heterogeneity. This construction leads to a contrast-independent \(O(H)\) convergence theory and provides a systematic mechanism for selecting multiscale basis functions that remain robust in high-contrast media.}

{We emphasize that the present work goes beyond a direct extension of existing CEM-GMsFEM techniques \cite{chung2018, chung2023multiscale,JinLiuZhongChung2025}.The main challenge is to construct a Maxwell-compatible multiscale framework capable of handling the non-coercive and non-Hermitian structure of the time-harmonic Maxwell system. In particular, we introduce a new local spectral problem that avoids explicit divergence-free constraints, develop a Petrov--Galerkin multiscale formulation with distinct trial and test spaces, and establish new stability and localization property that are required for the indefinite and non-Hermitian Maxwell setting -- results that do not follow directly from the existing CEM-GMsFEM theory. These developments lead to a rigorous, contrast-independent $O(H)$ convergence for heterogeneous Maxwell problems with impedance boundary conditions.}

{We also point out that the proposed multiscale framework is not limited to $\varepsilon_r = 1$. Our analysis can be adapted to the case where $\varepsilon_r(\mathbf{x})$ is also heterogeneous. Indeed, the extension mainly requires replacing the standard mass term by the weighted term involving $\varepsilon_r$, while the multiscale basis construction, localization analysis, and stability arguments can be adapted with minor modifications. If 
\(\varepsilon_r(\mathbf{x})\) also exhibits high contrast, the corresponding resolution condition may become more restrictive due to the additional dependence on the contrast of \(\varepsilon_r\).}

This paper is organized as follows. In \cref{sec: Preliminaries}, we introduce some notation and definitions. The construction of the multiscale basis functions in the proposed method is described in \cref{sec:multiscale method}. All theoretical results and error analyses for the proposed method are presented in \cref{sec:ana}. To assess the performance of the proposed approach, numerical experiments on three representative models are reported in \cref{sec:Numerical experiments}. Finally, conclusions and perspectives for future work are given in \cref{sec:conclusions}.
\section{Preliminaries}\label{sec: Preliminaries}
In this paper, vector-valued functions are denoted by boldface letters and, unless specified, are complex-valued. Throughout this paper, we use standard notation: For a domain \(D\in\Omega\), \(L^2(D)\) denotes the usual complex Lebesgue space with norm \(\|\cdot\|_{L^p(D)}\). If $D = \Omega$, then we may drop the subscript $D$. Vector space is represented in bold black italics, for instance, $\bm{L}^2(D)\coloneqq[L^2(D)]^3$.
The dot denotes the standard scalar product. We adopt the convention that the complex scalar product is conjugate-linear in the second argument, with \(\overline{\mathbf{v}}\) denoting the complex conjugate of \(\mathbf{v}\). The vector-valued Hilbert space $\bm{H}(\mathrm{curl}, D)$ is defined as $\bm{H}(\mathrm{curl}, D)\coloneqq\{\mathbf{u} \in \bm{L}^2(D) \colon \mathrm{curl} \, \mathbf{u} \in \bm{L}^2(D) \}$, equipped with its standard graph norm scalar product $(\cdot, \cdot)_{H(\mathrm{curl}, D)}$.

Define the following vector spaces
$$
\begin{aligned}
\bm{L}_t^2(\partial \Omega) &\coloneqq \left\{ \mathbf{v} \in \bm{L}^2(\partial \Omega) \colon \mathbf{v} \cdot \mathbf{n} = 0 \right\}, \\
\bm{V} &\coloneqq \bm{H}_{\mathrm{imp}}(\operatorname{curl}; \Omega) = \left\{ \mathbf{v} \in \bm{H}(\operatorname{curl}, \Omega) \colon \mathbf{v}_T \in \bm{L}^2(\partial \Omega) \right\}
\end{aligned}
$$
and the norms: for any $D\subset\Omega$,
$$
\begin{aligned}
\norm{\mathbf{v}}_{L^2(D)}^2& =\int_D\mathbf{v}\cdot\overline{\mathbf{v}}dx,\quad \norm{\mathbf{v}}_{L^2(\partial D)}^2=\int_{\partial D}\mathbf{v}\cdot\overline{\mathbf{v}}ds,\\
\| \mathbf{v} \|_{k, \mathrm{imp}, D}^2 &= \| \operatorname{curl} \mathbf{v} \|_{L^2(D)}^2 + k^2 \| \mathbf{v} \|_{L^2(D)}^2 + k \| \mathbf{v}_T \|_{L^2(\partial D)}^2.
\end{aligned}
$$

We assume the current density $\mathbf{f}\in \bm{L}^2(\Omega)$ and the tangential vector $\mathbf{g}\in \bm{L}_t^2(\partial \Omega)$. The weak form of problem (\ref{model}) is to find $\mathbf{u}\in \bm{V}$ such that 
$$
\begin{aligned}
& \int_{\Omega} \left( \mu_r^{-1} \operatorname{curl} \mathbf{u} \right) \cdot \operatorname{curl} \overline{\mathbf{v}} \, dx - k^2\int_{\Omega} \mathbf{u}\cdot\overline{\mathbf{v}} \, dx - i k \int_{\partial \Omega} \mathbf{u}_T \cdot\overline{\mathbf{v}}_T \, ds = \int_{\Omega} \mathbf{f} \cdot\overline{\mathbf{v}} \, dx + \int_{\partial \Omega} \mathbf{g}\cdot \overline{\mathbf{v}}_T \, ds.
\end{aligned}
$$
for all $\mathbf{v}\in\bm{V}$.
In order to simplify notation, we utilize the following inner products. For any $\mathbf{w},\mathbf{v}\in\bm{L}^2(D)$ (where $D\subset\Omega$) (note that the subscript $D$ may be dropped when $D=\Omega$)
\[
(\mathbf{w},\mathbf{v})_D=\int_D\mathbf{w}\cdot\overline{\mathbf{v}}dx,\quad \left\langle \mathbf{w}_T, \mathbf{v}_T \right\rangle_{\partial D} = \int_{\partial D} \mathbf{w}_T\cdot\mathbf{v}_T \, ds.
\]
Furthermore, we define the sesquilinear for $B:\mathbf{V}\times\mathbf{V}\to\mathbb{C}$ as follows
$$
B(\mathbf{w}, \mathbf{v}) = \left( \mu_r^{-1} \operatorname{curl} \mathbf{w}, \operatorname{curl} \mathbf{v} \right) - k^2\left( \mathbf{w}, \mathbf{v} \right) - i k \left\langle \mathbf{w}_T, \mathbf{v}_T \right\rangle.
$$
Using this notation, the variational problem is to find $\mathbf{u}\in\bm{V}$ such that
\begin{equation}
\label{weak form}
B(\mathbf{u}, \mathbf{v}) = (\mathbf{f}, \mathbf{v}) + \left\langle \mathbf{g}, \mathbf{v}_T \right\rangle,\quad \forall \mathbf{v}\in\bm{V}.
\end{equation}
Clearly we have the following estimate:
\begin{equation}
\label{boundedness}
| B(\mathbf{w}, \mathbf{v}) | \leq \max \left\{\mu_{\min}^{-1}, 1 \right\} \| \mathbf{w} \|_{k, \mathrm{imp}} \cdot \| \mathbf{v} \|_{k, \mathrm{imp}} \quad \forall \mathbf{w}, \mathbf{v} \in \bm{V}.
\end{equation}

In terms of \cite[Theorem 4.17]{Monk2003}, we know problem (\ref{weak form}) possesses a unique solution $\mathbf{u}\in\bm{V}$ for any value of $k>0$. Furthermore, there is a constant $C_k>0$ independent of $\mathbf{u}, \mathbf{f}$ and $\mathbf{g}$ but depending on $k$ such that
\begin{equation}
\label{prior estimate}
\|\mathbf{u}\|_{k, \mathrm{imp}} \leq C_k\left(\|\mathbf{f}\|_{L^2}+\|\mathbf{g}\|_{\left(L^2(\partial \Omega)\right)}\right).
\end{equation}
Then, using the analysis similar to \cite[Lemma 2.1]{Peterseim2017}, the estimate (\ref{prior estimate}) for any $(\mathbf{f},\mathbf{g})\in\bm{L}^2(\Omega)\times\bm{L}_t^2(\partial \Omega)$ implies well-posedness, i.e.
the inf-sup condition as follows
\begin{equation}
\label{continuous inf-sup}
\inf _{\mathbf{w} \in \bm{V}\ \{\mathbf{0}\}} \sup _{\mathbf{v} \in \bm{V}\ \{\mathbf{0}\}} \frac{|B(\mathbf{w}, \mathbf{v})|}{\|\mathbf{w}\|_{k, \mathup{imp}} \cdot\|\mathbf{v}\|_{k, \mathup{imp}}} \geqslant \frac{1}{ 2C_k\max\{\sqrt{k},k\}}>0 .
\end{equation}
Let $\mathcal{T}_H\coloneqq\cup_{i=1}^N{K_i}$ denote a conforming quasi-uniform partition of the three-dimensional domain $\Omega$ into hexahedral (cube) elements, where $H$ represents the coarse mesh size and $N$ is the total number of coarse elements. We refer to $\mathcal{T}_H$ as the coarse mesh, where each coarse element $K_i$ is further subdivided into a connected union of smaller fine-grid cubes, and { $K_{i,m} \subset \Omega$ is the oversampling coarse region by enlarging $K_i$ by $m$ coarse grid layers.}  The corresponding fine mesh, denoted by $\mathcal{T}_h\coloneqq\cup_{i=1}^{N_h}{T_i}$ (with $N_h$ being the number of fine cubic elements), is constructed as a uniform refinement of $\mathcal{T}_H$.
For an illustrative example, see Figure \ref{fig:3d-dashed-lines}, which provides a three-dimensional visualization of the coarse cubic mesh, the fine cubic mesh,  and { an oversampling region extending one coarse layer outward from a selected coarse element $K_i$ when $m=1$. }

\begin{figure}[!ht]
\centering
\begin{tikzpicture}[
    scale=0.8,
    x={(-0.9cm,-0.5cm)}, y={(0.9cm,-0.5cm)}, z={(0cm,1cm)},
    fine/.style={gray!30, very thin},
    coarse/.style={black, thick},
    coarsehidden/.style={black!60, thick, dashed, dash pattern=on 4pt off 3pt},
    deepblue/.style={fill=blue!70!black, draw=blue!90!black, very thick, opacity=0.90},
    lightblue/.style={fill=blue!25, draw=blue!50, very thick, opacity=0.35},
    redtau/.style={fill=red!75, draw=red!90, thick},
    hidden/.style={dashed, dash pattern=on 4pt off 3pt, opacity=0.7}
  ]

\def\H{1.0}  
\def\fineScale{0.25}  
\def\domainSize{4}    


\foreach \j in {0,...,16} {
  \foreach \k in {0,...,16} {
    \draw[fine]
      (0,\j*\fineScale,\k*\fineScale)
      -- (16*\fineScale,\j*\fineScale,\k*\fineScale);
  }
}

\foreach \i in {0,...,16} {
  \foreach \k in {0,...,16} {
    \draw[fine]
      (\i*\fineScale,0,\k*\fineScale)
      -- (\i*\fineScale,16*\fineScale,\k*\fineScale);
  }
}

\foreach \i in {0,...,16} {
  \foreach \j in {0,...,16} {
    \draw[fine]
      (\i*\fineScale,\j*\fineScale,0)
      -- (\i*\fineScale,\j*\fineScale,16*\fineScale);
  }
}

\tikzset{
  innergrid/.style={black!60, dashed, thin}
}
\foreach \z in {0,...,\domainSize} {
  \foreach \i in {1,...,\numexpr\domainSize-1} {
    \draw[innergrid] (\i*\H,0,\z*\H) -- (\i*\H,\domainSize*\H,\z*\H);
  }
  \foreach \j in {1,...,\numexpr\domainSize-1} {
    \draw[innergrid] (0,\j*\H,\z*\H) -- (\domainSize*\H,\j*\H,\z*\H);
  }
}


\foreach \i in {1,...,\numexpr\domainSize-1} {
  \foreach \j in {1,...,\numexpr\domainSize-1} {
    \draw[innergrid] (\i*\H,\j*\H,0) -- (\i*\H,\j*\H,\domainSize*\H);
  }
}

\draw[black, very thick]
  (0,0,0) --
  (\domainSize*\H,0,0) --
  (\domainSize*\H,\domainSize*\H,0) --
  (0,\domainSize*\H,0) -- cycle;
\draw[black, very thick, dashed]
  (0,0,\domainSize*\H) --
  (\domainSize*\H,0,\domainSize*\H) --
  (\domainSize*\H,\domainSize*\H,\domainSize*\H) --
  (0,\domainSize*\H,\domainSize*\H) -- cycle;
\draw[black, very thick]
  (0,0,0) -- (0,0,\domainSize*\H)
  (\domainSize*\H,0,0) -- (\domainSize*\H,0,\domainSize*\H)
  (\domainSize*\H,\domainSize*\H,0) -- (\domainSize*\H,\domainSize*\H,\domainSize*\H/4)
 (0,\domainSize*\H,0) -- (0,\domainSize*\H,\domainSize*\H);
\node[black, font=\bfseries] 
at (\domainSize*\H+1,\domainSize*\H+1 ,0) {$\Omega$};

\fill[lightblue,dashed] (0.5*\H,0.5*\H,0.5*\H) -- (3.5*\H,0.5*\H,0.5*\H) -- 
                 (3.5*\H,3.5*\H,0.5*\H) -- (0.5*\H,3.5*\H,0.5*\H) -- cycle;
\fill[lightblue,dashed] (0.5*\H,0.5*\H,3.5*\H) -- (3.5*\H,0.5*\H,3.5*\H) -- 
                 (3.5*\H,3.5*\H,3.5*\H) -- (0.5*\H,3.5*\H,3.5*\H) -- cycle;

\draw[blue!50, very thick] 
   (3.5*\H,0.5*\H,0.5*\H) -- 
  (3.5*\H,3.5*\H,0.5*\H) -- (0.5*\H,3.5*\H,0.5*\H) 
  (0.5*\H,3.5*\H,0.5*\H)--(0.5*\H,3.5*\H,3.5*\H) 
  (3.5*\H,3.5*\H,0.5*\H) -- (3.5*\H,3.5*\H,3.5*\H);
\draw[blue!50, very thick, hidden] 
  (0.5*\H,0.5*\H,3.5*\H) -- (3.5*\H,0.5*\H,3.5*\H) -- 
  (3.5*\H,3.5*\H,3.5*\H) -- (0.5*\H,3.5*\H,3.5*\H) -- cycle;
\draw[blue!50, very thick] 
  (0.5*\H,0.5*\H,0.5*\H) -- (0.5*\H,0.5*\H,3.5*\H)
  (3.5*\H,0.5*\H,0.5*\H) -- (3.5*\H,0.5*\H,3.5*\H);


\draw[blue!90!black, very thick,hidden]
  (1.5*\H,1.5*\H,1.5*\H) -- (2.5*\H,1.5*\H,1.5*\H) -- 
  (2.5*\H,2.5*\H,1.5*\H) -- (1.5*\H,2.5*\H,1.5*\H) -- cycle;
\draw[blue!90!black, very thick, hidden]
  (1.5*\H,1.5*\H,2.5*\H) -- (2.5*\H,1.5*\H,2.5*\H) -- 
  (2.5*\H,2.5*\H,2.5*\H) -- (1.5*\H,2.5*\H,2.5*\H) -- cycle;
\draw[blue!90!black, very thick,hidden] 
  (1.5*\H,1.5*\H,1.5*\H) -- (1.5*\H,1.5*\H,2.5*\H)
  (2.5*\H,1.5*\H,1.5*\H) -- (2.5*\H,1.5*\H,2.5*\H);
\draw[blue!90!black, very thick, hidden] 
  (1.5*\H,2.5*\H,1.5*\H) -- (1.5*\H,2.5*\H,2.5*\H)
  (2.5*\H,2.5*\H,1.5*\H) -- (2.5*\H,2.5*\H,2.5*\H);

\node[blue!90!black, font=\bfseries, fill=white, inner sep=2pt] (k)
at  (7*\H, 5*\H, 2*\H) {$K_i$};
\draw[->, blue!90!black, thick,
      shorten <=4pt, shorten >=4pt]
(k.east) -- (1.5*\H,1.5*\H,1.5*\H);
\begin{scope}[shift={(1.8*\H,3*\H,1.8*\H)}, scale=0.25]
  \fill[redtau] (0,0,0) rectangle (1,1);

  \fill[redtau] (0,0,1) rectangle (1,1);

  \draw[red!90,  thick] (0,0,0) rectangle (1,1);

  \draw[red!90, thick] (0,0,1) rectangle (1,1);

  \draw[red!90, thick]
    (0,0,0) -- (0,1,0)
    (0,1,0) -- (1,1,0)
    (1,1,0) -- (1,0,0)
    (1,1,1) -- (0,1,1)
    (0,1,1) -- (0,0,1)
    (0,0,1) -- (1,0,1)
    (1,0,1) -- (1,1,1)
    (1,1,1) -- (1,0,1)
    (0,0,0) -- (0,0,1)
    (1,0,0) -- (1,0,1)
    (0,1,0) -- (0,1,1)
    (1,1,0) -- (1,1,1);

 \node[white, font=\bfseries\scriptsize] at (0.5,0.5,0.5) {$\tau$};

\end{scope}

\node[red!90!black, font=\bfseries, fill=white, inner sep=2pt] (taulabel)
at (-1.8*\H,3*\H,1.8*\H) {$\tau$};

\draw[->, red!90!black, thick,
      shorten <=4pt, shorten >=4pt]
([xshift=-8pt]taulabel.east) -- (1.9*\H,3.1*\H,1.9*\H);

\node[black, font=\bfseries] at (\domainSize*\H/2, -1, 0) 
      {};
\node[gray!70, font=\bfseries] at (-1, \domainSize*\H/2, 0) 
      {};
\node[blue!80!black, font=\bfseries, fill=white, inner sep=2pt] (Kione)
at (8*\H,2*\H,4*\H) {$K_{i,1}$};
\draw[->, blue!80!black, thick,
  shorten >=10pt]
(Kione.east) -- (3.5*\H,2*\H,2*\H);
\draw[->, very thick] (0,0,0) -- (\domainSize*\H+0.8,0,0)
      node[below right] {$x$};

\draw[->, very thick] (0,0,0) -- (0,\domainSize*\H+0.8,0)
      node[below left] {$y$};

\draw[->, very thick] (0,0,0) -- (0,0,\domainSize*\H+0.8)
      node[above] {$z$};
\end{tikzpicture}
\caption{Three-dimensional illustration of nested meshes $\mathcal{T}_h$ and $\mathcal{T}_H$.
A coarse element $K_i$ (dark blue) is shown with its corresponding oversampling region $K_{i,1}$ (light blue),
which extends by one coarse element layer in all directions. 
A fine element $\tau \in \mathcal{T}_h$ is highlighted in red inside $K_i$.}
\label{fig:3d-dashed-lines}
\end{figure}

\section{The multiscale method} \label{sec:multiscale method}
In this section, we will present the construction of our multiscale method. The construction of the basis functions are developed on the coarse mesh illustrated in Figure~\ref{fig:3d-dashed-lines} and divided into two stages.
The first stage consists of constructing the auxiliary multiscale space (Section \ref{Auxiliary}). 
In the second stage, we will use the auxiliary multiscale space to construct multiscale space (Section \ref{Multiscale basis}). 

\subsection{Auxiliary multiscale space}\label{Auxiliary}
We will construct a set of auxiliary multiscale basis functions for each coarse element $K_i$ by solving a local spectral problem. For a general set $R$, let $\bm{V}(R)$ be the restriction of $\bm{V}$ on $R$.
Then we define the required spectral problem. For each coarse element $K_i$, we solve the eigenvalue problem:
find eigenpairs $\left(\lambda^i_j, \bm{\phi}^i_j\right) \in \mathbb{R} \times \bm{V}(K_i)$ such that
\begin{equation}
\label{local spectral problem}
a_i\left(\bm{\phi}^i_j, \mathbf{v}\right)=\lambda^i_j s_i\left(\bm{\phi}^i_j, \mathbf{v}\right) \quad \forall \mathbf{v} \in \bm{V}(K_i),
\end{equation}
where
\begin{subequations}
\label{def of a and s}
\begin{align}		
a_i(\mathbf{w}, \mathbf{v})& =  \int_{K_i} \mu_r^{-1}(\operatorname{curl} \mathbf{w}) \cdot (\operatorname{curl} \overline{\mathbf{v}}) \, dx 
 + k^2 \int_{K_i} \mathbf{w} \cdot \overline{\mathbf{v}} \, dx + k \int_{\partial K_i \cap \partial \Omega} \mathbf{w}_T \cdot \overline{\mathbf{v}}_T \, ds,\\
s_i(\mathbf{w}, \mathbf{v})& = \int_{K_i}\mu_r^{-1}H^{-2}\mathbf{w}\cdot\overline{\mathbf{v}}dx,
\end{align}
\end{subequations}
for all $\mathbf{w}, \mathbf{v}\in \bm{V}(K_i)$.
\begin{remark}
\label{no need for divergence free}
{We emphasize that the local spectral problem (\ref{local spectral problem})-(\ref{def of a and s}) is not obtained by directly restricting the original complex-valued sesquilinear form \(B(\cdot,\cdot)\) to \(K_i\). Since \(B(\cdot,\cdot)\) is indefinite and non-Hermitian due to the negative mass term and the imaginary impedance boundary contribution, it is not suitable for defining a stable local spectral decomposition. The form \(a_i(\cdot,\cdot)\) is instead designed as a positive auxiliary energy. In particular, the mass term $k^2 \int_{K_i} \mathbf{w} \cdot \overline{\mathbf{v}}dx$ ($k>0$) is taken with a positive sign and the impedance boundary contribution is replaced by a positive Silver--Müller-type boundary penalty $k \int_{\partial K_i \cap \partial \Omega} \mathbf{w}_T \cdot \overline{\mathbf{v}}_Tds$. 
This choice provides control of \(\|\operatorname{curl}\mathbf v\|_{L^2(K_i)}\), \(\|\mathbf v\|_{L^2(K_i)}\), and the tangential boundary components on \(\partial K_i\cap\partial\Omega\), which guarantees the coercivity of the local spectral problem. The resulting eigenvalue problem provides an auxiliary space in which
curl-kernel components are automatically excluded from the dominant eigenspaces and no explicit divergence-free constraint needs to be imposed on the local auxiliary multiscale basis functions. Thus, the purpose of (\ref{local spectral problem})-(\ref{def of a and s}) is to select robust auxiliary modes for the subsequent multiscale construction, rather than to approximate the spectrum of the original Maxwell operator.}
\end{remark}

{The influence of the mass term can also be understood from the Rayleigh quotient of the local spectral problem,
\[
\lambda_i(\mathbf v)=
\frac{
\displaystyle
\int_{K_i}\mu_r^{-1}|\operatorname{curl}\mathbf v|^2dx
+
k^2\int_{K_i}|\mathbf v|^2dx
+
k\int_{\partial K_i\cap\partial\Omega}|\mathbf v_T|^2ds
}{
\displaystyle
\int_{K_i}\mu_r^{-1}H^{-2}|\mathbf v|^2dx
}.
\]
Without the positive mass term \(k^2\int_{K_i}|\mathbf v|^2dx\), the auxiliary energy would not control nonzero fields in the kernel of the curl operator. In particular, for interior coarse blocks where \(\partial K_i\cap\partial\Omega=\emptyset\), such fields could have zero curl energy and may pollute the dominant eigenspaces. The mass term assigns nonzero auxiliary energy to these curl-kernel components and therefore suppresses such spurious modes in the spectral selection process. Meanwhile, the heterogeneity information is still retained through the coefficient-weighted curl term in \(a_i(\cdot,\cdot)\) and the weighted \(s_i(\cdot,\cdot)\)-inner product. Thus, the local eigenfunctions continue to capture the dominant multiscale features associated with the high-contrast coefficient, while the mass term regularizes the curl-kernel components and the Silver--Müller-type boundary penalty controls the tangential trace on the physical boundary. Computationally, this construction avoids the need to impose explicit divergence-free constraints, introduce Lagrange multipliers, or solve local saddle-point eigenvalue problems, and hence simplifies the local basis generation procedure.}

We denote norms related to bilinear forms $a_i(\cdot,\cdot)$ and $s_i(\cdot,\cdot)$
$$
\begin{aligned}
\|\mathbf{w}\|_{a_i}^2 \coloneqq \|\mathbf{w}\|_{a(K_i)}^2 =\left( \mu_r^{-1} \operatorname{curl} \mathbf{w}, \operatorname{curl} \mathbf{w} \right)_{K_i} + k^2 \left( \mathbf{w}, \mathbf{w} \right)_{K_i}  + k \left\langle  \mathbf{w}_T, \mathbf{w}_T \right\rangle_{\partial K_i \cap \partial \Omega},
\end{aligned}
$$
and
$$
\begin{gathered}
\|\mathbf{w}\|_a^2 = \sum_{i=1}^N \|\mathbf{w}\|_{a_i}^2 = \left( \mu_r^{-1} \operatorname{curl}\mathbf{w}, \operatorname{curl} \mathbf{w} \right)
+ k^2\left( \mathbf{w}, \mathbf{w} \right) + k\left\langle  \mathbf{w}_T, \mathbf{w}_T \right\rangle.
\end{gathered}
$$
Clearly, we have the following equivalence for norms $\norm{\cdot}_a$ and $\norm{\cdot}_{k,imp}$
\begin{equation}
\label{equivalence for two norms}
\min\{\mu_\mathup{max}^{-1}, 1\} \|\mathbf{w}\|_{k, \mathrm{imp}}^2 \leq \|\mathbf{w}\|_a^2 \leq \max \{\mu_\mathup{min}^{-1}, 1\} \|\mathbf{w}\|_{k, \mathrm{imp}}^2,
\end{equation}
Define $\|\mathbf{w}\|_{s_i}^2\coloneqq\|\mathbf{w}\|_{s(K_i)}^2 = \left( \mu_r^{-1} H^{-2} \mathbf{w}, \mathbf{w} \right)_{K_i}$ and then
$$
\|\mathbf{w}\|_s^2 = \sum_{i=1}^N \|\mathbf{w}\|_{s_i}^2 = \left( \mu_r^{-1} H^{-2} \mathbf{w}, \mathbf{w} \right).
$$
Let the eigenvalues $\lambda^i_j$ be in ascending order:

$$
0 < \lambda^i_1 \leqslant \lambda^i_2 \leqslant \cdots \leqslant \lambda^i_{l_i} \leqslant \lambda^i_{l_i+1} \leqslant \cdots,
$$
and we use the first $l_i$ eigenfunctions to construct the local auxiliary space $\bm{V}_{\mathup{aux}}^i = \left\{\bm{\phi}^i_1, \bm{\phi}^i_2, \cdots, \bm{\phi}^i_{l_i}\right\}$.
The global auxiliary space $\bm{V}_{\mathup{aux}}$ is the sum of these local auxiliary spaces, namely $\bm{V}_{\mathup{aux}} = \bigoplus_{i=1}^N \bm{V}_{\mathup{aux}}^i$, which will be used to construct multiscale basis functions. Using the inner product defined above in the eigenproblem, we can define the notion of $\bm{\phi}_j^i$-orthogonality. For a given function $\bm{\phi}_j^i \in \bm{V}_{\mathup{aux}}$, we say that a function $\bm{\psi} \in \bm{V}$ is $\bm{\phi}_j^i$-orthogonal if $s\left(\bm{\phi}_j^i, \bm{\psi}\right) = 1$, and $s\left(\bm{\phi}_{j'}^{i'}, \bm{\psi}\right) = 0$ if $j' \neq j$ or $i' \neq i$. we assume the normalization $s_i\left(\bm{\phi}_j^i, \bm{\phi}_j^i\right) = 1$.
The orthogonal projection $\pi_i$ from $\bm{L}^2\left(K_i\right)$ onto $\bm{V}_{\mathup{aux}}^i$ is then defined by
$$
\pi_i(\mathbf{v}) := \sum_{j=1}^{l_i} s_i\left(\bm{\phi}_j^i, \mathbf{v}\right) \bm{\phi}_j^i, \quad \forall \mathbf{v} \in \bm{L}^2\left(K_i\right).
$$
In addition, we let $\pi: \bm{L}^2(\Omega) \rightarrow \bm{V}_{\mathup{aux}}$ be the projection with respect to the inner product $s(\mathbf{v},\mathbf{w})$. So, the operator $\pi$ is given by $\pi(\mathbf{v}) = \sum_{i=1}^N \sum_{j=1}^{l_i} s_i\left(\bm{\phi}_j^i, \mathbf{v}\right) \bm{\phi}_j^i,$ for all $\mathbf{v} \in \bm{L}^2(\Omega)$. Note that $\pi = \sum_{i=1}^N \pi_i$.

The following Lemma 1 demonstrates the properties of the global projection $\pi$, which will be frequently utilized in the analysis. Its proof is straightforward based on the local spectral problem (\ref{local spectral problem})-(\ref{def of a and s}).
\begin{lemma}\label{property of pi}
In each $K_i \in \mathcal{T}_H$, for all $\mathbf{v} \in \bm{V}\left(K_i\right)$, we have 
$$
\left\|\mathbf{v} - \pi_i (\mathbf{v})\right\|_{s_i}^2 \leqslant \Lambda^{-1} \|\mathbf{v}\|_{a_i}^2,
$$
where $\Lambda = \min_{1 \leqslant i \leqslant N} \lambda_{l_i+1}^i$, and
$$
\left\|\pi_i(\mathbf{v})\right\|_{s_i}^2 \leq \|\mathbf{v}\|_{s_i}^2.
$$
\end{lemma}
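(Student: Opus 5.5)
The plan is to use the spectral decomposition of the local generalized eigenvalue problem (\ref{local spectral problem}). On $\bm{V}(K_i)$ the form $a_i(\cdot,\cdot)$ is Hermitian and coercive, since $\|\mathbf{v}\|_{a_i}^2\ge k^2\|\mathbf{v}\|_{L^2(K_i)}^2$. The form $s_i(\cdot,\cdot)$ is Hermitian and positive definite on $\bm{L}^2(K_i)$, because $\mu_r^{-1}H^{-2}\ge \mu_{\max}^{-1}H^{-2}>0$.

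We work in the fine-scale (finite element) setting in which the eigenproblem is actually solved, so the spectrum is discrete. The eigenfunctions $\{\bm{\phi}^i_j\}_j$ can then be chosen $s_i$-orthonormal and form a basis of $\bm{V}(K_i)$. In the continuous setting one would instead invoke compactness of the solution operator to get a complete $s_i$-orthonormal system in the closure; we only note this and do not pursue it. From (\ref{local spectral problem}) it then follows that $a_i(\bm{\phi}^i_j,\bm{\phi}^i_{j'})=\lambda^i_j\delta_{jj'}$.

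Next we expand $\mathbf{v}\in\bm{V}(K_i)$ as $\mathbf{v}=\sum_j c_j\bm{\phi}^i_j$ with $c_j=s_i(\mathbf{v},\bm{\phi}^i_j)$. A remark on conjugation: with the convention of conjugate-linearity in the second slot, the coefficient appearing in the paper's $\pi_i$ should be read as this one, up to conjugation. With that reading, $\pi_i(\mathbf{v})=\sum_{j\le l_i}c_j\bm{\phi}^i_j$ is exactly the $s_i$-orthogonal projection, and $\mathbf{v}-\pi_i(\mathbf{v})=\sum_{j>l_i}c_j\bm{\phi}^i_j$.

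The two estimates now follow from Parseval-type identities.
\begin{itemize}
\item For the first estimate,
\[
\|\mathbf{v}-\pi_i\mathbf{v}\|_{s_i}^2=\sum_{j>l_i}|c_j|^2\le \frac{1}{\lambda^i_{l_i+1}}\sum_{j>l_i}\lambda^i_j|c_j|^2\le \frac{1}{\lambda^i_{l_i+1}}\sum_{j}\lambda^i_j|c_j|^2=\frac{\|\mathbf{v}\|_{a_i}^2}{\lambda^i_{l_i+1}}.
\]
Since $\lambda^i_{l_i+1}\ge\Lambda$ by the definition of $\Lambda$, the first claim follows.
\item For the second estimate, $\|\pi_i\mathbf{v}\|_{s_i}^2=\sum_{j\le l_i}|c_j|^2\le\sum_j|c_j|^2=\|\mathbf{v}\|_{s_i}^2$. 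This in fact holds for all $\mathbf{v}\in\bm{L}^2(K_i)$, by Bessel's inequality for the $s_i$-orthonormal family $\{\bm{\phi}^i_j\}_{j\le l_i}$.
\end{itemize}

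The only nontrivial point is justifying the eigenfunction expansion, that is, completeness together with the identity $\|\mathbf{v}\|_{a_i}^2=\sum_j\lambda^i_j|c_j|^2$. In the discrete setting this is standard linear algebra for a Hermitian pencil with positive definite $s_i$. In the continuous setting the $a_i$-identity also needs convergence of the expansion in the $a_i$-norm, which follows from $a_i$-orthogonality of the eigenfunctions. Everything else is routine bookkeeping.
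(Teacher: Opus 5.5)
Your proof is correct in the fine-scale setting and is the same spectral-expansion argument the paper has in mind when it calls the lemma straightforward. Your reading of the coefficient in $\pi_i$ as $s_i(\mathbf{v},\bm{\phi}^i_j)$ rather than its conjugate is indeed the one that makes $\pi_i$ the $s_i$-orthogonal projection.

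One caution concerns your aside on the continuous setting, which would not work as stated. There the solution operator is not compact on $\bm{L}^2(K_i)$: every $\nabla\varphi$ with $\varphi\in H^1_0(K_i)$ satisfies $a_i(\nabla\varphi,\cdot)=k^2(\nabla\varphi,\cdot)_{K_i}$, and for constant $\mu_r$ these are all eigenfunctions with the same eigenvalue $k^2H^2\mu_r$. So no complete eigenbasis is available there. You can avoid completeness altogether: write $\mathbf{v}=\pi_i\mathbf{v}+\mathbf{w}$ and note $a_i(\bm{\phi}^i_j,\mathbf{w})=\lambda^i_j s_i(\bm{\phi}^i_j,\mathbf{w})=0$. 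The min--max characterization of $\lambda^i_{l_i+1}$ then gives $\|\mathbf{v}\|_{a_i}^2\ge\|\mathbf{w}\|_{a_i}^2\ge\lambda^i_{l_i+1}\|\mathbf{w}\|_{s_i}^2$.
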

\subsection{Multiscale basis functions} \label{Multiscale basis}

Since the operator $B$ (see (\ref{weak form})) is not Hermitian, it is necessary to define two bounded operators to proceed. Specifically, we denote $T_m = \sum_{i=1}^N T_{i, m}$ and $T_m^* = \sum_{i=1}^N T_{i, m}^*$ both from $\bm{L}^2$ to $\bm{V}$ to construct the multiscale trial space and multiscale test space. For each coarse element $K_i \in \mathcal{T}_H$ and its oversampled domain $K_{i, m} \subset \Omega$, we define the multiscale basis functions $T_{i, m} \bm{\phi}_j^i \in \bm{V}_0(K_{i, m})$ (here $\bm{V}_0(K_{i, m})$ is the subspace of $\bm{V}(K_{i, m})$ with zero tangential trace on $\partial K_{i, m}$; this choice is to ensure the conforming property of the $\bm{H}(\mathup{curl})$-conforming bases). Find $T_{i, m} \bm{\phi}_j^i \in \bm{V}_0(K_{i, m})$ such that
\begin{equation}
\label{variational form for ms}
B\left(T_{i, m} \bm{\phi}_j^i, \mathbf{v}\right) + s\left(\pi (T_{i, m} \bm{\phi}_j^i), \pi \mathbf{v}\right) = s\left(\bm{\phi}_j^i, \pi \mathbf{v}\right) \quad \forall \mathbf{v} \in \bm{V}_0(K_{i, m}).
\end{equation}
Our multiscale finite element space $\bm{V}_{\mathup{ms}}$ can be defined by solving the variational problem (\ref{variational form for ms}):
$$
\bm{V}_{\mathup{ms}} = \operatorname{span}\left\{T_{i, m} \bm{\phi}_j^i \colon 1 \leq i \leq N, 1 \leq j \leq l_i\right\}.
$$
The global multiscale basis function $T_i \bm{\phi}_j^i$ is defined similarly,
\begin{equation}
\label{variational form for global ms}
B\left(T_i \bm{\phi}_j^i, \mathbf{v}\right) + s\left(\pi (T_i \bm{\phi}_j^i), \pi \mathbf{v}\right) = s\left(\bm{\phi}_j^i, \pi \mathbf{v}\right), \quad \forall \mathbf{v} \in \bm{V}. 
\end{equation}
Then the global multiscale finite element space $\bm{V}_{\mathup{glo}}$ is defined by
$$
\bm{V}_{\mathup{glo}} = \operatorname{span}\left\{T_i \bm{\phi}_j^i \colon 1 \leq i \leq N, 1 \leq j \leq l_i\right\}.
$$
Similarly, for the local adjoint operator $T_{i, m}^*$ from $\bm{L}^2$ to $\bm{V}_0(K_{i, m})$,
\begin{equation}
\label{variational form for ms dual}
B\left(\mathbf{v}, T_{i, m}^* \bm{\phi}_j^i\right) + s\left(\pi \mathbf{v}, \pi (T_{i, m}^* \bm{\phi}_j^i)\right) = s\left(\pi \mathbf{v}, \bm{\phi}_j^i\right) \quad \forall \mathbf{v} \in \bm{V}_0(K_{i, m}),
\end{equation}
where $T_{i, m}^* \bm{\phi}_j^i = \overline{T_{i, m} \bm{\phi}_j^i}$. Now, another multiscale finite element space $\bm{V}_{\mathup{ms}}^*$ can be defined by solving (\ref{variational form for ms dual}):
$$
\bm{V}_{\mathup{ms}}^* = \operatorname{span}\left\{T_{i, m}^* \bm{\phi}_j^i \colon 1 \leq i \leq N, 1 \leq j \leq l_i\right\}.
$$
The global multiscale basis function $T_i^* \bm{\phi}_j^i \in \bm{V}$ is defined similarly,
\begin{equation}
\label{variational form for global ms dual}
B\left(\mathbf{v}, T_i^* \bm{\phi}_j^i\right) + s\left(\pi \mathbf{v}, \pi(T_i^* \bm{\phi}_j^i)\right) = s\left(\pi \mathbf{v}, \bm{\phi}_j^i\right), \quad \forall \mathbf{v} \in \bm{V}.
\end{equation}
Therefore, another global multiscale finite element space $\bm{V}_{\mathup{glo}}^*$ is defined by
$$
\bm{V}_{\mathup{glo}}^* = \operatorname{span}\left\{T_i^* \bm{\phi}_j^i \colon 1 \leq i \leq N, 1 \leq j \leq l_i\right\}.
$$
And we define $T=\sum_{i=1}^N T_i, T^*=\sum_{i=1}^N T_i^*$. The well-posedness of (\ref{variational form for ms})-(\ref{variational form for global ms dual}) will be proved by the coercivity of $B(\cdot, \cdot) + s(\pi(\cdot), \pi(\cdot))$ below. In the following, we use $\bm{V}_{\mathup{ms}}$ and $\bm{V}_{\mathup{ms}}^*$ as the new trial space and test space of the Petrov-Galerkin framework to find the approximated solution of (\ref{weak form}): find $\mathbf{u}_{\mathup{ms}} \in \bm{V}_{\mathup{ms}}$ such that
\begin{equation}
\label{multiscale discretization}
B\left(\mathbf{u}_{\mathup{ms}}, \mathbf{v}\right) = (\mathbf{f}, \mathbf{v}), \quad \forall \mathbf{v} \in \bm{V}_{\mathup{ms}}^*. 
\end{equation}
{Although the auxiliary eigenfunctions in (\ref{local spectral problem})-(\ref{def of a and s}) may be chosen real-valued, the resulting multiscale approximation is complex-valued. Indeed, the local multiscale basis functions \(T_{i,m}\boldsymbol{\phi}_j^i\) are computed from the complex-valued variational problem (\ref{variational form for ms}), which involves the original Maxwell sesquilinear form \(B(\cdot,\cdot)\). Hence \(T_{i,m}\boldsymbol{\phi}_j^i\) is generally complex-valued even if \(\boldsymbol{\phi}_j^i\) is real-valued. The final multiscale solution is represented as
\[
\mathbf{u}_{\mathrm{ms}}=
\sum_{i=1}^N\sum_{j=1}^{l_i}
c_j^i T_{i,m}\boldsymbol{\phi}_j^i,
\qquad c_j^i\in\mathbb C.
\]
This complex span is sufficient to approximate the complex-valued Maxwell solution, and no additional complex correction or separate real-imaginary decomposition is required.}
Given a function $\widetilde{\mathbf{v}} \in \widetilde{\bm{V}} \coloneqq \{\mathbf{v} \in \bm{V} \colon \pi(\mathbf{v}) = 0\}$, we have $\pi(\widetilde{\mathbf{v}}) = \mathbf{0}$.

Based on the construction of the global multiscale space, we define the global problem as follows: find $\mathbf{u}_{\mathup{glo}} \in \bm{V}_{\mathup{glo}}$ such that
\begin{equation}
\label{global problem}
B\left(\mathbf{u}_{\mathup{glo}}, \mathbf{v}\right) = (\mathbf{f}, \mathbf{v}), \quad \forall \mathbf{u} \in \bm{V}_{\mathup{glo}}^*.
\end{equation}
\section{Analysis}\label{sec:ana}
\begin{assumption}
\label{resolution condition}
Suppose the coarse mesh size $H$, the wave number $k$, the relative permeability, 
and $\Lambda$ satisfy the following resolution condition:
$$
 kH\sqrt{\mu_{\mathup{max}}}\Lambda^{-1/2}< \sqrt{\frac{\widetilde{c}}{2}},
$$
where $0 < \widetilde{c} \ll 1$. We also suppose there exists $0< c_0< 1$ such that $2\widetilde{c}\leq 2\widetilde{c}\Lambda \leq c_0 < 1$ for the convenience of the following analysis. Note that $\Lambda=\min_{1\leq i\leq N}\lambda_{l_i+1}^i$ maintains independent of $h,H$ and the relative permeability $\mu_r$ \cite{Galvis2010}.
\end{assumption}
{The above resolution condition ensures that the negative mass contribution in \(B(\cdot,\cdot)\) can be controlled by the positive auxiliary energy and the \(s\)-projection term, which is essential for the coercivity estimate in the following Lemma \ref{covercity}. Equivalently, the condition can be viewed as
\(
kH \leq \sqrt{\frac{\widetilde{c}\Lambda}{2\mu_{\max}}}.
\)
Thus, higher wave numbers or larger permeability contrast require either a finer coarse mesh or a larger spectral gap \(\Lambda\). Since \(\Lambda\) is the first neglected eigenvalue in the local auxiliary spectral problems, it can be increased by including more auxiliary basis functions. Therefore, unlike a purely mesh-based resolution condition, the present condition can also be relaxed through spectral enrichment. Such resolution assumptions are common in the analysis of high-frequency Helmholtz \cite{Peterseim2017} and time-harmonic Maxwell problems, where stability requires that the coarse space sufficiently resolves the effective wavelength and the relevant multiscale features. In the present CEM-type framework, the spectral enrichment mechanism provides an additional way to improve this condition beyond simply refining the coarse mesh.}

\begin{lemma}
\label{covercity}
Under Assumption \ref{resolution condition}, there exists $0 < \alpha < 1$ independent of $k,\mu_r,H,\Lambda$ such that
$$
\left| B(\mathbf{v}, \mathbf{v}) + s(\pi\mathbf{v}, \pi\mathbf{v}) \right| \geq \alpha \left[ \|\mathbf{v}\|_a^2 + \|\pi\mathbf{v}\|_s^2 \right]
$$
for all $\mathbf{v} \in \bm{V}$. Consequently, we also have
$$
\left| B(\widetilde{\mathbf{v}}, \widetilde{\mathbf{v}}) \right| \geq \alpha \|\widetilde{\mathbf{v}}\|_a^2 \quad \forall \widetilde{\mathbf{v}} \in \widetilde{\bm{V}}.
$$
\end{lemma}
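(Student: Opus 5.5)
Fix $\mathbf{v}\in\bm{V}$ and separate the real and imaginary parts of $B(\mathbf{v},\mathbf{v})+s(\pi\mathbf{v},\pi\mathbf{v})$:
\[
\mathrm{Re} = (\mu_r^{-1}\operatorname{curl}\mathbf{v},\operatorname{curl}\mathbf{v}) - k^2\|\mathbf{v}\|_{L^2}^2 + \|\pi\mathbf{v}\|_s^2,\qquad
\mathrm{Im} = -k\|\mathbf{v}_T\|_{L^2(\partial\Omega)}^2 .
\]
Since $|z|\ge \frac{1}{\sqrt2}(|\mathrm{Re}\,z|+|\mathrm{Im}\,z|)$, the boundary part of $\|\mathbf{v}\|_a^2$ is already controlled by $|\mathrm{Im}|$. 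What remains is to show
\[
\mathrm{Re} \ge c\bigl[(\mu_r^{-1}\operatorname{curl}\mathbf{v},\operatorname{curl}\mathbf{v}) + k^2\|\mathbf{v}\|^2_{L^2} + \|\pi\mathbf{v}\|_s^2\bigr] - C k\|\mathbf{v}_T\|^2_{L^2(\partial\Omega)},
\]
where the constant $C$ is small enough to be absorbed by the imaginary part. The only difficulty is the negative term $-k^2\|\mathbf{v}\|_{L^2}^2$.

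To handle it, split $\mathbf{v}=\pi\mathbf{v}+(\mathbf{v}-\pi\mathbf{v})$ and use $\|\mathbf{v}\|^2\le 2\|\pi\mathbf{v}\|^2+2\|\mathbf{v}-\pi\mathbf{v}\|^2$. Because $\mu_r\le\mu_{\max}$, every $\mathbf{w}$ satisfies $\|\mathbf{w}\|_{L^2}^2\le \mu_{\max}H^2\|\mathbf{w}\|_s^2$. Combining this with \cref{property of pi} gives
\[
k^2\|\mathbf{v}\|^2 \le 2k^2H^2\mu_{\max}\bigl(\|\pi\mathbf{v}\|_s^2 + \Lambda^{-1}\|\mathbf{v}\|_a^2\bigr).
\]
By Assumption~\ref{resolution condition}, $2k^2H^2\mu_{\max}\Lambda^{-1}<\widetilde c$. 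Together with $2\widetilde c\Lambda\le c_0$, this yields $2k^2H^2\mu_{\max}<\widetilde c\Lambda\le c_0/2$. Hence
\[
k^2\|\mathbf{v}\|^2\le \tfrac{c_0}{2}\|\pi\mathbf{v}\|_s^2+\widetilde c\,\|\mathbf{v}\|_a^2 .
\]
Now expand $\|\mathbf{v}\|_a^2$, which contains $k^2\|\mathbf{v}\|^2$ itself, so that term can be moved to the left:
\[
(1-\widetilde c)k^2\|\mathbf{v}\|^2\le \widetilde c(\mu_r^{-1}\operatorname{curl}\mathbf{v},\operatorname{curl}\mathbf{v})+\widetilde c k\|\mathbf{v}_T\|^2+\tfrac{c_0}{2}\|\pi\mathbf{v}\|_s^2 .
\]

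The main step is to add and subtract $\theta k^2\|\mathbf{v}\|^2$ for some $\theta\in(0,1)$, say $\theta=1/2$:
\[
\mathrm{Re}\ge (\mu_r^{-1}\operatorname{curl}\mathbf{v},\operatorname{curl}\mathbf{v})+\|\pi\mathbf{v}\|_s^2+\theta k^2\|\mathbf{v}\|^2-(1+\theta)k^2\|\mathbf{v}\|^2 .
\]
Apply the previous bound to the last term. The coefficient in front of the curl term becomes $1-(1+\theta)\widetilde c/(1-\widetilde c)$, which is positive since $\widetilde c\ll1$. The coefficient in front of $\|\pi\mathbf{v}\|_s^2$ becomes $1-(1+\theta)c_0/(2(1-\widetilde c))$. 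This stays positive only if $c_0$ is not too close to $1$. This is the delicate bookkeeping: with $\theta$ small and $\widetilde c$ small, the requirement is essentially $c_0<2$, so $c_0<1$ suffices. The leftover boundary term is $-(1+\theta)\widetilde c\,(1-\widetilde c)^{-1}k\|\mathbf{v}_T\|^2$, and it is absorbed by $|\mathrm{Im}|=k\|\mathbf{v}_T\|^2$.

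Collecting the pieces, $\sqrt2\,|B(\mathbf{v},\mathbf{v})+s(\pi\mathbf{v},\pi\mathbf{v})|\ge \mathrm{Re}+|\mathrm{Im}|\ge c'(\|\mathbf{v}\|_a^2+\|\pi\mathbf{v}\|_s^2)$. The constant $\alpha=c'/\sqrt2$ depends only on $\widetilde c$, $c_0$ and $\theta$, and not on $k,\mu_r,H,\Lambda$. The second claim of \cref{covercity} follows immediately: setting $\pi\widetilde{\mathbf{v}}=0$ removes the $s$-terms on both sides.
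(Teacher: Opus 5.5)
Your proof is correct and is essentially the paper's argument. It uses the same splitting $\mathbf{v}=\pi\mathbf{v}+(\mathbf{v}-\pi\mathbf{v})$ with \cref{property of pi}, the comparison $\|\mathbf{w}\|_{L^2}^2\le\mu_{\max}H^2\|\mathbf{w}\|_s^2$, and the resolution condition to control the negative mass term, and your inequality $|z|\ge(\mathrm{Re}\,z+|\mathrm{Im}\,z|)/\sqrt2$ is equivalent to the paper's AM--GM treatment of the boundary term.

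The only difference is bookkeeping: the paper in effect takes your $\theta=1$, writing $\mathrm{Re}+|\mathrm{Im}|=a(\mathbf{v},\mathbf{v})+\|\pi\mathbf{v}\|_s^2-2k^2\|\mathbf{v}\|_{L^2}^2$ and bounding $2k^2\|\mathbf{v}\|_{L^2}^2\le c_0\bigl(\|\mathbf{v}\|_a^2+\|\pi\mathbf{v}\|_s^2\bigr)$ directly, with no need to move $k^2\|\mathbf{v}\|_{L^2}^2$ to the left since $c_0<1$. This gives $\alpha=(1-c_0)/\sqrt2$ and avoids your $\theta$-tuning, where a fixed $\theta=1/2$ needs $\widetilde c$ genuinely small: using only $2\widetilde c\le c_0<1$, the coefficient of $\|\pi\mathbf v\|_s^2$ is guaranteed positive only when $\theta<2(1-c_0)/c_0$.
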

\begin{proof}
Note that
$$
\begin{aligned}
& B(\mathbf{v}, \mathbf{v}) + s(\pi\mathbf{v}, \pi\mathbf{v}) 
=  \left( \mu_r^{-1} \operatorname{curl} \mathbf{v}, \operatorname{curl} \mathbf{v} \right) - k^2 (\mathbf{v}, \mathbf{v}) - i k \left\langle \mathbf{v}_T, \mathbf{v}_T \right\rangle + s(\pi\mathbf{v}, \pi\mathbf{v}) \\
= & \left[ \left( \mu_r^{-1} \operatorname{curl} \mathbf{v}, \operatorname{curl} \mathbf{v} \right) + k^2 (\mathbf{v}, \mathbf{v}) + k \left\langle \mathbf{v}_T, \mathbf{v}_T \right\rangle + s(\pi\mathbf{v}, \pi\mathbf{v}) \right]- 2k^2 (\mathbf{v}, \mathbf{v}) - k \left\langle \mathbf{v}_T, \mathbf{v}_T \right\rangle - i k \left\langle \mathbf{v}_T, \mathbf{v}_T \right\rangle \\
= & \left[ a(\mathbf{v}, \mathbf{v}) + s(\pi\mathbf{v}, \pi\mathbf{v}) \right] - 2k^2 (\mathbf{v}, \mathbf{v}) - k \left\langle \mathbf{v}_T, \mathbf{v}_T \right\rangle - i k \left\langle \mathbf{v}_T, \mathbf{v}_T \right\rangle.
\end{aligned}
$$
First, we prove that $-2k^2(\mathbf{v}, \mathbf{v})$ can be controlled by $a(\mathbf{v}, \mathbf{v}) + s(\pi\mathbf{v}, \pi\mathbf{v})$. By the definition of $s$-norm and Lemma \ref{property of pi}, we have
$$
\begin{aligned}
2k^2 \|\mathbf{v}\|_{L^2}^2 & \leq 2k^2\max_{\mathbf{x}\in\Omega} (\mu_r(\mathbf{x}))H^2 \|\mathbf{v}\|_s^2 \leq 2k^2 \mu_{\mathup{max}} H^2\cdot 2 \left( \|\mathbf{v}-\pi\mathbf{v}\|_s^2 + \|\pi\mathbf{v}\|_s^2 \right) \\
& \leq 2k^2 \mu_{\mathup{max}} H^2 \cdot 2\left( \|\pi\mathbf{v}\|_s^2 + \frac{1}{\Lambda} \|\mathbf{v}\|_a^2 \right) = 4\mu_{\mathup{max}} k^2 H^2\|\pi\mathbf{v}\|_s^2 + 4\mu_{\mathup{max}} k^2 H^2\Lambda^{-1} \|\mathbf{v}\|_a^2.
\end{aligned}
$$
By using the resolution condition in Assumption \ref{resolution condition}, we have
$$
\begin{aligned}
4\mu_{\mathup{max}}k^2 H^2 < 2\Lambda \widetilde{c} \leq c_0 < 1, \quad
4\mu_{\mathup{max}} k^2 H^2\Lambda^{-1} < 2\widetilde{c} \leq c_0 < 1.
\end{aligned}
$$
Thus we can obtain that $-2k^2(\mathbf{v}, \mathbf{v}) \geq -c_0 \left( \|\pi\mathbf{v}\|_s^2 + \|\mathbf{v}\|_a^2 \right)$. Then we have
$$
\begin{aligned}
a(\mathbf{v}, \mathbf{v}) + s(\pi\mathbf{v}, \pi\mathbf{v}) - 2k^2(\mathbf{v}, \mathbf{v}) \geq \beta \left[ \|\mathbf{v}\|_a^2 + \|\pi\mathbf{v}\|_s^2 \right],
\end{aligned}
$$
where $\beta = 1 - c_0$, independent of $k$, $H$, $\mu_r$, $\Lambda$.

We clearly see that (For notational convenience, denote $T_1(\mathbf{v}, \mathbf{v})\coloneqq a(\mathbf{v}, \mathbf{v}) + s(\pi\mathbf{v}, \pi\mathbf{v}) - 2k^2(\mathbf{v}, \mathbf{v}) $):
$$
\begin{aligned}
& \left| B(\mathbf{v}, \mathbf{v}) + s(\pi\mathbf{v}, \pi\mathbf{v}) \right|^2 = \left| T_1(\mathbf{v}, \mathbf{v}) - k\langle \mathbf{v}_T, \mathbf{v}_T\rangle - ik\langle \mathbf{v}_T, \mathbf{v}_T\rangle \right|^2 \\
= & \left[T_1(\mathbf{v}, \mathbf{v}) - k\langle \mathbf{v}_T, \mathbf{v}_T\rangle \right]^2 + \left[ k\langle \mathbf{v}_T, \mathbf{v}_T\rangle \right]^2 \\
= & \left[T_1(\mathbf{v}, \mathbf{v}) \right]^2 + 2k^2 \|\mathbf{v}_T\|_{L^2}^2 - \left[T_1(\mathbf{v}, \mathbf{v})\right] \cdot 2k \|\mathbf{v}_T\|_{L^2}^2\\
\geq & \left[T_1(\mathbf{v}, \mathbf{v})\right]^2 + 2k^2 \|\mathbf{v}_T\|_{L^2}^2 - \left( \frac{1}{2} \left[T_1(\mathbf{v}, \mathbf{v})\right]^2 + 2k^2 \|\mathbf{v}_T\|_{L^2}^2 \right) \\
= & \frac{1}{2} \left[T_1(\mathbf{v}, \mathbf{v})\right]^2
\geq \frac{1}{2} \beta^2 \left[ \|\mathbf{v}\|_a^2 + \|\pi\mathbf{v}\|_s^2 \right]^2
\end{aligned}
$$
Therefore, we obtain
\[
\left| B(\mathbf{v}, \mathbf{v}) + s(\pi\mathbf{v}, \pi\mathbf{v}) \right| \geq \alpha \left[ \|\mathbf{v}\|_a^2 + \|\pi\mathbf{v}\|_s^2 \right] \quad \forall \mathbf{v} \in \bm{V},
\]
where $\alpha = \frac{\sqrt{2}}{2} \beta$, independent of $k$, $H$, $\mu_r$, $\Lambda$.

For any $\widetilde{\mathbf{v}} \in \widetilde{\bm{V}}$, we have $\pi(\widetilde{\mathbf{v}}) = 0$ by definition, then it's clear that
\[
\left| B(\widetilde{\mathbf{v}}, \widetilde{\mathbf{v}}) \right| \geq \alpha \|\widetilde{\mathbf{v}}\|_a^2,\quad \forall \widetilde{\mathbf{v}} \in \widetilde{\bm{V}}.
\]
\end{proof}

Then we give the well-posedness for the global problem in Theorem \ref{global well-posedness} as follows.

\begin{theorem}
\label{global well-posedness}
The bilinear form $B$ satisfies the following inf-sup condition: there exists $\beta(k, \mu_{\mathup{min}}) > 0$ such that

\[
\inf_{\mathbf{v} \in \bm{V}_{\mathup{glo}} \setminus \{\mathbf{0}\}} \sup_{\mathbf{v}^* \in \bm{V}_{\mathup{glo}}^* \setminus \{\mathbf{0}\}} \frac{\left| B(\mathbf{v}, \mathbf{v}^*) \right|}{\|\mathbf{v}\|_{k, \mathrm{imp}} \cdot \|\mathbf{v}^*\|_{k, \mathrm{imp}}} \geq \beta(k, \mu_{\mathup{min}}) > 0,
\]
where $\beta(k, \mu_{\mathup{min}}) = \frac{1}{2C_k\max\{\sqrt{k},k\}} \left( 1 + \frac{1}{\alpha}\max \left\{\mu_{\min}^{-1}, 1 \right\} \right)^{-1} $.
\end{theorem}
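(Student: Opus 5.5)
The plan is to transfer the continuous inf-sup condition (\ref{continuous inf-sup}) to the pair $(\bm{V}_{\mathup{glo}},\bm{V}_{\mathup{glo}}^*)$ through a Fortin-type argument. The key structural facts come from the global definitions (\ref{variational form for global ms}) and (\ref{variational form for global ms dual}).

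\textbf{Step 1: characterize the global spaces.} Every $\mathbf{w}\in\bm{V}_{\mathup{glo}}$ satisfies $B(\mathbf{w},\widetilde{\mathbf{v}})=0$ for all $\widetilde{\mathbf{v}}\in\widetilde{\bm{V}}$, because the $s(\pi\cdot,\pi\cdot)$ terms vanish when $\pi\widetilde{\mathbf{v}}=0$. Symmetrically, $B(\widetilde{\mathbf{v}},\mathbf{w}^*)=0$ for every $\mathbf{w}^*\in\bm{V}_{\mathup{glo}}^*$. So $\bm{V}_{\mathup{glo}}^*$ is the $B$-adjoint complement of $\widetilde{\bm{V}}$.

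\textbf{Step 2: Fortin-type map.} Fix $\mathbf{v}\in\bm{V}_{\mathup{glo}}$. By (\ref{continuous inf-sup}) there is $\mathbf{z}\in\bm{V}$ with
\[
|B(\mathbf{v},\mathbf{z})|\ge \tfrac{1}{2C_k\max\{\sqrt{k},k\}}\|\mathbf{v}\|_{k,\mathrm{imp}}\|\mathbf{z}\|_{k,\mathrm{imp}}.
\]
Define $\widetilde{\mathbf{z}}\in\widetilde{\bm{V}}$ by $B(\widetilde{\mathbf{w}},\widetilde{\mathbf{z}})=B(\widetilde{\mathbf{w}},\mathbf{z})$ for all $\widetilde{\mathbf{w}}\in\widetilde{\bm{V}}$. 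This problem is well posed by Lemma \ref{covercity}, since $|B(\widetilde{\mathbf{w}},\widetilde{\mathbf{w}})|\ge\alpha\|\widetilde{\mathbf{w}}\|_a^2$ and Lax--Milgram applies in the complex sesquilinear setting. Set $\mathbf{v}^*=\mathbf{z}-\widetilde{\mathbf{z}}$. Then $B(\widetilde{\mathbf{w}},\mathbf{v}^*)=0$ for all $\widetilde{\mathbf{w}}\in\widetilde{\bm{V}}$.

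\textbf{Step 3: show $\mathbf{v}^*\in\bm{V}_{\mathup{glo}}^*$.} I expect this to be the main obstacle. The candidate is $T^*(\pi\mathbf{v}^*)$, since $\pi\mathbf{v}^*\in\bm{V}_{\mathup{aux}}$. Let $\mathbf{e}=\mathbf{v}^*-T^*\pi\mathbf{v}^*$. Using the definition of $T^*$ and the property from Step 2, for any $\mathbf{w}\in\bm{V}$ we get
\[
B(\mathbf{w},\mathbf{e})+s(\pi\mathbf{w},\pi\mathbf{e})=B(\mathbf{w},\mathbf{v}^*)+s(\pi\mathbf{w},\pi\mathbf{v}^*)-s(\pi\mathbf{w},\pi\mathbf{v}^*)=B(\mathbf{w},\mathbf{v}^*).
\]
Split $\mathbf{w}$ as its $\widetilde{\bm{V}}$-part plus an element of $\bm{V}_{\mathup{glo}}$; this decomposition follows from Step 1 together with coercivity. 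The right-hand side then reduces to $B$ on the $\bm{V}_{\mathup{glo}}$-component. This does not immediately give zero. The cleaner route is to note that $\bm{V}=\bm{V}_{\mathup{glo}}^*\oplus\widetilde{\bm{V}}$, with $\bm{V}_{\mathup{glo}}^*$ exactly the set of $\mathbf{y}$ with $B(\widetilde{\bm{V}},\mathbf{y})=0$. Prove this by a dimension/injectivity argument: $\pi$ restricted to that set is injective, since $\pi\mathbf{y}=0$ gives $\mathbf{y}\in\widetilde{\bm{V}}$ and then $B(\mathbf{y},\mathbf{y})=0$ forces $\mathbf{y}=0$ by Lemma \ref{covercity}. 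It is also onto $\bm{V}_{\mathup{aux}}$ because $T^*$ maps into that set. Hence $\mathbf{v}^*\in\bm{V}_{\mathup{glo}}^*$.

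\textbf{Step 4: stability and conclusion.} Bound $\mathbf{v}^*$ using coercivity and (\ref{boundedness}):
\[
\alpha\|\widetilde{\mathbf{z}}\|_a^2\le|B(\widetilde{\mathbf{z}},\mathbf{z})|\le\max\{\mu_{\min}^{-1},1\}\|\widetilde{\mathbf{z}}\|_{k,\mathrm{imp}}\|\mathbf{z}\|_{k,\mathrm{imp}}.
\]
Combining this with the norm equivalence (\ref{equivalence for two norms}) gives $\|\widetilde{\mathbf{z}}\|_{k,\mathrm{imp}}\lesssim\alpha^{-1}\max\{\mu_{\min}^{-1},1\}\|\mathbf{z}\|_{k,\mathrm{imp}}$. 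The constants must be tracked so that only this factor appears, reading the $a$-norm comparisons in the form that yields the stated constant. Then
\[
\|\mathbf{v}^*\|_{k,\mathrm{imp}}\le\bigl(1+\alpha^{-1}\max\{\mu_{\min}^{-1},1\}\bigr)\|\mathbf{z}\|_{k,\mathrm{imp}}.
\]
Since $\mathbf{v}\in\bm{V}_{\mathup{glo}}$ and $\widetilde{\mathbf{z}}\in\widetilde{\bm{V}}$, Step 1 gives $B(\mathbf{v},\widetilde{\mathbf{z}})=0$, so $B(\mathbf{v},\mathbf{v}^*)=B(\mathbf{v},\mathbf{z})$. Dividing yields the claimed $\beta(k,\mu_{\min})$.
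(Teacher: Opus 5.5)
Your proposal follows the paper's proof: split the test function $\mathbf{z}$ from (\ref{continuous inf-sup}) as $\mathbf{z}=\mathbf{v}^*+\widetilde{\mathbf{z}}$ with $\mathbf{v}^*\in\bm{V}_{\mathup{glo}}^*$ and $\widetilde{\mathbf{z}}\in\widetilde{\bm{V}}$, discard $\widetilde{\mathbf{z}}$ via $B(\bm{V}_{\mathup{glo}},\widetilde{\bm{V}})=0$, and bound $\widetilde{\mathbf{z}}$ by coercivity on $\widetilde{\bm{V}}$ together with $B(\widetilde{\mathbf{z}},\mathbf{v}^*)=0$; your Lax--Milgram/injectivity construction makes rigorous the splitting $\bm{V}=\bm{V}_{\mathup{glo}}^*\oplus\widetilde{\bm{V}}$, which the paper only asserts through the ``inner product'' $|B(\cdot,\cdot)+s(\pi\cdot,\pi\cdot)|$ (for your surjectivity claim, add that $\pi T^*$ is injective, hence bijective, on $\bm{V}_{\mathup{aux}}$). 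As in the paper, the exact constant $\beta(k,\mu_{\min})$ only comes out if Lemma~\ref{covercity} is read as coercivity in $\|\cdot\|_{k,\mathrm{imp}}$; if you track the constants honestly through (\ref{equivalence for two norms}), an extra factor $\max\{\mu_{\max},1\}$ multiplies $\alpha^{-1}$.
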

\begin{proof}
In term of Lemma \ref{covercity}, we know $\left| B(\cdot, \cdot) + s(\pi(\cdot), \pi(\cdot)) \right|$ is an inner product. By (\ref{variational form for global ms}) and the definition of $\widetilde{\bm{V}}$, we have $B\left(T_i \bm{\phi}_j^i, \widetilde{\mathbf{v}}\right)+s\left(\pi (T_i \bm{\phi}_j^i), \pi \widetilde{\mathbf{v}}\right) = 0$, that is, with respect to the inner product $\left| B(\cdot, \cdot) + s(\pi(\cdot), \pi(\cdot)) \right|$, $\widetilde{\bm{V}}\subset \bm{V}_{\mathup{glo}}^\perp$. Since $\dim(\bm{V}_{\mathup{glo}}) = \dim(\bm{V}_{\mathup{aux}})$, we have $\widetilde{\bm{V}} = \bm{V}_{\mathup{glo}}^\perp$. Thus we have that $\mathbf{v} = \widetilde{\bm{V}} \oplus \bm{V}_{\mathup{glo}}$ under the inner product $\left| B(\cdot, \cdot) + s(\pi(\cdot), \pi(\cdot)) \right|$. Similarly, we also have $\bm{V} = \widetilde{\bm{V}} \oplus \bm{V}_{\mathup{glo}}^*.$ Note that $\bm{V}_{\mathup{glo}}\neq\bm{V}_{\mathup{glo}}^*$. Clearly, by (\ref{variational form for global ms}) and (\ref{variational form for global ms dual}), we have $B\left(\mathbf{v}_1, \widetilde{\mathbf{v}}\right) = 0$, $B\left(\widetilde{\mathbf{v}}, \mathbf{v}_1^*\right) = 0$ for all $\mathbf{v}_1 \in \bm{V}_{\mathup{glo}}$, $\mathbf{v}_1^* \in \bm{V}_{\mathup{glo}}^*$, $\widetilde{\mathbf{v}} \in \widetilde{\bm{V}}$.

By (\ref{continuous inf-sup}) and $\bm{V} = \bm{V}_{\mathup{glo}}^* \oplus \widetilde{\bm{V}}$, we know
$\forall \mathbf{v} \in \bm{V}_{\mathup{glo}} \subset \bm{V}$, there exist $\mathbf{v}^* \in \bm{V}_{\mathup{glo}}^*$, $\widetilde{\mathbf{v}} \in \widetilde{\bm{V}}$ such that
\[
\left| B(\mathbf{v}, \mathbf{v}^* + \widetilde{\mathbf{v}}) \right| \geq \frac{1}{2C_k\max\{\sqrt{k},k\}} \|\mathbf{v}\|_{k, \mathrm{imp}} \cdot \|\mathbf{v}^* + \widetilde{\mathbf{v}}\|_{k, \mathrm{imp}}.
\]
Since $B(\mathbf{v}, \widetilde{\mathbf{v}}) = 0$ for $\widetilde{\mathbf{v}} \in \widetilde{\bm{V}}$ and $\mathbf{v} \in \bm{V}_{\mathup{glo}}$, then

\[
\left| B(\mathbf{v}, \mathbf{v}^*) \right| \geq \frac{1}{2C_k\max\{\sqrt{k},k\}} \|\mathbf{v}\|_{k, \mathrm{imp}} \cdot \|\mathbf{v}^*+ \widetilde{\mathbf{v}}\|_{k, \mathrm{imp}}.
\]
Then we aim to show $\|\mathbf{v}^*\|_{k, \mathrm{imp}} \leq c\|\mathbf{v}^* + \widetilde{\mathbf{v}}\|_{k, \mathrm{imp}}$.
Since $\|\mathbf{v}^*\|_{k, \mathrm{imp}} \leq \|\mathbf{v}^* + \widetilde{\mathbf{v}}\|_{k, \mathrm{imp}} + \|\widetilde{\mathbf{v}}\|_{k, \mathrm{imp}}$, $B( \widetilde{\mathbf{v}},\mathbf{v}^*) = 0$, and
by Lemma \ref{covercity}, we have
\[
\begin{aligned}
\|\widetilde{\mathbf{v}}\|_{k, \mathrm{imp}}^2 & \leq \frac{1}{\alpha} \left| B(\widetilde{\mathbf{v}}, \widetilde{\mathbf{v}}) \right| = \frac{1}{\alpha} \left| B(\widetilde{\mathbf{v}}, \mathbf{v}^* + \widetilde{\mathbf{v}}) \right|  \leq \frac{1}{\alpha} \max \left\{\mu_{\min}^{-1}, 1 \right\} \|\widetilde{\mathbf{v}}\|_{k, \mathrm{imp}} \cdot \|\mathbf{v}^*+\widetilde{\mathbf{v}}\|_{k, \mathrm{imp}}.
\end{aligned}
\]
Thus we have
\[
\|\mathbf{v}^*\|_{k, \mathrm{imp}} \leq \left( 1 + \frac{1}{\alpha}\max \left\{\mu_{\min}^{-1}, 1 \right\} \right) \|\mathbf{v}^* + \widetilde{\mathbf{v}}\|_{k, \mathrm{imp}}.
\]
That is, we obtain that $\left| B(\mathbf{v}, \mathbf{v}^*) \right| \geq \frac{1}{2C_k\max\{\sqrt{k},k\}} \left( 1 + \frac{1}{\alpha}\max \left\{\mu_{\min}^{-1}, 1 \right\} \right)^{-1} \|\mathbf{v}\|_{k, \mathrm{imp}} \cdot \|\mathbf{v}^*\|_{k, \mathrm{imp}}$.
Therefore, we get
\[
\inf_{\mathbf{v} \in \bm{V}_{\mathup{glo}} \setminus \{\mathbf{0}
\}} \sup_{\mathbf{v}^* \in \bm{V}_{\mathup{glo}f}^* \setminus \{\mathbf{0}\}} \frac{\left| B(\mathbf{v}, \mathbf{v}^*) \right|}{\|\mathbf{v}\|_{k, \mathrm{imp}} \cdot \|\mathbf{v}^*\|_{k, \mathrm{imp}}} \geq\beta(k, \mu_{\mathup{min}})  > 0,
\]
where $\beta(k, \mu_{\mathup{min}}) = \frac{1}{2C_k\max\{\sqrt{k},k\}} \left( 1 + \frac{1}{\alpha}\max \left\{\mu_{\min}^{-1}, 1 \right\} \right)^{-1} $.
The well-posedness of the global problem has been proved.
\end{proof}
\begin{theorem}
\label{global convergence}
Under Assumption \ref{resolution condition}, let $\mathbf{u}_{\mathup{glo}}\in\bm{V}_{\mathup{glo}}$ be the solution of (\ref{global problem}) and $\mathbf{u}\in\bm{V}$ be the solution of (\ref{weak form}). Then
\[
\left\| \mathbf{u}_{\mathup{glo}} - \mathbf{u} \right\|_a \leq \frac{1}{\alpha \sqrt{\Lambda}} \|\mathbf{f}\|_{s^{-1}},
\]
where $\alpha > 0$ is a constant independent of $k$, $H$, $\mu_r$, $\Lambda$ from Lemma \ref{covercity}.
\end{theorem}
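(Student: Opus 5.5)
The plan is the standard CEM argument: show that the error $\mathbf{e}:=\mathbf{u}-\mathbf{u}_{\mathup{glo}}$ lies in $\widetilde{\bm{V}}$, then combine the coercivity of Lemma \ref{covercity} on $\widetilde{\bm{V}}$ with the spectral estimate of Lemma \ref{property of pi}.

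\textbf{Step 1: identify the Galerkin solution.} I take $\mathbf{g}=0$, consistent with (\ref{global problem}) and (\ref{multiscale discretization}), and read $\|\mathbf{f}\|_{s^{-1}}^2=(\mu_r H^2\mathbf{f},\mathbf{f})$. By the proof of Theorem \ref{global well-posedness}, $\bm{V}=\bm{V}_{\mathup{glo}}\oplus\widetilde{\bm{V}}$. Hence I can write $\mathbf{u}=\mathbf{w}+\widetilde{\mathbf{w}}$ with $\mathbf{w}\in\bm{V}_{\mathup{glo}}$ and $\widetilde{\mathbf{w}}\in\widetilde{\bm{V}}$. For any $\mathbf{v}^*\in\bm{V}_{\mathup{glo}}^*$ we have $B(\widetilde{\mathbf{w}},\mathbf{v}^*)=0$, which follows from (\ref{variational form for global ms dual}) because $\pi\widetilde{\mathbf{w}}=0$. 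Therefore $B(\mathbf{w},\mathbf{v}^*)=B(\mathbf{u},\mathbf{v}^*)=(\mathbf{f},\mathbf{v}^*)$. Uniqueness from the inf-sup condition in Theorem \ref{global well-posedness} then gives $\mathbf{w}=\mathbf{u}_{\mathup{glo}}$, so $\mathbf{e}=\widetilde{\mathbf{w}}\in\widetilde{\bm{V}}$ and $\pi\mathbf{e}=0$.

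\textbf{Step 2: coercivity.} Lemma \ref{covercity} gives $\alpha\|\mathbf{e}\|_a^2\le|B(\mathbf{e},\mathbf{e})|$. Next I use Galerkin orthogonality in the form $B(\mathbf{u}_{\mathup{glo}},\mathbf{e})=0$. This holds because $\mathbf{u}_{\mathup{glo}}\in\bm{V}_{\mathup{glo}}$, $\mathbf{e}\in\widetilde{\bm{V}}$, and (\ref{variational form for global ms}) yields $B(T_i\bm{\phi}_j^i,\widetilde{\mathbf{v}})=0$. Consequently
\[
B(\mathbf{e},\mathbf{e})=B(\mathbf{u},\mathbf{e})=(\mathbf{f},\mathbf{e}).
\]

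\textbf{Step 3: bound the right-hand side.} Since $\pi\mathbf{e}=0$, Cauchy--Schwarz in the weighted inner product gives
\[
|(\mathbf{f},\mathbf{e})|\le\|\mathbf{f}\|_{s^{-1}}\|\mathbf{e}\|_s=\|\mathbf{f}\|_{s^{-1}}\|\mathbf{e}-\pi\mathbf{e}\|_s .
\]
Summing Lemma \ref{property of pi} over the coarse elements gives $\|\mathbf{e}-\pi\mathbf{e}\|_s\le\Lambda^{-1/2}\|\mathbf{e}\|_a$. Combining, $\alpha\|\mathbf{e}\|_a^2\le\Lambda^{-1/2}\|\mathbf{f}\|_{s^{-1}}\|\mathbf{e}\|_a$, and dividing by $\|\mathbf{e}\|_a$ yields the stated bound.

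\textbf{Main obstacle.} The delicate point is Step 1, and in particular the use of the correct orthogonality in each argument of the non-Hermitian form $B$. Step 1 needs $B(\widetilde{\bm{V}},\bm{V}_{\mathup{glo}}^*)=0$, while Step 2 needs $B(\bm{V}_{\mathup{glo}},\widetilde{\bm{V}})=0$. Both follow directly from the defining problems (\ref{variational form for global ms}) and (\ref{variational form for global ms dual}) with $\pi\widetilde{\mathbf{v}}=0$. The decomposition $\bm{V}=\bm{V}_{\mathup{glo}}\oplus\widetilde{\bm{V}}$ is taken from the proof of Theorem \ref{global well-posedness}. If that decomposition needed extra justification, I would obtain it directly: for $\mathbf{u}$, set $\mathbf{w}=T(\ldots)$ so that $\pi\mathbf{w}=\pi\mathbf{u}$, using the coercivity in Lemma \ref{covercity} to solve the corresponding projection problem.
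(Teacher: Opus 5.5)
Your proof is correct and follows the same route as the paper. Galerkin orthogonality gives $\mathbf{u}-\mathbf{u}_{\mathrm{glo}}\in\widetilde{\bm{V}}$; you then combine the coercivity of Lemma~\ref{covercity} on $\widetilde{\bm{V}}$ with $B(\mathbf{u}_{\mathrm{glo}},\mathbf{u}-\mathbf{u}_{\mathrm{glo}})=0$, the weighted Cauchy--Schwarz bound $|(\mathbf{f},\mathbf{e})|\le\|\mathbf{f}\|_{s^{-1}}\|\mathbf{e}\|_s$, and Lemma~\ref{property of pi} with $\pi\mathbf{e}=0$. Your Step~1 (splitting $\mathbf{u}$ along $\bm{V}=\bm{V}_{\mathrm{glo}}\oplus\widetilde{\bm{V}}$ and using the inf-sup uniqueness of Theorem~\ref{global well-posedness}) only makes explicit what the paper asserts as ``the orthogonality property''; you are also right that the argument tacitly requires $\mathbf{g}=0$.
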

\begin{proof}
By (\ref{weak form}) and (\ref{global problem}), we directly have
\[
B(\mathbf{u} - \mathbf{u}_{\mathup{glo}}, \mathbf{v}) = 0 \quad \forall \mathbf{v} \in \bm{V}_{\mathup{glo}}^*.\]
In terms of the orthogonality property, we know $\mathbf{u} - \mathbf{u}_{\mathup{glo}} \in \widetilde{\bm{V}}$, i.e. $\pi(\mathbf{u} - \mathbf{u}_{\mathup{glo}}) = 0$ by definition. By the orthogonality $B(\mathbf{u}_{\mathup{glo}}, \mathbf{u} - \mathbf{u}_{\mathup{glo}})=0$, and Lemmas \ref{property of pi}, \ref{covercity}, we obtain
\[
\begin{aligned}
\left\| \mathbf{u} - \mathbf{u}_{\mathup{glo}} \right\|_a^2 & \leq \frac{1}{\alpha} \left| B(\mathbf{u} - \mathbf{u}_{\mathup{glo}}, \mathbf{u} - \mathbf{u}_{\mathup{glo}}) \right|  = \frac{1}{\alpha} \left| (f, \mathbf{u} - \mathbf{u}_{\mathup{glo}}) \right| \\
& \leq \frac{1}{\alpha} \|\mathbf{f}\|_{s^{-1}} \cdot \left\| \mathbf{u} - \mathbf{u}_{\mathup{glo}} \right\|_s  \leq \frac{1}{\alpha} \|\mathbf{f}\|_{s^{-1}} \cdot \frac{1}{\sqrt{\Lambda}} \left\| \mathbf{u} - \mathbf{u}_{\mathup{glo}} \right\|_a,
\end{aligned}
\]
Thus we have $\left\| \mathbf{u} - \mathbf{u}_{\mathup{glo}} \right\|_a \leq \frac{1}{\alpha \sqrt{\Lambda}} \|\mathbf{f}\|_{s^{-1}}.$
\end{proof}

By the preceding theorem, we obtain convergence of the method when global basis functions are employed. We shall now prove that these global basis functions admit a localizable construction. First, we establish a lemma that will later be used to estimate the difference between the global and the multiscale basis functions.
For each coarse block~$K\in\mathcal{T}_H$, we introduce a bubble function~$B$ such that $B(\mathbf{x})>0$ for every $\mathbf{x}$ in the interior of~$K$, and $B(\mathbf{x})=0$ for all~$\mathbf{x}\in\partial K$. In particular, we take $B=\prod_{j}\eta_{j}$, where $\{\eta_{j}\}_j$ is the set of Lagrange basis functions on the coarse element $K$ and the product extends over all vertices~$j$ on the boundary of~$K$. Using this bubble function we define the constant
\[
C_{\pi}= \sup_{K\in\mathcal{T}_{H},\;\mathbf{v}\in \bm{V}_{\mathrm{aux}}}
          \frac{\displaystyle\int_{K}H^{-2}\mu_r^{-1}\,\mathbf{v}\cdot\overline{\mathbf{v}}\,dx}{\displaystyle\int_{K}B(\mathbf{x})\,H^{-2}\mu_r^{-1}\,\mathbf{v}\cdot\overline{\mathbf{v}}\,dx}.
\]

\begin{lemma}
\label{aux_lemma}
For all $\mathbf{v}_{\mathup{aux}} \in \bm{V}_{\mathup{aux}}$, there exists $\mathbf{v} \in \bm{V}$ such that
\[
\pi(\mathbf{v}) = \mathbf{v}_{\mathup{aux}}, \quad \|\mathbf{v}\|_a^2 \leq D \|\mathbf{v}_{\mathup{aux}}\|_s^2, \quad \operatorname{supp}(\mathbf{v}) \subset \operatorname{supp}(\mathbf{v}_{\mathup{aux}}),
\]
where $D$ does not depend on $H,k,\mu_r$, but depends on the eigenvalue $\max_{\substack{1 \leq i \leq N \\ 1 \leq j \leq l_i}} \lambda_j^i$.
\end{lemma}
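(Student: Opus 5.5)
The construction is local and follows the standard CEM-GMsFEM argument built on the bubble function $B$ and the constant $C_\pi$. Since $\pi=\sum_i\pi_i$ and $\bm{V}_{\mathup{aux}}=\bigoplus_i\bm{V}_{\mathup{aux}}^i$, we write $\mathbf{v}_{\mathup{aux}}=\sum_i \mathbf{v}_{\mathup{aux}}^i$ with $\mathbf{v}_{\mathup{aux}}^i\in\bm{V}_{\mathup{aux}}^i$. For each coarse element $K_i$ we construct $\mathbf{v}^i$ supported in $K_i$ with $\pi_i\mathbf{v}^i=\mathbf{v}_{\mathup{aux}}^i$ and $\|\mathbf{v}^i\|_{a_i}^2\le D\|\mathbf{v}_{\mathup{aux}}^i\|_{s_i}^2$, and then set $\mathbf{v}=\sum_i\mathbf{v}^i$. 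Because the supports are disjoint up to boundaries, $\pi\mathbf{v}=\mathbf{v}_{\mathup{aux}}$, the energies add, and $\operatorname{supp}(\mathbf{v})\subset\operatorname{supp}(\mathbf{v}_{\mathup{aux}})$ (we take $\mathbf{v}^i=0$ whenever $\mathbf{v}_{\mathup{aux}}^i=0$). For the global $\bm{V}$-conformity of the sum we need $\mathbf{v}^i\in\bm{V}_0(K_i)$, that is, vanishing tangential trace on $\partial K_i$. This is why the bubble $B$ is used: $B\mathbf{w}$ has zero trace on $\partial K_i$, so the zero-extension lies in $\bm{H}(\mathrm{curl})$, and the impedance boundary term in $\|\cdot\|_{a}$ also vanishes.

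On $K_i$ we seek $\mathbf{v}^i=B\,\mathbf{w}$ with $\mathbf{w}\in\bm{V}_{\mathup{aux}}^i$. The condition $\pi_i(B\mathbf{w})=\mathbf{v}_{\mathup{aux}}^i$ is a finite-dimensional linear system $M c=d$, where $M_{jl}=s_i(B\bm{\phi}^i_l,\bm{\phi}^i_j)$ and $d_j=s_i(\mathbf{v}_{\mathup{aux}}^i,\bm{\phi}^i_j)$. The matrix $M$ is Hermitian positive definite, since $s_i(B\mathbf{w},\mathbf{w})=\int_{K_i}B H^{-2}\mu_r^{-1}|\mathbf{w}|^2>0$. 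More quantitatively, by the definition of $C_\pi$ we have $s_i(B\mathbf{w},\mathbf{w})\ge C_\pi^{-1}\|\mathbf{w}\|_{s_i}^2$. Testing $\pi_i(B\mathbf{w})=\mathbf{v}_{\mathup{aux}}^i$ against $\mathbf{w}$ then gives $C_\pi^{-1}\|\mathbf{w}\|_{s_i}^2\le \|\mathbf{v}_{\mathup{aux}}^i\|_{s_i}\|\mathbf{w}\|_{s_i}$, hence $\|\mathbf{w}\|_{s_i}\le C_\pi\|\mathbf{v}_{\mathup{aux}}^i\|_{s_i}$. This step needs $C_\pi<\infty$, which holds because $\bm{V}_{\mathup{aux}}$ is finite dimensional, though we take $C_\pi$ uniform in $K$ as in its definition.

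Next we bound the energy. The boundary term in $\|B\mathbf{w}\|_{a_i}$ vanishes and $0\le B\le1$, so
\[
\|B\mathbf{w}\|_{a_i}^2\le 2\big(\mu_r^{-1}\operatorname{curl}\mathbf{w},\operatorname{curl}\mathbf{w}\big)_{K_i}+2\big(\mu_r^{-1}|\nabla B|^2\mathbf{w},\mathbf{w}\big)_{K_i}+k^2\|\mathbf{w}\|_{L^2(K_i)}^2 .
\]
This uses $\operatorname{curl}(B\mathbf{w})=B\operatorname{curl}\mathbf{w}+\nabla B\times\mathbf{w}$. The first and last terms are bounded by $2\|\mathbf{w}\|_{a_i}^2\le 2\max_j\lambda^i_j\|\mathbf{w}\|_{s_i}^2$, via the eigen-expansion of $\mathbf{w}$ in the $s_i$-orthonormal eigenfunctions. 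For the middle term, $|\nabla B|\le C H^{-1}$ for products of coarse Lagrange functions on a quasi-uniform mesh, so it is at most $C\|\mathbf{w}\|_{s_i}^2$. Combining these, $\|B\mathbf{w}\|_{a_i}^2\le C(1+\max\lambda^i_j)C_\pi^2\|\mathbf{v}_{\mathup{aux}}^i\|_{s_i}^2$, and the $\mu_r$ weights match exactly, which is why $D$ is contrast independent.

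The main obstacle is the claim that $D$ is independent of $k$ and $\mu_r$. The $k^2$ mass term is fine, since it is absorbed into $\|\mathbf{w}\|_{a_i}$ and thus into $\lambda^i_j$. However, the eigenvalues $\lambda^i_j$ themselves contain $k^2\mu_r H^2$ contributions, so the $k$-dependence is hidden inside $\max\lambda^i_j$, which the statement allows. The remaining delicate point is to justify that $C_\pi$ is bounded independently of $\mu_r$. We would take this as part of the definition, as the paper does, rather than prove it.
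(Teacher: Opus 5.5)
Your proof is correct, but it takes a different route from the paper.

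The paper poses on each $K_i$ the constrained minimization of $a_i(\bm{\psi},\bm{\psi})$ subject to $\pi_i\bm{\psi}=\mathbf{p}$ and rewrites it as a saddle-point problem. It uses $B\mathbf{p}$ only as the witness for the inf-sup condition: $s_i(B\mathbf{p},\mathbf{p})\ge C_\pi^{-1}\|\mathbf{p}\|_{s_i}^2$ and $\|B\mathbf{p}\|_{a_i}^2\lesssim(1+\max_j\lambda_j^i)\|\mathbf{p}\|_{s_i}^2$. The minimizer is then declared to be $\mathbf{v}$.

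You bypass the minimization and exhibit an explicit feasible element $B\mathbf{w}$ with $\mathbf{w}\in\bm{V}_{\mathrm{aux}}^i$, obtained from the Hermitian positive definite system $\pi_i(B\mathbf{w})=\mathbf{v}_{\mathrm{aux}}^i$. The same two bubble estimates then give $\|B\mathbf{w}\|_{a_i}^2\lesssim(1+\max_j\lambda_j^i)\,C_\pi^2\,\|\mathbf{v}_{\mathrm{aux}}^i\|_{s_i}^2$.

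The paper's route aims at the canonical energy-minimal preimage. Yours is an elementary finite-dimensional argument in which conformity, support and the constant are all explicit. It also repairs two loose points in the paper's proof.

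\textbf{Trace condition.} The paper minimizes over $\bm{V}(K_i)$ with no trace condition, so the zero-extension of its minimizer need not lie in $\bm{H}(\mathrm{curl},\Omega)$. One should minimize over fields with vanishing tangential trace on $\partial K_i$, a class that contains your $B\mathbf{w}$.

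\textbf{The constant $D$.} The paper's $D$ is the bound for $\|B\mathbf{p}\|_{a_i}^2$, and it asserts $\mathbf{v}=B\mathbf{p}$. But $B\mathbf{p}$ is never feasible for $\mathbf{p}\neq\mathbf{0}$: since $B<1$, one has $s_i(B\mathbf{p}-\mathbf{p},\mathbf{p})<0$, so $\pi_i(B\mathbf{p})\neq\mathbf{p}$. The minimizer's energy is controlled only through the inf-sup constant, for instance by the energy of your feasible $B\mathbf{w}$. Hence $D$ should carry the factor $C_\pi^2$, as your constant does.

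Your coarse-block reading of the support condition is consistent with how the lemma is used later. So is your treatment of the $k$-dependence through $\max\lambda_j^i$, and of $C_\pi$ as an assumed uniform constant.
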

\begin{proof}
Consider the following minimization problem defined on a coarse block $K_i$:
\begin{equation}
\label{minimization problem}
\mathbf{v} = \operatorname{argmin} \left\{ a(\bm{\psi}, \bm{\psi}) \colon \bm{\psi} \in \bm{V}(K_i), \quad s_i(\bm{\psi}, \mathbf{v}_{\mathup{aux}}) = 1, \quad s_i(\bm{\psi}, \mathbf{w}) = 0 \ \forall \mathbf{w} \in \bm{V}_{\mathup{aux}}^\perp \right\}
\end{equation}
for a given $\mathbf{v}_{\mathup{aux}} \in \bm{V}_{\mathup{aux}}^i$ with $\|\mathbf{v}_{\mathup{aux}}\|_{s_i} = 1$, where $\mathbf{v}_{\mathup{aux}}^\perp \subset \bm{V}_{\mathup{aux}}^i$ is the orthogonal complement of $\operatorname{span}\{\mathbf{v}_{\mathup{aux}}\}$ with respect to the inner product $s_i(\cdot,\cdot)$.
Let $\mathbf{p} \in \bm{V}_{\mathup{aux}}^i$. The minimization problem (\ref{minimization problem}) is equivalent to the following variational problem: Find $\mathbf{v} \in \bm{V}(K_i)$ and $\mathbf{y} \in \bm{V}_{\mathup{aux}}^i$ such that
\begin{subequations}
\label{variational for mini}
\begin{align}
a_i(\mathbf{v}, \mathbf{w}) + s_i(\mathbf{w}, \mathbf{y}) & = 0 \quad \forall \mathbf{w} \in \bm{V}(K_i), \label{variational for mini_a} \\
s_i(\mathbf{v}, \mathbf{z}) & = s_i(\mathbf{p}, \mathbf{z}) \quad \forall \mathbf{z} \in \bm{V}_{\mathup{aux}}^i.\label{variational for mini_b}
\end{align}
\end{subequations}
Note that the well-posedness of the minimization problem (\ref{minimization problem}) is equivalent to the existence of a function $\mathbf{v} \in \bm{V}(K_i)$ such that
\[
s_i(\mathbf{v}, \mathbf{p}) \geq c \|\mathbf{p}\|_{s_i}^2, \quad \|\mathbf{v}\|_{a_i} \leq C \|\mathbf{p}\|_{s_i},
\]
where $C$ is a constant independent of $H,k,\mu_r$. Notice that $\mathbf{p}$ is supported in $K_i$. We let $\mathbf{v} = B(\mathbf{x}) \mathbf{p}$. Then combining the definition of $C_\pi$ and $s$-norm, we have
\[
\begin{aligned}
s_i(\mathbf{v}, \mathbf{p}) = \int_{K_i} \mu_r^{-1} H^{-2} B(\mathbf{x}) \mathbf{p}^2 \, dx  \geq C_\pi^{-1} \|\mathbf{p}\|_{s_i}^2.
\end{aligned}
\]
For $\|\mathbf{v}\|_{a_i}$, we have
\[
\begin{aligned}
\|\mathbf{v}\|_{a_i}^2 = \|B(\mathbf{x}) \mathbf{p}\|_{a_i}^2 = \int_{K_i} \mu_r^{-1} \operatorname{curl}(B \mathbf{p}) \cdot \operatorname{curl}(\overline{B \mathbf{p}})dx + k^2\int_{K_i} B \mathbf{p} \cdot \overline{B \mathbf{p}} \, dx  + k\int_{\partial K_i \cap \partial \Omega} (B \mathbf{p})_T \cdot (\overline{B \mathbf{p}})_T \, ds.
\end{aligned}
\]
Since $\operatorname{curl}(B \mathbf{p}) = (\nabla B) \times \mathbf{p} + B (\operatorname{curl} \mathbf{p})$, then
\[
\begin{aligned}
\left| \operatorname{curl}(B \mathbf{p}) \right|^2 \leq \left[ (\nabla B) \times \mathbf{p} + B (\operatorname{curl} \mathbf{p}) \right]^2 \leq 2 (\nabla B \times \mathbf{p})^2 + 2 B^2 (\operatorname{curl} \mathbf{p})^2.
\end{aligned}
\]
Since for all vectors $\mathbf{w}, \mathbf{v}$, we have $|\mathbf{w} \times \mathbf{v}|^2 = |\mathbf{w}|^2 \cdot |\mathbf{v}|^2 - (\mathbf{w} \cdot \mathbf{v})^2 \leq 2|\mathbf{w}|^2 |\mathbf{v}|^2.$
Then $\left| \operatorname{curl}(B \mathbf{p}) \right|^2 \leq 4|\nabla B|^2 \cdot |\mathbf{p}|^2 + 2|\operatorname{curl} \mathbf{p}|^2$. Note that $|B(\mathbf{x})| \leq 1$, $|\nabla B(\mathbf{x})|^2 \leq 4^2 \sum_j |\nabla \eta_j|^2 \leq c' H^{-2}$ (where $c'>0$ does not depend on $H,k,\mu_r$).
Thus,
\[
\begin{aligned}
&\|\mathbf{v}\|_{a_i}^2 \leq  \int_{K_i} \mu_r^{-1} \left( 2|\operatorname{curl} \mathbf{p}|^2 + 4c'H^{-2}|\mathbf{p}|^2 \right) dx + k^2 \int_{K_i} |\mathbf{p}|^2 dx + k \int_{\partial K_i \cap \partial \Omega} |\mathbf{p}_T|^2 ds \\
& \leq  2\max \left\{2c', 1 \right\} \left( \|\mathbf{p}\|_{s_i}^2 + \|\mathbf{p}\|_{a_i}^2 \right).
\end{aligned}
\]
Because $\|\mathbf{p}\|_{a_i}^2 \leq \left( \max_{1 \leq j \leq l_i} \lambda_j^i \right) \|\mathbf{p}\|_{s_i}^2$, then we have
$
\|\mathbf{v}\|_{a_i}^2 \leq 2\max \left\{2c', 1 \right\} \left( 1 + \max_{1 \leq j \leq l_i} \lambda_j^i \right) \|\mathbf{p}\|_{s_i}^2.
$
That is,
\[
\|\mathbf{v}\|_{a_i} \leq \sqrt{ 2\max \left\{2c', 1 \right\} \left( 1 + \max_{1 \leq j \leq l_i} \lambda_j^i \right) } \|\mathbf{p}\|_{s_i},
\]
which proves the unique solvability of the minimization problem. $(\mathbf{v}, \mathbf{y})$ satisfy (\ref{variational for mini}). From (\ref{variational for mini_b}), we can see that $\pi_i(\mathbf{v}) = \mathbf{p}$. Since $\mathbf{v} = B \mathbf{p}$, it's clear that $\operatorname{supp}(\mathbf{v}) \subset \operatorname{supp}(\mathbf{p})$. By the above estimates, we also have the desired estimate with $D = 2\max \left\{2c', 1 \right\} \left( 1 + \max_{\substack{1 \leq i \leq N \\ 1 \leq j \leq l_i}} \lambda_j^i \right)$. This completes the proof.
\end{proof}
Before estimating the difference between the global and multiscale basis functions, we need some notation and the cutoff function with respect to the oversampling domains. For each $K_i$, we recall that $K_{i,m} \subset \Omega$ is the oversampling coarse region by enlarging $K_i$ by $m$ coarse grid layers. For $M > m$, we define $\chi_i^{M, m} \in \operatorname{span}\left\{\eta_j\right\}$ (recall that $\{\eta_{j}\}_j$ is the set of Lagrange basis functions) such that $0 \leq \chi_i^{M, m} \leq 1$ and
\[
\begin{array}{ll}
\chi_i^{M, m} = 1 & \mathup{in } \quad K_{i, m}, \\
\chi_i^{M, m} = 0 & \mathup{in } \quad \Omega \setminus K_{i, M}.
\end{array}
\]
Note that we have $K_{i, m} \subset K_{i, M}$.

The following theorem shows that our multiscale basis functions have a decay property. In particular, the multiscale basis functions are small outside an oversampled region.

\begin{theorem}\label{decay}
We consider the oversampled domain $K_{i, l}$ with $l \geq 2$. Let $\bm{\phi}_j^i \in \bm{V}_{\mathup{aux}}^i$ be a given auxiliary multiscale basis function. We let $T_{i, l} \bm{\phi}_j^i$ be the multiscale basis functions obtained in (\ref{variational form for ms}) on $K_{i, l}$ and let $T_i \bm{\phi}_j^i$ be the global multiscale functions obtained in (\ref{variational form for global ms}). Then we have
\[
\begin{aligned}
 \left\| T_i \bm{\phi}_j^i - T_{i, l} \bm{\phi}_j^i \right\|_a^2 + \left\| \pi\left( T_i \bm{\phi}_j^i - T_{i, l} \bm{\phi}_j^i \right) \right\|_s^2  \leq \frac{C_*}{\alpha^2} \left( 1 + \frac{1}{\Lambda} \right)\theta^{l-1} \left( \left\| T_i \bm{\phi}_j^i \right\|_a^2 + \left\| \pi\left( T_i \bm{\phi}_j^i \right) \right\|_s^2 \right), 
\end{aligned}
\]
where  $0<\theta=\frac{(1+1/\Lambda)}{(1+1/\Lambda)+\alpha}<1$ and $C_*,\alpha>0$ are independent of $H,k,\mu_r,\Lambda$.
\end{theorem}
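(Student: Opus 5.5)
We follow the standard CEM-GMsFEM localization argument, adapted to the non-Hermitian form $\mathcal{B}(\mathbf{w},\mathbf{v}) := B(\mathbf{w},\mathbf{v}) + s(\pi\mathbf{w},\pi\mathbf{v})$. Lemma \ref{covercity} gives $|\mathcal{B}(\mathbf{v},\mathbf{v})| \geq \alpha \|\mathbf{v}\|_{\mathcal{B}}^2$, where $\|\mathbf{v}\|_{\mathcal{B}}^2 := \|\mathbf{v}\|_a^2 + \|\pi\mathbf{v}\|_s^2$. Boundedness of $\mathcal{B}$ in this norm follows from (\ref{boundedness}) and (\ref{equivalence for two norms}).

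\textbf{Step 1: reduction to a best-approximation error.} Write $\mathbf{e} = T_i\bm{\phi}_j^i - T_{i,l}\bm{\phi}_j^i$. Subtracting (\ref{variational form for ms}) from (\ref{variational form for global ms}) gives the Galerkin orthogonality $\mathcal{B}(\mathbf{e},\mathbf{v}) = 0$ for all $\mathbf{v} \in \bm{V}_0(K_{i,l})$. Coercivity then yields a C\'ea-type bound: for any $\mathbf{w} \in \bm{V}_0(K_{i,l})$,
\[
\alpha\|\mathbf{e}\|_{\mathcal{B}}^2 \leq |\mathcal{B}(\mathbf{e},\mathbf{e})| = |\mathcal{B}(\mathbf{e}, T_i\bm{\phi}_j^i - \mathbf{w})| \leq C\|\mathbf{e}\|_{\mathcal{B}}\|T_i\bm{\phi}_j^i - \mathbf{w}\|_{\mathcal{B}}.
\]
We choose $\mathbf{w} = \chi_i^{l,l-1} T_i\bm{\phi}_j^i$. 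Expanding $\operatorname{curl}(\chi\mathbf{u}) = \nabla\chi \times \mathbf{u} + \chi\operatorname{curl}\mathbf{u}$ with $|\nabla\chi| \lesssim H^{-1}$, and using Lemma \ref{property of pi} to control $H^{-2}\mu_r^{-1}|\mathbf{u}|^2 \leq \|\pi\mathbf{u}\|_s^2 + \Lambda^{-1}\|\mathbf{u}\|_a^2$ locally, we get
\[
\|\mathbf{e}\|_{\mathcal{B}}^2 \lesssim \alpha^{-2}\bigl(1 + \Lambda^{-1}\bigr)\|T_i\bm{\phi}_j^i\|_{\mathcal{B}(\Omega\setminus K_{i,l-1})}^2.
\]
The $\mu_r^{-1}$ weight in $s$ absorbs the weight in the curl term, which is why the bound is contrast-independent.

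\textbf{Step 2: decay of $T_i\bm{\phi}_j^i$ outside $K_{i,k}$.} Set $\mathbf{u} = T_i\bm{\phi}_j^i$. Outside $K_i$, (\ref{variational form for global ms}) says $\mathcal{B}(\mathbf{u},\mathbf{v}) = 0$ for every $\mathbf{v}$ supported away from $K_i$, since $s(\bm{\phi}_j^i,\pi\mathbf{v}) = 0$ there. Take the test function $\mathbf{v} = (1-\chi_i^{k,k-1})\mathbf{u}$. Then
\[
\mathcal{B}(\mathbf{u}, (1-\chi)\mathbf{u}) = \mathcal{B}\bigl((1-\chi)\mathbf{u}, (1-\chi)\mathbf{u}\bigr) + \text{(commutator terms)}.
\]
The commutator terms are supported in the ring $K_{i,k}\setminus K_{i,k-1}$. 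They involve $\nabla\chi$, and, because $\pi$ does not commute with multiplication by $\chi$, also terms like $s(\pi\mathbf{u}, \pi(\chi\mathbf{u}) - \chi\pi\mathbf{u})$. On the left we use coercivity of the modulus. All ring terms are bounded, via the cutoff bound and Lemma \ref{property of pi}, by $C(1+\Lambda^{-1})\|\mathbf{u}\|_{\mathcal{B}(K_{i,k}\setminus K_{i,k-1})}^2$.

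This gives the recursive estimate
\[
\alpha\|\mathbf{u}\|_{\mathcal{B}(\Omega\setminus K_{i,k})}^2 \leq (1+\Lambda^{-1})\bigl(\|\mathbf{u}\|_{\mathcal{B}(\Omega\setminus K_{i,k-1})}^2 - \|\mathbf{u}\|_{\mathcal{B}(\Omega\setminus K_{i,k})}^2\bigr),
\]
that is, $\|\mathbf{u}\|_{\mathcal{B}(\Omega\setminus K_{i,k})}^2 \leq \theta\|\mathbf{u}\|_{\mathcal{B}(\Omega\setminus K_{i,k-1})}^2$ with $\theta = \frac{1+1/\Lambda}{(1+1/\Lambda)+\alpha}$. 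Iterating from $k = l-1$ down to $k = 1$ gives the factor $\theta^{l-1}$ times $\|\mathbf{u}\|_{\mathcal{B}}^2$. Combining this with Step 1 gives the claimed estimate.

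\textbf{Main obstacle.} The hard step is Step 2. Because $B$ is indefinite and non-Hermitian, we cannot use energy monotonicity. We must instead apply Lemma \ref{covercity} to the cut-off function $(1-\chi)\mathbf{u}$ and take moduli, which must be done carefully since the form is complex. The ring terms also need care: the mass term $-k^2(\mathbf{u},\mathbf{v})$, the boundary term $-ik\langle\cdot,\cdot\rangle$, and the mismatch between $\pi$ and multiplication by $\chi$ must all be bounded within the ring. We plan to handle the mass and boundary terms using $k^2\|\cdot\|^2 + k\|\cdot_T\|^2 \leq \|\cdot\|_a^2$ locally. For the $\pi$-mismatch, we note that $\pi$ is elementwise, so $\pi(\chi\mathbf{u}) - \chi\pi\mathbf{u}$ lives only on ring elements. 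Its $s$-norm is bounded by $\|\mathbf{u}\|_s$ there, which Lemma \ref{property of pi} converts into the $(1+\Lambda^{-1})$ factor.
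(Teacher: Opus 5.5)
Your proposal is correct and follows the paper's proof essentially step for step. The structure is the same: the C\'ea-type reduction via Galerkin orthogonality and Lemma~\ref{covercity} with $\mathbf{w}=\chi_i^{l,l-1}T_i\bm{\phi}_j^i$, then the bound on $(1-\chi_i^{l,l-1})T_i\bm{\phi}_j^i$ by the tail outside $K_{i,l-1}$ using Lemma~\ref{property of pi}, then testing (\ref{variational form for global ms}) with $(1-\chi)T_i\bm{\phi}_j^i$, applying coercivity to the cut-off function itself, bounding the ring terms, and iterating to obtain $\theta^{l-1}$.

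One correction: do not obtain boundedness of $B(\cdot,\cdot)+s(\pi\cdot,\pi\cdot)$ in your norm by chaining (\ref{boundedness}) with (\ref{equivalence for two norms}). That route produces a factor $\max\{\mu_{\min}^{-1},1\}\max\{\mu_{\max},1\}$, i.e.\ essentially the contrast, which would break the claimed independence of $\mu_r$. Use instead the weighted Cauchy--Schwarz inequality $|B(\mathbf{w},\mathbf{v})|\le\|\mathbf{w}\|_a\|\mathbf{v}\|_a$. It holds with constant one and is what the paper uses.
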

\begin{proof}
By the definitions of $T_{i, l} \bm{\phi}_j^i$ and $T_i \bm{\phi}_j^i$ in (\ref{variational form for ms})-(\ref{variational form for global ms}), we have
\[
\begin{aligned}
B\left( T_{i, l} \bm{\phi}_j^i, \mathbf{v} \right) + s\left( \pi\left( T_{i, l} \bm{\phi}_j^i \right), \pi \mathbf{v} \right) & = s\left( \bm{\phi}_j^i, \pi \mathbf{v} \right) \quad \forall \mathbf{v} \in \bm{V}_0(K_{i, l}), \\
B\left( T_i \bm{\phi}_j^i, \mathbf{v} \right) + s\left( \pi\left( T_i \bm{\phi}_j^i \right), \pi \mathbf{v} \right) & = s\left( \bm{\phi}_j^i, \pi \mathbf{v} \right) \quad \forall \mathbf{v} \in \bm{V}.
\end{aligned}
\]
Subtracting the above two equations, we have
\[
B\left(T_i \bm{\phi}_j^i - T_{i,l} \bm{\phi}_j^i, \mathbf{v} \right) + s\left(\pi\left(T_i \bm{\phi}_j^i - T_{i,l} \bm{\phi}_j^i\right), \pi \mathbf{v} \right) = 0
\]
for all $\mathbf{v} \in \bm{V}_0(K_{i,l})$. Taking $\mathbf{v} = \mathbf{w} - T_i \bm{\phi}_j^i + T_i \bm{\phi}_j^i - T_{i,l} \bm{\phi}_j^i$
with $\mathbf{w} \in \bm{V}_0(K_{i,l})$, we have
\begin{equation}
\label{taking test function}
\begin{aligned}
&\quad B\left(T_i \bm{\phi}_j^i - T_{i,l} \bm{\phi}_j^i, T_i \bm{\phi}_j^i - T_{i,l} \bm{\phi}_j^i\right) + s\left(\pi\left(T_i \bm{\phi}_j^i - T_{i,l} \bm{\phi}_j^i\right),\right. \left.\pi\left(T_i \bm{\phi}_j^i - T_{i,l} \bm{\phi}_j^i\right)\right)  \\
& + B\left(T_i \bm{\phi}_j^i - T_{i,l} \bm{\phi}_j^i, \mathbf{w} - T_i \bm{\phi}_j^i\right) + s\left(\pi\left(T_i \bm{\phi}_j^i - T_{i,l} \bm{\phi}_j^i\right), \pi\left(\mathbf{w} - T_i \bm{\phi}_j^i\right)\right) = 0.
\end{aligned}
\end{equation}
By Lemma \ref{covercity}, there exists $\alpha > 0$ independent of $\Lambda$, $k$, $\mu_r$, $H$ such that
\begin{equation}
\label{coarcivity for basis}
\begin{aligned}
& B\left(T_i \bm{\phi}_j^i - T_{i,l} \bm{\phi}_j^i, T_i \bm{\phi}_j^i - T_{i,l} \bm{\phi}_j^i\right) + s\left(\pi\left(T_i \bm{\phi}_j^i - T_{i,l} \bm{\phi}_j^i\right), \pi\left(T_i \bm{\phi}_j^i - T_{i,l} \bm{\phi}_j^i\right)\right) \\
& \geq \alpha \left[ \left\| T_i \bm{\phi}_j^i - T_{i,l} \bm{\phi}_j^i \right\|_a^2 + \left\| \pi\left( T_i \bm{\phi}_j^i - T_{i,l} \bm{\phi}_j^i \right) \right\|_s^2 \right].
\end{aligned}
\end{equation}
Combining (\ref{taking test function}) and (\ref{coarcivity for basis}), we have
\[
\begin{aligned}
& \left\| T_i \bm{\phi}_j^i - T_{i,l} \bm{\phi}_j^i \right\|_a^2 + \left\| \pi\left( T_i \bm{\phi}_j^i - T_{i,l} \bm{\phi}_j^i \right) \right\|_s^2 \\
& \leq \frac{1}{\alpha} \left[ \left| B\left( T_i \bm{\phi}_j^i - T_{i,l} \bm{\phi}_j^i, \mathbf{w} - T_i \bm{\phi}_j^i \right) \right| + \left| s\left( \pi\left( T_i \bm{\phi}_j^i - T_{i,l} \bm{\phi}_j^i \right), \pi\left( \mathbf{w} - T_i \bm{\phi}_j^i \right) \right) \right| \right] \\
& \leq \frac{1}{\alpha} \left[ \left\| T_i \bm{\phi}_j^i - T_{i,l} \bm{\phi}_j^i \right\|_a \cdot \left\| \mathbf{w} - T_i \bm{\phi}_j^i \right\|_a + \left\| \pi\left( T_i \bm{\phi}_j^i - T_{i,l} \bm{\phi}_j^i \right) \right\|_s \cdot \left\| \pi\left( \mathbf{w} - T_i \bm{\phi}_j^i \right) \right\|_s \right] \\
& \leq \frac{ \left\| T_i \bm{\phi}_j^i - T_{i,l} \bm{\phi}_j^i \right\|_a^2 + \alpha^{-2} \left\| \mathbf{w} - T_i \bm{\phi}_j^i \right\|_a^2 }{2} + \frac{ \left\| \pi\left( T_i \bm{\phi}_j^i - T_{i,l} \bm{\phi}_j^i \right) \right\|_s^2 + \alpha^{-2} \left\| \pi\left( \mathbf{w} - T_i \bm{\phi}_j^i \right) \right\|_s^2 }{2}.
\end{aligned}
\]
That is,
\[
\begin{aligned}
& \left\| T_i \bm{\phi}_j^i - T_{i,l} \bm{\phi}_j^i \right\|_a^2 + \left\| \pi\left( T_i \bm{\phi}_j^i - T_{i,l} \bm{\phi}_j^i \right) \right\|_s^2  \leq \alpha^{-2} \left[ \left\| \mathbf{w} - T_i \bm{\phi}_j^i \right\|_a^2 + \left\| \pi\left( \mathbf{w} - T_i \bm{\phi}_j^i \right) \right\|_s^2 \right] \quad \forall \mathbf{w} \in \bm{V}(K_{i,l}).
\end{aligned}
\]
Letting $\mathbf{w} = \chi_i^{l,l-1} \left( T_i \bm{\phi}_j^i \right)$ in above inequality, we have
\begin{equation}
\label{leading inequality}
\begin{aligned}
\left\| T_i \bm{\phi}_j^i - T_{i,l} \bm{\phi}_j^i \right\|_a^2 + \left\| \pi\left( T_i \bm{\phi}_j^i - T_{i,l} \bm{\phi}_j^i \right) \right\|_s^2  \leq \alpha^{-2} \left[ \left\| \left( \chi_i^{l,l-1} - 1 \right) T_i \bm{\phi}_j^i \right\|_a^2 + \right.  \left. \left\| \pi\big( ( \chi_i^{l,l-1} - 1 ) T_i \bm{\phi}_j^i \big) \right\|_s^2 \right]. 
\end{aligned}
\end{equation}
Next we will estimate the two terms on the right-hand side of (\ref{leading inequality}). We divide it into 4 steps.

\textbf{Step 1:} 
We will estimate $\left\|\left(\chi_i^{l, l-1}-1\right) T_i \bm{\phi}_j^i\right\|_a^2$ in (\ref{leading inequality}). By the definition of the norm $\|\cdot\|_a$ and the fact that $\operatorname{supp}\left(1-\chi_i^{l, l-1}\right) \subset \Omega \setminus K_{i, l-1}$, we have
\begin{equation}
\label{estimate_for_first_term}
\begin{aligned}
\left\|\left(1-\chi_i^{l, l-1}\right) T_i \bm{\phi}_j^i\right\|_a^2 = & \int_{\Omega \setminus K_{i, l-1}} \mu_r^{-1} \left| \operatorname{curl} \big( (1-\chi_i^{l, l-1}) T_i \bm{\phi}_j^i \big) \right|^2 dx  + k^2 \int_{\Omega \setminus K_{i, l-1}} \left| \left(1-\chi_i^{l, l-1}\right) T_i \bm{\phi}_j^i \right|^2 dx \\
& + k \int_{\partial \Omega} \left| \left( \left(1-\chi_i^{l, l-1}\right) T_i \bm{\phi}_j^i \right)_T \right|^2 ds \\
\leq & \int_{\Omega \setminus K_{i, l-1}} \mu_r^{-1} \left| \nabla\left(1-\chi_i^{l, l-1}\right) \times T_i \bm{\phi}_j^i + \left(1-\chi_i^{l, l-1}\right) \operatorname{curl} (T_i \bm{\phi}_j^i) \right|^2 dx \\
& + k^2 \int_{\Omega \setminus K_{i, l-1}} \left| T_i \bm{\phi}_j^i \right|^2 dx + k \int_{\partial \Omega}  \left| \left( T_i \bm{\phi}_j^i \right)_T \right|^2 ds \\
\leq & \int_{\Omega \setminus K_{i, l-1}} \mu_r^{-1} \cdot 2 \left[ \left| \left( \nabla\chi_i^{l, l-1} \right) \times T_i \bm{\phi}_j^i \right|^2 + \left| \left(1-\chi_i^{l, l-1}\right) \operatorname{curl} (T_i \bm{\phi}_j^i) \right|^2 \right] dx \\
& + k^2 \int_{\Omega \setminus K_{i, l-1}} \left| T_i \bm{\phi}_j^i \right|^2 dx + k \int_{\partial \Omega} \left| \left( T_i \bm{\phi}_j^i \right)_T \right|^2 ds \\
\leq & \int_{\Omega \setminus K_{i, l-1}} \mu_r^{-1} \cdot 2 \left[ 2 \left| \nabla\chi_i^{l, l-1} \right|^2 \cdot \left| T_i \bm{\phi}_j^i \right|^2 + \left| \operatorname{curl} (T_i \bm{\phi}_j^i) \right|^2 \right] dx \\
& + k^2 \int_{\Omega \setminus K_{i, l-1}} \left| T_i \bm{\phi}_j^i \right|^2 dx + k \int_{\partial \Omega} \left| \left( T_i \bm{\phi}_j^i \right)_T \right|^2 ds \\
\leq & C_* \left( \left\| T_i \bm{\phi}_j^i \right\|_{s(\Omega \setminus K_{i, l-1})}^2 + \left\| T_i \bm{\phi}_j^i \right\|_{a(\Omega \setminus K_{i, l-1})}^2 \right)
\end{aligned}
\end{equation}
since $\left|\nabla\chi_i^{l, l-1}\right |^2 \sim O(H^{-2})$, where $C_*>0$ is independent of $H,k,\mu_r,\Lambda$. 
Note that for each $K_i$ ($1 \leq i \leq N$), we have
\begin{equation}
\label{T_i s_i inequality}
\begin{aligned}
\left\| T_i \bm{\phi}_j^i \right\|_{s_i}^2 & = \left\| (I-\pi) T_i \bm{\phi}_j^i + \pi\left( T_i \bm{\phi}_j^i \right) \right\|_{s_i}^2 = \left\| (I-\pi) T_i \bm{\phi}_j^i \right\|_{s_i}^2 + \left\| \pi\left( T_i \bm{\phi}_j^i \right) \right\|_{s_i}^2 \\
& \leq \frac{1}{\Lambda} \left\| T_i \bm{\phi}_j^i \right\|_{a_i}^2 + \left\| \pi\left( T_i \bm{\phi}_j^i \right) \right\|_{s_i}^2.
\end{aligned}
\end{equation}
Summing (\ref{T_i s_i inequality}) over all $K_i \subset \Omega \setminus K_{i, l-1}$ and combining (\ref{estimate_for_first_term}),
\begin{equation}
\label{step_1_estimate}
\left\| \left(1-\chi_i^{l, l-1}\right) T_i \bm{\phi}_j^i \right\|_a^2 \leq C_* \left(1+\frac{1}{\Lambda}\right) \left[ \left\| T_i \bm{\phi}_j^i \right\|_{a(\Omega \setminus K_{i, l-1})}^2 + \left\| \pi\left( T_i \bm{\phi}_j^i \right) \right\|_{s(\Omega \setminus K_{i, l-1})}^2 \right].
\end{equation}

\textbf{Step 2:} We will estimate the second term on the right-hand side of (\ref{leading inequality}). By using (\ref{T_i s_i inequality}), we have
\begin{equation}
\label{second term in the leading inequality}
\begin{aligned}
\left\| \pi\big( (1 - \chi_i^{l,l-1}) T_i \bm{\phi}_j^i \big) \right\|_s^2 & \leq \left\| \left(1 - \chi_i^{l,l-1}\right) T_i \bm{\phi}_j^i \right\|_s^2 \leq \left\| T_i \bm{\phi}_j^i \right\|_{s(\Omega \setminus K_{i,l-1})}^2 \\
& \leq \frac{1}{\Lambda} \left\| T_i \bm{\phi}_j^i \right\|_{a(\Omega \setminus K_{i,l-1})}^2 + \left\| \pi\left( T_i \bm{\phi}_j^i \right) \right\|_{s(\Omega \setminus K_{i,l-1})}^2.
\end{aligned}
\end{equation}
By combining (\ref{leading inequality}), (\ref{step_1_estimate}), and (\ref{second term in the leading inequality}), we have
\begin{equation*}
\begin{aligned}
 \left\| T_i \bm{\phi}_j^i - T_{i,l} \bm{\phi}_j^i \right\|_a^2 + \left\| \pi\left( T_i \bm{\phi}_j^i - T_{i,l} \bm{\phi}_j^i \right) \right\|_s^2 
\leq  \frac{C_*}{\alpha^2}\left(1 + \frac{1}{\Lambda}\right) \left[ \left\| T_i \bm{\phi}_j^i \right\|_{a(\Omega \setminus K_{i,l-1})}^2 + \left\| \pi\left( T_i \bm{\phi}_j^i \right) \right\|_{s(\Omega \setminus K_{i,l-1})}^2 \right].
\end{aligned}
\end{equation*}

\textbf{Step 3:} In this step, we estimate
$\left\| T_i \bm{\phi}_j^i \right\|_{a(\Omega \setminus K_{i,l-1})}^2 + \left\| \pi\left( T_i \bm{\phi}_j^i \right) \right\|_{s(\Omega \setminus K_{i,l-1})}^2.$
Let $\mathbf{v} = \left(1 - \chi_i^{l-1,l-2}\right) T_i \bm{\phi}_j^i$ in (\ref{variational form for global ms}), we have
\begin{equation}
\label{specific test function}
\begin{aligned}
& B\left( T_i \bm{\phi}_j^i, \left(1 - \chi_i^{l-1,l-2}\right) T_i \bm{\phi}_j^i \right) + s( \pi( T_i \bm{\phi}_j^i ), \pi( (1 - \chi_i^{l-1,l-2}) T_i \bm{\phi}_j^i ) ) \\
= & s( \bm{\phi}_j^i, \pi( (1 - \chi_i^{l-1,l-2}) T_i \bm{\phi}_j^i ) ) = 0
\end{aligned}
\end{equation}
since $\operatorname{supp}\left(1 - \chi_i^{l-1,l-2}\right) \subset \Omega \setminus K_{i,l-2}$ and $\operatorname{supp}\left(\bm{\phi}_j^i\right) \subset K_i$.

Notice that by Lemma \ref{covercity} and (\ref{specific test function}),
\[
\begin{aligned}
&\quad \left\| T_i \bm{\phi}_j^i \right\|_{a(\Omega \setminus K_{i,l-1})}^2 + \left\| \pi\left( T_i \bm{\phi}_j^i \right) \right\|_{s(\Omega \setminus K_{i,l-1})}^2  \leq \left\| \left(1 - \chi_i^{l-1,l-2}\right) T_i \bm{\phi}_j^i \right\|_a^2 + \left\| \pi\left( \left(1 - \chi_i^{l-1,l-2}\right) T_i \bm{\phi}_j^i \right) \right\|_s^2 \\
& \leq \alpha^{-1} \left| B\big((1 - \chi_i^{l-1,l-2}) T_i \bm{\phi}_j^i, (1 - \chi_i^{l-1,l-2}) T_i \bm{\phi}_j^i \big) \right. + \left. s\big( \pi((1 - \chi_i^{l-1,l-2}) T_i \bm{\phi}_j^i ), \pi( (1 - \chi_i^{l-1,l-2}) T_i \bm{\phi}_j^i ) \big) \right| \\
& \leq \alpha^{-1} \left| B\big( \chi_i^{l-1,l-2} T_i \bm{\phi}_j^i, (1 - \chi_i^{l-1,l-2}) T_i \bm{\phi}_j^i \big) \right. + \left. s\big( \pi( \chi_i^{l-1,l-2} T_i \bm{\phi}_j^i ), \pi( (1 - \chi_i^{l-1,l-2}) T_i \bm{\phi}_j^i)\big) \right|.
\end{aligned}
\]
Since $\operatorname{supp}\left(\chi_i^{l-1,l-2}\right) \subset K_{i,l-1}$ and $\operatorname{supp}\left(1-\chi_i^{l-1,l-2}\right) \subset \Omega \setminus K_{i,l-2}$, we know the two terms on the right-hand side of above inequality are nonzero in $K_{i,l-1} \setminus K_{i,l-2}$. 
Then combining the boundedness of $B(\cdot,\cdot)$ and (\ref{T_i s_i inequality}), we obtain that
\begin{equation*}
\begin{aligned}
&\quad \left\| T_i \bm{\phi}_j^i \right\|_{a(\Omega \setminus K_{i,l-1})}^2 + \left\| \pi\left( T_i \bm{\phi}_j^i \right) \right\|_{s(\Omega \setminus K_{i,l-1})}^2 \leq \alpha^{-1} \left[ \left\| T_i \bm{\phi}_j^i \right\|_{a(K_{i,l-1} \setminus K_{i,l-2})}^2 + \left\| T_i \bm{\phi}_j^i \right\|_{s(K_{i,l-1} \setminus K_{i,l-2})}^2 \right] \\
& \leq \alpha^{-1} (\frac{1}{\Lambda}+1)\left[ \left\| T_i \bm{\phi}_j^i \right\|_{a(K_{i,l-1} \setminus K_{i,l-2})}^2 + \left\| \pi\left( T_i \bm{\phi}_j^i \right) \right\|_{s(K_{i,l-1} \setminus K_{i,l-2})}^2 \right]. 
\end{aligned}
\end{equation*}

\textbf{Step 4:}
In this step, we will show that $\left\| T_i \bm{\phi}_j^i \right\|_{a(\Omega \setminus K_{i,l-1})}^2 + \left\| \pi\left( T_i \bm{\phi}_j^i \right) \right\|_{s(\Omega \setminus K_{i,l-1})}^2$ can be estimated by $\left\| T_i \bm{\phi}_j^i \right\|_{a(\Omega \setminus K_{i,l-2})}^2 + \left\| \pi\left( T_i \bm{\phi}_j^i \right) \right\|_{s(\Omega \setminus K_{i,l-2})}^2$. Based on Step 3, we have
\[
\begin{aligned}
& \quad \left\| T_i \bm{\phi}_j^i \right\|_{a(\Omega \setminus K_{i,l-2})}^2 + \left\| \pi\left( T_i \bm{\phi}_j^i \right) \right\|_{s(\Omega \setminus K_{i,l-2})}^2 \\
& = \left\| T_i \bm{\phi}_j^i \right\|_{a(\Omega \setminus K_{i,l-1})}^2 + \left\| \pi\left( T_i \bm{\phi}_j^i \right) \right\|_{s(\Omega \setminus K_{i,l-1})}^2 + \left\| T_i \bm{\phi}_j^i \right\|_{a(K_{i,l-1} \setminus K_{i,l-2})}^2 + \left\| \pi\left( T_i \bm{\phi}_j^i \right) \right\|_{s(K_{i,l-1} \setminus K_{i,l-2})}^2 \\
& \geq \left( 1 + \alpha \left( 1 + \frac{1}{\Lambda} \right)^{-1} \right) \left[ \left\| T_i \bm{\phi}_j^i \right\|_{a(\Omega \setminus K_{i,l-1})}^2 + \left\| \pi\left( T_i \bm{\phi}_j^i \right) \right\|_{s(\Omega \setminus K_{i,l-1})}^2 \right].
\end{aligned}
\]
Utilizing the above inequality recursively, we have
\[
\begin{aligned}
\left\| T_i \bm{\phi}_j^i \right\|_{a(\Omega \setminus K_{i,l-1})}^2 + \left\| \pi\left( T_i \bm{\phi}_j^i \right) \right\|^2_{s(\Omega \setminus K_{i,l-1})} \leq \left( 1 + \alpha \left( 1 + \frac{1}{\Lambda} \right)^{-1} \right)^{1-l} \left[ \left\| T_i \bm{\phi}_j^i \right\|_a^2 + \left\| \pi\left( T_i \bm{\phi}_j^i \right) \right\|_s^2 \right].
\end{aligned}
\]

Combining all steps above, we get
\[
\begin{aligned}
\left\| T_i \bm{\phi}_j^i - T_{i,l} \bm{\phi}_j^i \right\|_a^2 + \left\| \pi\left( T_i \bm{\phi}_j^i - T_{i,l} \bm{\phi}_j^i \right) \right\|_s^2 \leq \frac{C_*}{\alpha^2} \left( 1 + \frac{1}{\Lambda} \right) \left( 1 + \alpha \left( 1 + \frac{1}{\Lambda} \right)^{-1} \right)^{1-l} \left[ \left\| T_i \bm{\phi}_j^i \right\|_a^2 + \left\| \pi T_i \bm{\phi}_j^i \right\|_s^2 \right],
\end{aligned}
\]
where $C_*>0$ is independent of $H,k,\mu_r,\Lambda$.
Denote $0<\theta=\frac{(1+1/\Lambda)}{(1+1/\Lambda)+\alpha}<1$, then we obtain the desired result.
\end{proof}
Next we consider the inf-sup stability for the multiscale solution.

\begin{theorem}
\label{ms stability}
Under Assumption \ref{resolution condition}, the bilinear form $B$ satisfies the following inf-sup condition: there exists $\gamma(k, \mu_{\mathup{min}}) > 0$ such that
\[
\inf_{\mathbf{u}_H \in \bm{V}_{\mathup{ms}} \setminus \{\mathbf{0}\}} \sup_{\mathbf{u}_H^* \in \bm{V}_{\mathup{ms}}^* \setminus \{\mathbf{0}\}} \frac{\left| B(\mathbf{u}_H, \mathbf{u}_H^*) \right|}{\| \mathbf{u}_H \|_{k,\mathrm{imp}} \cdot \| \mathbf{u}_H^* \|_{k,\mathrm{imp}}} \geq \gamma(k, \mu_{\mathup{min}}) > 0,
\]
where $\gamma(k, \mu_{\mathup{min}})=\frac{1}{8C_k\max\{\sqrt{k},k\}} \left( 1 + \frac{1}{\alpha}\max \left\{\mu_{\min}^{-1}, 1 \right\} \right)^{-1}$.
\end{theorem}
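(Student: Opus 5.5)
Our plan is a perturbation argument: we transfer the global inf-sup condition of Theorem \ref{global well-posedness} to the localized spaces using the decay estimate of Theorem \ref{decay}. The target constant $\gamma=\beta(k,\mu_{\min})/4$ indicates that the localization error should consume at most a fixed fraction of $\beta$. This is possible once the oversampling parameter $m$ is large enough that the factor $\theta^{m-1}$ makes the perturbation small. We therefore read the statement as holding for $m$ sufficiently large, in the sense quantified below.

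\textbf{Step 1: lifting to global functions.} Take $\mathbf{u}_H=\sum_{i,j}c_j^iT_{i,m}\bm{\phi}_j^i\in\bm{V}_{\mathup{ms}}$, and set $\mathbf{v}_{\mathup{aux}}=\sum c_j^i\bm{\phi}_j^i$ and $\mathbf{u}_{\mathup{glo}}=T\mathbf{v}_{\mathup{aux}}\in\bm{V}_{\mathup{glo}}$. By Theorem \ref{global well-posedness} there is $\mathbf{v}^*\in\bm{V}_{\mathup{glo}}^*$, say $\mathbf{v}^*=T^*\mathbf{w}_{\mathup{aux}}$, such that
\[
|B(\mathbf{u}_{\mathup{glo}},\mathbf{v}^*)|\ge\beta\,\|\mathbf{u}_{\mathup{glo}}\|_{k,\mathrm{imp}}\|\mathbf{v}^*\|_{k,\mathrm{imp}}.
\]
We then choose the test function $\mathbf{u}_H^*=T_m^*\mathbf{w}_{\mathup{aux}}\in\bm{V}_{\mathup{ms}}^*$.

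\textbf{Step 2: global bound on the localization error.} We need
\[
\|(T-T_m)\mathbf{v}_{\mathup{aux}}\|_a\le\epsilon_m\|T\mathbf{v}_{\mathup{aux}}\|_a,
\]
and the analogous bound for $T^*$ (which follows by conjugation, since $T^*_{i,m}\bm{\phi}=\overline{T_{i,m}\bm{\phi}}$). Theorem \ref{decay} gives this only for a single basis function. Summing over $i$ requires the standard CEM-type argument.
\begin{itemize}
\item For the error $\mathbf{z}=(T-T_m)\mathbf{v}_{\mathup{aux}}$, use the coercivity of Lemma \ref{covercity} and test with $\mathbf{z}$ split as $\sum_i \chi_i^{m+1,m}\mathbf{z}$-type pieces.
\item Each local error $(T_i-T_{i,m})$ is Galerkin-orthogonal to $\bm{V}_0(K_{i,m})$. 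This bounds $\|\mathbf{z}\|_a^2$ by $C(m+1)^3\sum_i\|(T_i-T_{i,m})c^i\bm{\phi}^i\|_a^2$, since the overlap count of the sets $K_{i,m+1}$ is $O(m^3)$.
\item Apply Theorem \ref{decay} to each term, then bound $\|T_i c^i\bm{\phi}^i\|_a^2+\|\pi T_i c^i\bm{\phi}^i\|_s^2\lesssim\alpha^{-2}\|c^i\bm{\phi}^i\|_s^2$ using coercivity and the defining equation (\ref{variational form for global ms}).
\item Relate $\sum_i\|c^i\bm{\phi}^i\|_s^2=\|\mathbf{v}_{\mathup{aux}}\|_s^2$ back to $\|T\mathbf{v}_{\mathup{aux}}\|_a$. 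For this, use Lemma \ref{aux_lemma}: pick $\mathbf{v}$ with $\pi\mathbf{v}=\mathbf{v}_{\mathup{aux}}$ and insert it into (\ref{variational form for global ms}), which gives $\|\mathbf{v}_{\mathup{aux}}\|_s^2\lesssim D^{1/2}(\|T\mathbf{v}_{\mathup{aux}}\|_a+\|\pi T\mathbf{v}_{\mathup{aux}}\|_s)\|\mathbf{v}_{\mathup{aux}}\|_s$.
\end{itemize}
The outcome is $\epsilon_m^2\lesssim C(\alpha,\Lambda,D)\,m^3\theta^{m}$, which can be made small.

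\textbf{Step 3: perturbation estimate.} Expanding,
\[
B(\mathbf{u}_H,\mathbf{u}_H^*)=B(\mathbf{u}_{\mathup{glo}},\mathbf{v}^*)-B(\mathbf{u}_{\mathup{glo}}-\mathbf{u}_H,\mathbf{v}^*)-B(\mathbf{u}_H,\mathbf{v}^*-\mathbf{u}_H^*).
\]
Bound the two error terms with (\ref{boundedness}), the norm equivalence (\ref{equivalence for two norms}), and the $\epsilon_m$ bound. This gives
\[
|B(\mathbf{u}_H,\mathbf{u}_H^*)|\ge(\beta-C\epsilon_m)\|\mathbf{u}_{\mathup{glo}}\|\,\|\mathbf{v}^*\|.
\]
Also $\|\mathbf{u}_H\|\le(1+C\epsilon_m)\|\mathbf{u}_{\mathup{glo}}\|$ and likewise for the test function. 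Choosing $m$ so that $C\epsilon_m\le\beta/2$ and $1+C\epsilon_m\le\sqrt2$ yields $|B|\ge\frac{\beta}{4}\|\mathbf{u}_H\|\|\mathbf{u}_H^*\|$, which is the stated $\gamma$.

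\textbf{Main obstacle.} The hardest part is Step 2: the passage from the single-basis decay estimate to a global estimate on the sum, and the lower bound of $\|T\mathbf{v}_{\mathup{aux}}\|_a+\|\pi T\mathbf{v}_{\mathup{aux}}\|_s$ by $\|\mathbf{v}_{\mathup{aux}}\|_s$. We would first try Lemma \ref{aux_lemma} combined with the coercivity of Lemma \ref{covercity}. The condition on $m$, roughly $m=O(\log(1/\beta))$, is what must be assumed implicitly for the stated result to hold.
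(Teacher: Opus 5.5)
Your architecture is the paper's: you lift $\mathbf u_H=T_m\mathbf v_{\mathrm{aux}}$ to $T\mathbf v_{\mathrm{aux}}$, take the partner $T^*\mathbf w_{\mathrm{aux}}$ from Theorem~\ref{global well-posedness}, test with $T_m^*\mathbf w_{\mathrm{aux}}$, and absorb the two localization errors for large $m$ to reach $\beta/4$. Your Step~3 is essentially the paper's computation. The paper only asserts the Step~2 bound, as (\ref{controlled by T term}) with an unspecified constant, so your Step~2 is where the real work lies, and it has two defects.

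The first defect is the orthogonality you invoke. Galerkin orthogonality of $\mathbf z_i=(T_i-T_{i,m})\mathbf v_i$ to $\bm V_0(K_{i,m})$ removes only the near-field piece $\chi_i^{m,m-1}\mathbf z$ and leaves the global piece $(1-\chi_i^{m,m-1})\mathbf z$. Summing over $i$ then costs a factor $N$, not $(m+1)^3$. Note also that $\chi_i^{m+1,m}\mathbf z\notin\bm V_0(K_{i,m})$, so your cutoff and your orthogonality do not match. The localized bound needs the complementary identity $B(\mathbf z_i,\mathbf w)+s(\pi\mathbf z_i,\pi\mathbf w)=0$ for every $\mathbf w\in\bm V$ vanishing on $K_{i,m}$. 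This holds for three reasons: $T_{i,m}\mathbf v_i$ is supported in $K_{i,m}$; $\pi$ acts coarse-elementwise; and the right-hand side $s(\mathbf v_i,\pi\mathbf w)$ of (\ref{variational form for global ms}) vanishes. Applied with $\mathbf w=(1-\chi_i^{m+1,m})\mathbf z$, it leaves only a term living on $K_{i,m+1}$, and the overlap count then gives $(m+1)^3$.

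The second defect is more serious. Your last bullet gives only $\|\mathbf v_{\mathrm{aux}}\|_s\le(\sqrt D+1)\bigl(\|T\mathbf v_{\mathrm{aux}}\|_a+\|\pi T\mathbf v_{\mathrm{aux}}\|_s\bigr)$. Step~3, however, perturbs in $\|\cdot\|_{k,\mathrm{imp}}$, so it needs a bound by $\|T\mathbf v_{\mathrm{aux}}\|_a$ alone. The only general way to absorb the $\pi$-term is $\|\pi\mathbf w\|_s\le\|\mathbf w\|_s\le\mu_{\min}^{-1/2}(kH)^{-1}\|\mathbf w\|_a$. Along your route this loss is unavoidable:
\begin{itemize}
\item Testing (\ref{variational form for global ms}) with $T\mathbf v_{\mathrm{aux}}$ and using Lemma~\ref{covercity} gives $\|\mathbf v_{\mathrm{aux}}\|_s\ge\alpha\|\pi T\mathbf v_{\mathrm{aux}}\|_s$.
\item Lemma~\ref{property of pi} gives $\|\pi\mathbf w\|_s\ge\mu_{\max}^{-1/2}H^{-1}\|\mathbf w\|_{L^2}-\Lambda^{-1/2}\|\mathbf w\|_a$.
\item This is of order $H^{-1}\|\mathbf w\|_a$ for elements of $\bm V_{\mathrm{glo}}$ approximating a smooth field (cf.\ Remark~\ref{finalremark}).
\end{itemize}
Hence your route cannot produce $\epsilon_m^2\lesssim C(\alpha,\Lambda,D)\,m^3\theta^m$. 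What it produces carries an extra factor of order $1+\mu_{\min}^{-1}(kH)^{-2}$. Combined with the constants from (\ref{boundedness}) and (\ref{equivalence for two norms}) used in Step~3, the admissible oversampling must grow logarithmically in $1/\beta$, $1/(kH)$, $\mu_{\max}$ and $\mu_{\min}^{-1}$, not merely like $O(\log(1/\beta))$.

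With both repairs, the perturbation argument does give $\gamma=\beta/4$ for sufficiently large $m$. That is exactly what the paper's ``for proper $l$'' delivers, with $l$ growing like $\log H^{-1}$ as in Remark~\ref{finalremark}.
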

\begin{proof}
For any $\mathbf{u}_H \in \bm{V}_{\mathup{ms}}$, we can find $\bm{\psi} \in \bm{V}_{\mathup{aux}}$ such that $\mathbf{u}_H = T_l \bm{\psi}$. We choose $T \bm{\psi} \in \bm{V}_{\mathup{glo}}$. By Theorem \ref{global well-posedness}, there exists $T^* \bm{\phi} \in \bm{V}_{\mathup{glo}}^*$ for some $\bm{\phi} \in \bm{V}_{\mathrm{\mathup{aux}}}$ such that
\[
\left| B(T \bm{\psi}, T^* \bm{\phi}) \right| \geq \beta(k, \mu_{\mathup{min}}) \| T \bm{\psi} \|_{k,\mathrm{imp}} \cdot \| T^* \bm{\phi} \|_{k,\mathrm{imp}}.
\]
Let $\mathbf{u}_H^* = T_l^* \bm{\phi}$, then we have
\begin{equation}
\label{leading control inequality}
\begin{aligned}
\left| B(\mathbf{u}_H, \mathbf{u}_H^*) \right| & = \left| B(T_l \bm{\psi}, T_l^* \bm{\phi}) \right| = \left| B(T \bm{\psi}, T^* \bm{\phi}) + B(T_l \bm{\psi} - T \bm{\psi}, T^* \bm{\phi}) \right. + \left. B(T_l \bm{\psi}, -T^* \bm{\phi} + T_l^* \bm{\phi}) \right| \\
& \geq \beta(k, \mu_{\mathup{min}}) \| T \bm{\psi} \|_{k,\mathrm{imp}} \cdot \| T^* \bm{\phi} \|_{k,\mathrm{imp}} - c_1 \| T_l \bm{\psi} - T \bm{\psi} \|_{k,\mathrm{imp}} \cdot \| T^* \bm{\phi} \|_{k,\mathrm{imp}} \\
& \quad - c_2 \| T_l \bm{\psi} \|_{k,\mathrm{imp}} \cdot \| T_l^* \bm{\phi} - T^* \bm{\phi} \|_{k,\mathrm{imp}}\\
& \geq \beta(k, \mu_{\mathup{min}}) \| T \bm{\psi} \|_{k,\mathrm{imp}} \cdot \| T^* \bm{\phi} \|_{k,\mathrm{imp}} - c_1 \| T_l \bm{\psi} - T \bm{\psi} \|_{k,\mathrm{imp}} \cdot \| T^* \bm{\phi} \|_{k,\mathrm{imp}} \\
& \quad - c_2 \| T_l \bm{\psi} - T \bm{\psi} \|_{k,\mathrm{imp}} \cdot \| T_l^* \bm{\phi} - T^* \bm{\phi} \|_{k,\mathrm{imp}} - c_2 \| T \bm{\psi} \|_{k,\mathrm{imp}} \cdot \| T_l^* \bm{\phi} - T^* \bm{\phi} \|_{k,\mathrm{imp}},
\end{aligned}
\end{equation}
where $c_1$, $c_2$ depend on $\mu_\mathup{min}$ by the boundedness of $B$ (see (\ref{boundedness})). By the decay property of the multiscale basis functions (i.e., Theorem \ref{decay}), Assumption \ref{regularity for mesh}, Lemma \ref{property of pi} and the equivalence of $\|\cdot\|_{k,\mathrm{imp}}$ and $\|\cdot\|_a$, we have
\begin{equation}
\label{controlled by T term}
\begin{aligned}
\| T_l \bm{\psi} - T \bm{\psi} \|_{k,\mathrm{imp}} \leq CC_{\mathup{ol}}(l+1)^d (1 + \frac{1}{\Lambda})\frac{\sqrt{C_*}}{\alpha} \theta^{(l-1)/2} \| T \bm{\psi} \|_{k,\mathrm{imp}}.
\end{aligned}
\end{equation}
Denote $D_1 = C(l+1)^d (1 + \frac{1}{\Lambda})\frac{\sqrt{C_*}}{\alpha} \theta^{(l-1)/2}$. Similarly, for the adjoint operator $T^*$, $T_l^*$, we have
\begin{equation}
\label{controlled by T term adjoint}
\| T_l^* \bm{\phi} - T^* \bm{\phi} \|_{k,\mathrm{imp}} \leq D_2 \| T^* \bm{\phi} \|_{k,\mathrm{imp}},
\end{equation}
where $D_2$ also contains the decay term like $\theta^{(l-1)/2}$ in $D_1$. Note that by selecting proper $l$ (i.e., using the exponential decay property) in (\ref{controlled by T term})-(\ref{controlled by T term adjoint}), we can let $D_1$, $D_2$ small enough to make the last three terms on the right-hand side of (\ref{leading control inequality}) controlled by $\frac{\beta(k, \mu_{\mathup{min}}) }{2} \| T \bm{\psi} \|_{k,\mathrm{imp}}\cdot \| T^* \bm{\phi} \|_{k,\mathrm{imp}}$. Thus, we have
\[
\left| B(\mathbf{u}_H, \mathbf{u}_H^*) \right| \geq \frac{\beta(k, \mu_{\mathup{min}}) }{2} \| T \bm{\psi} \|_{k,\mathrm{imp}}\cdot \| T^* \bm{\phi} \|_{k,\mathrm{imp}}.
\]
For $\| T \bm{\psi} \|_{k,\mathrm{imp}}$, $\| T^* \bm{\phi} \|_{k,\mathrm{imp}}$, by the triangle inequality, we have
\[
\| T \bm{\psi} \|_{k,\mathrm{imp}} \geq \| T_l \bm{\psi} \|_{k,\mathrm{imp}} - \| T_l \bm{\psi} - T \bm{\psi} \|_{k,\mathrm{imp}},
\quad
\left\| T^* \bm{\phi} \right\|_{k,\mathrm{imp}} \geq \left\| T_l^* \bm{\phi} \right\|_{k,\mathrm{imp}} - \left\| T_l^* \bm{\phi} - T^* \bm{\phi} \right\|_{k,\mathrm{imp}}.
\]
Therefore, we obtain that
\[
\begin{aligned}
\left| B(\mathbf{u}_H, \mathbf{u}_H^*) \right| & \geq \frac{\beta(k, \mu_{\mathup{min}})}{4} \| T \bm{\psi} \|_{k,\mathrm{imp}} \cdot \| T^* \bm{\phi} \|_{k,\mathrm{imp}} + \frac{\beta(k, \mu_{\mathup{min}})}{4} \| T_l \bm{\psi} \|_{k,\mathrm{imp}} \cdot \| T_l^* \bm{\phi} \|_{k,\mathrm{imp}} \\
& \quad - \frac{\beta(k, \mu_{\mathup{min}})}{4} ( \| T_l \bm{\psi} \|_{k,\mathrm{imp}} \cdot \| T_l^* \bm{\phi} - T^* \bm{\phi} \|_{k,\mathrm{imp}} + \| T_l^* \bm{\phi} \|_{k,\mathrm{imp}} \cdot \| T_l \bm{\psi} - T \bm{\psi} \|_{k,\mathrm{imp}}).
\end{aligned}
\]
Similar to the previous analysis, the last two terms can be controlled by the first term of the above inequality. More precisely,
\[
\begin{aligned}
&\quad - ( \| T_l \bm{\psi} \|_{k,\mathrm{imp}} \cdot \| T_l^* \bm{\phi} - T^* \bm{\phi} \|_{k,\mathrm{imp}} + \| T_l^* \bm{\phi} \|_{k,\mathrm{imp}} \cdot \| T_l \bm{\psi} - T \bm{\psi} \|_{k,\mathrm{imp}} ) \\
& \geq - ( \| T_l \bm{\psi} - T \bm{\psi} \|_{k,\mathrm{imp}} \cdot \| T_l^* \bm{\phi} - T^* \bm{\phi} \|_{k,\mathrm{imp}} + \| T \bm{\psi} \|_{k,\mathrm{imp}} \cdot \| T_l^* \bm{\phi} - T^* \bm{\phi} \|_{k,\mathrm{imp}}  \\
& \quad + \| T_l^* \bm{\phi} - T^* \bm{\phi} \|_{k,\mathrm{imp}} \cdot \| T_l \bm{\psi} - T \bm{\psi} \|_{k,\mathrm{imp}} + \| T^* \bm{\phi} \|_{k,\mathrm{imp}} \cdot \| T_l \bm{\psi} - T \bm{\psi} \|_{k,\mathrm{imp}} ),
\end{aligned}
\]
which can be controlled by $\| T \bm{\psi} \|_{k,\mathrm{imp}} \cdot \| T^* \bm{\phi} \|_{k,\mathrm{imp}}$ for proper $l$.
Thus, we conclude that there exists $\gamma(k, \mu_{\mathup{min}})=\frac{\beta(k, \mu_{\mathup{min}})}{4}=\frac{1}{8C_k\max\{\sqrt{k},k\}} \left( 1 + \frac{1}{\alpha}\max \left\{\mu_{\min}^{-1}, 1 \right\} \right)^{-1}$ such that
\[
\begin{aligned}
\left| B(\mathbf{u}_H, \mathbf{u}_H^*) \right| \geq \gamma(k, \mu_{\mathup{min}}) \| T_l \bm{\psi} \|_{k,\mathrm{imp}} \cdot \| T_l^* \bm{\phi} \|_{k,\mathrm{imp}} = \gamma(k, \mu_{\mathup{min}}) \| \mathbf{u}_H \|_{k,\mathrm{imp}} \cdot \| \mathbf{u}_H^* \|_{k,\mathrm{imp}}.
\end{aligned}
\]
\end{proof}
Finally, we state and prove the convergence theorem.  We first give an assumption.
\begin{assumption}
\label{regularity for mesh}
There exists a positive constant $C_\text{ol}$ such that for all $K_j\in\mathcal{T}_H$ and $m>0$, 
\[
\#\{K\in \mathcal{T}_H|\, K\subset K_{j,m}\}\leq C_\text{ol}m^d.
\]
\end{assumption}
\begin{theorem}
\label{local convergence}
Let $\mathbf{u}$ be the exact solution of (\ref{weak form}) and $\mathbf{u}_{\mathup{ms}}$ be the multiscale solution of (\ref{multiscale discretization}). Then we have the following error estimate
\begin{equation}
\label{final error}
\begin{aligned}
\|\mathbf{u} - \mathbf{u}_{\mathrm{ms}}\|_a \leq & 
 C(k,\mu) \Bigg( \frac{1}{\alpha\sqrt{\Lambda}} \|\mathbf{f}\|_{s^{-1}} 
\\
&+ \sqrt{C_{\mathrm{ol}}(l+1)^d} \cdot \frac{\sqrt{C_*}}{\alpha} \sqrt{1 + \frac{1}{\Lambda}}  \cdot \theta^{(l-1)/2} \cdot (\sqrt{D} + 1) 
\cdot \big( \|\mathbf{u}_{\mathrm{glo}}\|_a + \|\pi(\mathbf{u}_{\mathrm{glo}})\|_s \big) \Bigg),
\end{aligned}
\end{equation}
where $C(k,\mu)=\sqrt{\max\left\{\frac{\mu_{\max}}{\mu_{\min}}, \mu_{\max}\right\}}\left( 1 + \frac{\max\{\mu_{\min}^{-1}, 1\}}{\gamma(k, \mu_{\min})} \right)$, $\gamma(k, \mu_{\min})=\frac{1}{8C_k\max\{\sqrt{k},k\}} \left( 1 + \frac{1}{\alpha}\max \left\{\mu_{\min}^{-1}, 1 \right\} \right)^{-1}$, $0<\theta=\frac{(1+1/\Lambda)}{(1+1/\Lambda)+\alpha}<1$, and $\alpha,C_*,C_{\mathup{ol}},D$ are independent of $H,\Lambda,\mu_r,k$.
\end{theorem}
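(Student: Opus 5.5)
The plan is the standard split through the global solution:
\[
\|\mathbf{u}-\mathbf{u}_{\mathrm{ms}}\|_a\le \|\mathbf{u}-\mathbf{u}_{\mathrm{glo}}\|_a+\|\mathbf{u}_{\mathrm{glo}}-\mathbf{u}_{\mathrm{ms}}\|_a .
\]
The first term is bounded by Theorem \ref{global convergence}, which gives $\frac{1}{\alpha\sqrt{\Lambda}}\|\mathbf{f}\|_{s^{-1}}$. For the second term we use a C\'ea/Strang-type quasi-optimality argument for the Petrov--Galerkin scheme (\ref{multiscale discretization}), resting on the discrete inf-sup condition of Theorem \ref{ms stability}. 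For any $\mathbf{w}\in\bm{V}_{\mathrm{ms}}$, Galerkin orthogonality $B(\mathbf{u}-\mathbf{u}_{\mathrm{ms}},\mathbf{v}^*)=0$ for all $\mathbf{v}^*\in\bm{V}_{\mathrm{ms}}^*$ together with (\ref{boundedness}) gives
\[
\gamma\|\mathbf{w}-\mathbf{u}_{\mathrm{ms}}\|_{k,\mathrm{imp}}\le \sup_{\mathbf{v}^*}\frac{|B(\mathbf{w}-\mathbf{u},\mathbf{v}^*)|}{\|\mathbf{v}^*\|_{k,\mathrm{imp}}}\le \max\{\mu_{\min}^{-1},1\}\|\mathbf{u}-\mathbf{w}\|_{k,\mathrm{imp}}.
\]
Hence
\[
\|\mathbf{u}-\mathbf{u}_{\mathrm{ms}}\|_{k,\mathrm{imp}}\le\Big(1+\tfrac{\max\{\mu_{\min}^{-1},1\}}{\gamma}\Big)\inf_{\mathbf{w}\in\bm{V}_{\mathrm{ms}}}\|\mathbf{u}-\mathbf{w}\|_{k,\mathrm{imp}}.
\]
Converting between $\|\cdot\|_a$ and $\|\cdot\|_{k,\mathrm{imp}}$ via (\ref{equivalence for two norms}) produces the factor $\sqrt{\max\{\mu_{\max}/\mu_{\min},\mu_{\max}\}}$, so together these give the prefactor $C(k,\mu)$. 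It therefore suffices to choose a good $\mathbf{w}\in\bm{V}_{\mathrm{ms}}$ and bound $\|\mathbf{u}-\mathbf{w}\|_a\le\|\mathbf{u}-\mathbf{u}_{\mathrm{glo}}\|_a+\|\mathbf{u}_{\mathrm{glo}}-\mathbf{w}\|_a$.

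Write $\mathbf{u}_{\mathrm{glo}}=T\bm{\psi}=\sum_i T_i\bm{\psi}_i$ with $\bm{\psi}=\sum_{i}\bm{\psi}_i\in\bm{V}_{\mathrm{aux}}$ and $\bm{\psi}_i\in\bm{V}_{\mathrm{aux}}^i$, and take $\mathbf{w}=T_l\bm{\psi}=\sum_i T_{i,l}\bm{\psi}_i$. Set $\mathbf{z}=\sum_i(T_i-T_{i,l})\bm{\psi}_i$. We estimate $\|\mathbf{z}\|_a^2+\|\pi\mathbf{z}\|_s^2$ using the coercivity of Lemma \ref{covercity}:
\[
\alpha(\|\mathbf{z}\|_a^2+\|\pi\mathbf{z}\|_s^2)\le\Big|\sum_i\big[B((T_i-T_{i,l})\bm{\psi}_i,\mathbf{z})+s(\pi(T_i-T_{i,l})\bm{\psi}_i,\pi\mathbf{z})\big]\Big|.
\]
For each $i$ we split $\mathbf{z}=(1-\chi_i^{l+1,l})\mathbf{z}+\chi_i^{l+1,l}\mathbf{z}$. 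Since $\chi_i^{l+1,l}\mathbf{z}$ is not in $\bm{V}_0(K_{i,l})$ in general, we instead use a cutoff such as $\chi_i^{l,l-1}$, which is supported in $K_{i,l}$, so that $\chi_i^{l,l-1}\mathbf{z}\in\bm{V}_0(K_{i,l})$. The $\bm{V}_0(K_{i,l})$ part of the test function is then annihilated by the difference of (\ref{variational form for ms}) and (\ref{variational form for global ms}).

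The remaining contribution is supported in $\Omega\setminus K_{i,l-1}$. Bounding it as in Step 1 of Theorem \ref{decay}, with $|\nabla\chi|\lesssim H^{-2}$ and (\ref{T_i s_i inequality}), yields
\[
C\sqrt{C_*(1+\Lambda^{-1})}\,\big(\|(T_i-T_{i,l})\bm{\psi}_i\|_a+\|\pi(\cdot)\|_s\big)\,\big(\|\mathbf{z}\|_{a(K_{i,l+1})}+\|\pi\mathbf{z}\|_{s(K_{i,l+1})}\big).
\]
We sum over $i$ and apply Cauchy--Schwarz. Assumption \ref{regularity for mesh} says each coarse element lies in at most $C_{\mathrm{ol}}(l+1)^d$ of the regions $K_{i,l+1}$, which gives
\[
\|\mathbf{z}\|_a^2+\|\pi\mathbf{z}\|_s^2\lesssim \frac{C_{\mathrm{ol}}(l+1)^d C_*(1+\Lambda^{-1})}{\alpha^2}\,\theta^{l-1}\sum_i\big(\|T_i\bm{\psi}_i\|_a^2+\|\pi T_i\bm{\psi}_i\|_s^2\big).
\]
Theorem \ref{decay} is applied to each summand to produce the $\theta^{l-1}$ factor.

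It remains to bound $\sum_i(\|T_i\bm{\psi}_i\|_a^2+\|\pi T_i\bm{\psi}_i\|_s^2)$ by $(\sqrt D+1)^2(\|\mathbf{u}_{\mathrm{glo}}\|_a+\|\pi\mathbf{u}_{\mathrm{glo}}\|_s)^2$, and this is the step I expect to be the main obstacle. Testing (\ref{variational form for global ms}) with $T_i\bm{\psi}_i$ and using Lemma \ref{covercity} gives $\|T_i\bm{\psi}_i\|_a^2+\|\pi T_i\bm{\psi}_i\|_s^2\le\alpha^{-2}\|\bm{\psi}_i\|_s^2$, but $\alpha^{-2}$ is absent from the target constant. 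So I would instead use Lemma \ref{aux_lemma}: for each $\bm{\psi}_i$ it provides a $\mathbf{v}_i$ with $\pi\mathbf{v}_i=\bm{\psi}_i$, $\|\mathbf{v}_i\|_a^2\le D\|\bm{\psi}_i\|_s^2$, and support in $K_i$. Testing (\ref{variational form for global ms}) with $\mathbf{v}_i$ then bounds $\|\bm{\psi}_i\|_s^2$ by $(\sqrt D+1)\|\bm{\psi}_i\|_s$ times local norms of $T_i\bm{\psi}_i$. Separately, $\sum_i\|\bm{\psi}_i\|_s^2=\|\bm{\psi}\|_s^2$ should be controlled by $(\sqrt D+1)(\|\mathbf{u}_{\mathrm{glo}}\|_a+\|\pi\mathbf{u}_{\mathrm{glo}}\|_s)$ by testing the global equation for $\mathbf{u}_{\mathrm{glo}}=T\bm{\psi}$ with $\mathbf{v}=\sum_i\mathbf{v}_i$, where $\pi\mathbf{v}=\bm{\psi}$ and $\|\mathbf{v}\|_a\le\sqrt D\|\bm{\psi}\|_s$. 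This gives
\[
\|\bm{\psi}\|_s^2=|B(\mathbf{u}_{\mathrm{glo}},\mathbf{v})+s(\pi\mathbf{u}_{\mathrm{glo}},\bm{\psi})|\lesssim(\sqrt D+1)(\|\mathbf{u}_{\mathrm{glo}}\|_a+\|\pi\mathbf{u}_{\mathrm{glo}}\|_s)\|\bm{\psi}\|_s.
\]
Combining these pieces and absorbing constants into $C(k,\mu)$ and $C_*$ gives (\ref{final error}). Getting the exact bookkeeping of $\alpha$ powers to match the stated form is delicate, and I would expect some harmless redistribution of constants there.
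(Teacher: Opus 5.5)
Your architecture is the paper's: the C\'ea-type bound from Theorem~\ref{ms stability} producing $C(k,\mu)$, the choice $\mathbf{w}=T_l\bm{\psi}$, Theorem~\ref{decay} applied per coarse block, and Lemma~\ref{aux_lemma} applied to $\bm{\psi}$ to get $\|\bm{\psi}\|_s\le(\sqrt D+1)(\|\mathbf{u}_{\mathrm{glo}}\|_a+\|\pi\mathbf{u}_{\mathrm{glo}}\|_s)$. The paper only asserts the overlap inequality $\|\mathbf{w}-\mathbf{u}_{\mathrm{glo}}\|_a^2\le C_{\mathrm{ol}}(l+1)^d\sum\|\cdot\|_a^2$. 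You try to prove it, and your localization step has a genuine gap.

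Write $\mathbf{z}_i=(T_i-T_{i,l})\bm{\psi}_i$. After you annihilate $\chi_i^{l,l-1}\mathbf{z}\in\bm{V}_0(K_{i,l})$, the remainder is $B(\mathbf{z}_i,(1-\chi_i^{l,l-1})\mathbf{z})+s(\pi\mathbf{z}_i,\pi((1-\chi_i^{l,l-1})\mathbf{z}))$. It pairs the globally supported $\mathbf{z}_i$ with $\mathbf{z}$ on all of $\Omega\setminus K_{i,l-1}$. Step~1 of Theorem~\ref{decay} therefore bounds it only by norms of $\mathbf{z}$ on $\Omega\setminus K_{i,l-1}$, not on $K_{i,l+1}$. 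Summing over $i$ then costs a factor $\sqrt N\sim H^{-d/2}$ instead of $\sqrt{C_{\mathrm{ol}}(l+1)^d}$. So your displayed bound in terms of $\|\mathbf{z}\|_{a(K_{i,l+1})}+\|\pi\mathbf{z}\|_{s(K_{i,l+1})}$ does not follow from what you wrote.

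The missing idea is that the \emph{exterior} piece is the one that vanishes, so you should keep $\chi_i^{l+1,l}$. Take any $\mathbf{v}\in\bm{V}$ vanishing on $K_{i,l}$.
\begin{itemize}
\item By (\ref{variational form for global ms}), $B(T_i\bm{\psi}_i,\mathbf{v})+s(\pi T_i\bm{\psi}_i,\pi\mathbf{v})=s(\bm{\psi}_i,\pi\mathbf{v})=0$, because $\pi$ acts elementwise and $\bm{\psi}_i$ lives on $K_i$.
\item $T_{i,l}\bm{\psi}_i$ is supported in $K_{i,l}$, so its $B$-pairing and its $s(\pi\cdot,\pi\cdot)$-pairing with $\mathbf{v}$ vanish by disjointness of supports.
\end{itemize}
Hence $B(\mathbf{z}_i,\mathbf{v})+s(\pi\mathbf{z}_i,\pi\mathbf{v})=0$. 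With $\mathbf{v}=(1-\chi_i^{l+1,l})\mathbf{z}$, only $\chi_i^{l+1,l}\mathbf{z}$ survives, and it is supported in $K_{i,l+1}$. Steps~1--2 of Theorem~\ref{decay} bound it by a constant times $\sqrt{C_*(1+\Lambda^{-1})}$ times the $K_{i,l+1}$-norms of $\mathbf{z}$. Assumption~\ref{regularity for mesh} then gives $\|\mathbf{z}\|_a^2+\|\pi\mathbf{z}\|_s^2\lesssim\alpha^{-2}C_*(1+\Lambda^{-1})C_{\mathrm{ol}}(l+1)^d\sum_i(\|\mathbf{z}_i\|_a^2+\|\pi\mathbf{z}_i\|_s^2)$. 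No membership in $\bm{V}_0(K_{i,l})$ is needed.

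On constants: your per-block bound $\alpha^{-2}\|\bm{\psi}_i\|_s^2$ is exactly what the paper derives (as $2\alpha^{-2}$) before silently dropping it. The surplus powers of $\alpha^{-1}$ and of $C_*(1+\Lambda^{-1})$ therefore reflect the stated constant, not a defect in your route. Your detour testing with the local $\mathbf{v}_i$ bounds $\|\bm{\psi}_i\|_s$ by norms of $T_i\bm{\psi}_i$, which is the wrong direction, so it can be dropped. Also, $|\nabla\chi|\lesssim H^{-1}$, not $H^{-2}$.
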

\begin{remark}
\label{finalremark}
Observing Theorem \ref{local convergence}, we give more details about the error. It's clear that $\| \mathbf{f} \|_{s^{-1}}=O(\sqrt{\mu_\mathup{max}}H)$. In addition, $ \left\| \pi\left(\mathbf{u}_{\mathup{glo}}\right) \right\|_s=\left\| \pi\mathbf{u}\right\|_s=O(H^{-1}\mu_\mathup{min}^{-1/2})$. $\left\| \mathbf{u}_{\mathup{glo}} \right\|_a\leq \left\| \mathbf{u} - \mathbf{u}_{\mathup{glo}} \right\|_a + \left\| \mathbf{u} \right\|_a\leq O(\sqrt{\mu_\mathup{max}}H)+O(C_k)$, where $C_k$ can be bounded by $\mu_{\max}$ under the high-contrast setting.
{Then by letting $C(k,\mu)\alpha^{-1}\Lambda^{-1/2}\leq C$ and choosing the oversampling size $l$ such that
\[
(l+1)^{d/2} \cdot \theta^{(l-1)/2}\cdot \sqrt{\frac{\mu_{\text{max}} }{\mu_\mathup{min}} } \sim O(H),
\]
we will have an $O(H)$ convergence. Furthermore, according to the error bound in Theorem \ref{local convergence} (see (\ref{final error})), the approximation error grows with increasing wave number $k$.}
\end{remark}
\begin{proof}
In terms of Lemma \ref{covercity} and (\ref{variational form for global ms}), we have
\[
\left\| T_i \bm{\phi}_j^i \right\|_a^2 + \left\| \pi\left( T_i \bm{\phi}_j^i \right) \right\|_s^2 \leq \alpha^{-1}\big(B\left( T_i \bm{\phi}_j^i, T_i \bm{\phi}_j^i \right) + s\left( \pi\left( T_i \bm{\phi}_j^i \right), \pi\left( T_i \bm{\phi}_j^i \right) \right)\big) = \alpha^{-1}s\left( \bm{\phi}_j^i, \pi\left( T_i \bm{\phi}_j^i \right) \right).
\]
Then we obtain that $\left\| \pi\left( T_i \bm{\phi}_j^i \right) \right\|_s^2 \leq \alpha^{-1}\left\| \bm{\phi}_j^i \right\|_s \cdot \left\| \pi\left( T_i \bm{\phi}_j^i \right) \right\|_s$, that is,
\begin{equation*}
\left\| \pi\left( T_i \bm{\phi}_j^i \right) \right\|_s \leq \alpha^{-1}\left\| \bm{\phi}_j^i \right\|_s = \alpha^{-1}
\end{equation*}
and
\[
\left\| T_i \bm{\phi}_j^i \right\|_a^2\leq \alpha^{-1}\left\| \bm{\phi}_j^i \right\|_s\left\| \pi\left( T_i \bm{\phi}_j^i \right) \right\|_s\leq \alpha^{-2}.
\]
By combining above two estimates and Theorem \ref{decay}, we obtain
\[
\begin{aligned}
 \left\| T_i \bm{\phi}_j^i - T_{i,l} \bm{\phi}_j^i \right\|_a^2 + \left\| \pi\left( T_i \bm{\phi}_j^i \right) - \pi\left( T_{i,l} \bm{\phi}_j^i \right) \right\|_s^2 \leq \frac{C_*}{\alpha^2} \left( 1 + \frac{1}{\Lambda} \right)\theta^{l-1} \cdot 2\alpha^{-2}.
\end{aligned}
\]
We write $\mathbf{u}_{\mathup{glo}} = \sum_{i=1}^N \sum_{j=1}^{l_i} c_j^i T_i \bm{\phi}_j^i$ and $\mathbf{w} = \sum_{i=1}^N \sum_{j=1}^{l_i} c_j^i T_{i,l} \bm{\phi}_j^i \in \bm{V}_{\mathup{ms}}$.
By (\ref{weak form}) and (\ref{variational form for global ms}), we clearly have
\begin{equation}
\label{u-u_ms}
B\left( \mathbf{u} - \mathbf{u}_{\mathup{ms}}, \mathbf{v} \right) = 0 \quad \forall \mathbf{v} \in \bm{V}_{\mathup{ms}}^*.
\end{equation}
Following Theorem \ref{ms stability} and (\ref{u-u_ms}), (\ref{boundedness}), we have
\[
\left\| \mathbf{w} - \mathbf{u}_{\mathup{ms}} \right\|_{k,\mathrm{imp}} \leq \frac{1}{\gamma(k, \mu_{\mathup{min}})} \sup_{\mathbf{v} \in \bm{V}_{\mathup{ms}}^* \setminus \{\mathbf{0}\}} \frac{\left| B(\mathbf{w} - \mathbf{u}_{\mathup{ms}}, \mathbf{v}) \right|}{\| \mathbf{v} \|_{k,\mathrm{imp}}}.
\]
\[
\begin{aligned}
& = \frac{1}{\gamma(k, \mu_{\mathup{min}})} \sup_{\mathbf{v} \in \bm{V}_{\mathup{ms}}^* \setminus \{\mathbf{0}\}} \frac{|B(\mathbf{w} - \mathbf{u}, \mathbf{v})|}{\|\mathbf{v}\|_{k,\mathrm{imp}}} \\
& \leq \frac{\max \left\{\mu_{\min}^{-1}, 1 \right\}}{\gamma(k, \mu_{\mathup{min}})} \|\mathbf{w} - \mathbf{u}\|_{k,\mathrm{imp}}.
\end{aligned}
\]

Thus,
\[
\begin{aligned}
\|\mathbf{u} - \mathbf{u}_{\mathup{ms}}\|_{k,\mathrm{imp}} & \leq \|\mathbf{u} - \mathbf{w}\|_{k,\mathrm{imp}} + \|\mathbf{w} - \mathbf{u}_{\mathup{ms}}\|_{k,\mathrm{imp}}  \leq \left( 1 + \frac{\max \left\{\mu_{\min}^{-1}, 1 \right\}}{\gamma(k, \mu_{\mathup{min}})} \right) \|\mathbf{w} - \mathbf{u}\|_{k,\mathrm{imp}} \\
& \leq \left( 1 + \frac{\max \left\{\mu_{\min}^{-1}, 1 \right\}}{\gamma(k, \mu_{\mathup{min}})} \right) \left( \|\mathbf{u} - \mathbf{u}_{\mathup{glo}}\|_{k,\mathrm{imp}} + \|\mathbf{u}_{\mathup{glo}} - \mathbf{w}\|_{k,\mathrm{imp}} \right).
\end{aligned}
\]
By the equivalence of norms $\|\cdot\|_a$ and $\|\cdot\|_{k,\mathrm{imp}}$ (\ref{equivalence for two norms}),
\begin{equation}
\label{u-ums first}
\begin{aligned}
\|\mathbf{u} - \mathbf{u}_{\mathup{ms}}\|_a & \leq \sqrt{\max\{\mu_{\mathup{min}}^{-1}, 1\}} \left( 1 + \frac{\max \left\{\mu_{\min}^{-1}, 1 \right\}}{\gamma(k, \mu_{\mathup{min}})} \right) \sqrt{\max\{\mu_{\mathup{max}}, 1\}} \cdot \left( \|\mathbf{u} - \mathbf{u}_{\mathup{glo}}\|_a + \|\mathbf{u}_{\mathup{glo}} - \mathbf{w}\|_a \right)\\
&\leq \sqrt{\max\{\frac{\mu_{\mathup{max}}}{\mu_{\mathup{min}}}, \mu_{\mathup{max}}\}} \left( 1 + \frac{\max \left\{\mu_{\min}^{-1}, 1 \right\}}{\gamma(k, \mu_{\mathup{min}})} \right)\cdot \left( \|\mathbf{u} - \mathbf{u}_{\mathup{glo}}\|_a + \|\mathbf{u}_{\mathup{glo}} - \mathbf{w}\|_a \right). 
\end{aligned}
\end{equation}
By utilizing Assumption \ref{regularity for mesh}, Theorem \ref{decay}, the fact that $s\left(\bm{\phi}_j^i, \bm{\phi}_l^i\right) = \delta_{jl}$ (applying them to the function $\sum_{i=1}^N \sum_{j=1}^{l_i} c_j^i \left( T_i \bm{\phi}_j^i - T_{i,l} \bm{\phi}_j^i \right)$) and denoting $\bm{\phi} = \sum_{i=1}^N \sum_{j=1}^{l_i} c_j^i \bm{\phi}_j^i \in \bm{V}_{\mathrm{\mathup{aux}}}$, we obtain
\[
\begin{aligned}
\|\mathbf{w} - \mathbf{u}_{\mathup{glo}}\|_a^2 & \leq C_{\mathup{ol}} (l+1)^d \sum_{i=1}^N \sum_{j=1}^{l_i} \left\| c_j^i \left( T_i \bm{\phi}_j^i - T_{i,l} \bm{\phi}_j^i \right) \right\|_a^2 \\
& \leq C_{\mathup{ol}} (l+1)^d \cdot \frac{C_*}{\alpha^2} \left( 1 + \frac{1}{\Lambda} \right)\theta^{l-1} \sum_{i=1}^N \sum_{j=1}^{l_i} \left\| c_j^i \bm{\phi}_j^i \right\|_s^2 \\
& = C_{\mathup{ol}} (l+1)^d \cdot \frac{C_*}{\alpha^2} \left( 1 + \frac{1}{\Lambda} \right)\theta^{l-1} \cdot s(\bm{\phi}, \bm{\phi}).
\end{aligned}
\]
By the definition of $\mathbf{u}_{\mathup{glo}}$, $\bm{\phi}$ and the variational form (\ref{variational form for global ms}), we know that
\begin{equation}
\label{u_glo phi relation}
B\left(\mathbf{u}_{\mathup{glo}}, \mathbf{v} \right) + s\left(\pi\left(\mathbf{u}_{\mathup{glo}}\right), \pi(\mathbf{v})\right) = s(\bm{\phi}, \pi(\mathbf{v})) \quad \forall \mathbf{v} \in \bm{V}. 
\end{equation}
For $\bm{\phi} \in \bm{V}_{\mathrm{\mathup{aux}}}$, by Lemma \ref{aux_lemma}, there is $\bm{\xi} \in \bm{V}$ such that $\pi(\bm{\xi}) = \bm{\phi}$, $\|\bm{\xi}\|_a^2 \leq D \|\bm{\phi}\|_s^2$. Letting $\mathbf{v} = \bm{\xi}$ in (\ref{u_glo phi relation}), we have 
\[
B\left(\mathbf{u}_{\mathup{glo}}, \bm{\xi}\right) + s\left(\pi\left(\mathbf{u}_{\mathup{glo}}\right), \pi(\bm{\xi})\right) = s(\bm{\phi}, \pi(\bm{\xi})) = s(\bm{\phi}, \bm{\phi}).
\]
Then we obtain that
\[
\begin{aligned}
s(\bm{\phi}, \bm{\phi}) & = B\left(\mathbf{u}_{\mathup{glo}}, \bm{\xi}\right) + s\left(\pi\left(\mathbf{u}_{\mathup{glo}}\right), \pi(\bm{\xi})\right) \leq \left\| \mathbf{u}_{\mathup{glo}} \right\|_a \cdot \|\bm{\xi}\|_a + \left\| \pi\left(\mathbf{u}_{\mathup{glo}}\right) \right\|_s \cdot \|\bm{\phi}\|_s \\
& \leq \left( \sqrt{D} + 1 \right) \left( \left\| \mathbf{u}_{\mathup{glo}} \right\|_a + \left\| \pi\left(\mathbf{u}_{\mathup{glo}}\right) \right\|_s \right) \cdot \|\bm{\phi}\|_s.
\end{aligned}
\]
Therefore, we have
\begin{equation}
\label{u-ums first 2rd term}
\left\| \mathbf{w} - \mathbf{u}_{\mathup{glo}} \right\|_a \leq \sqrt{ C_{\mathup{ol}} (l+1)^d} \cdot \frac{\sqrt{C_*}}{\alpha} \sqrt{1 + \frac{1}{\Lambda}}\cdot\theta^{(l-1)/2}\cdot \left( \sqrt{D} + 1 \right) \cdot \left( \left\| \mathbf{u}_{\mathup{glo}} \right\|_a + \left\| \pi\left(\mathbf{u}_{\mathup{glo}}\right) \right\|_s \right).
\end{equation}
Combining (\ref{u-ums first}), (\ref{u-ums first 2rd term}) and Theorem \ref{global convergence}, we can get the desired results.
\end{proof}
\section{Numerical experiments}\label{sec:Numerical experiments}
In this section, we present some numerical examples in 2D and 3D to demonstrate the performance of the proposed methods. For the 2D experiments, we consider the numerical experiments on a unit square $\Omega=(0,1)^2$ with fine mesh $h=1/256$. For the coarse mesh sizes $H$, we consider coarse meshes to be $4\times 4$, $8 \times 8$, $16 \times 16$, and $32 \times 32$. For the 3D experiments, We conduct all numerical experiments on a unit cube $\Omega=(0,1)^3$, therefore the media term $\mu_r(x)$ is generated from $64\mathup{px}\times 64\mathup{px}\times 64\mathup{px}$ figures. For the coarse grid sizes $H$, we choose $H$ to be $1/4, 1/8 $ and $1/16$.  If $\bold{e}_h$ denotes the difference between the multiscale approximation $\bold{u}$ and the reference solution $\bold{u}_h$, we calculate the relative $a$-error and $L^2$-error defined by
\[
  \frac{\norm{\bold{e}_h}_{a(\Omega)}}{\norm{\bold{u}_h}_{a(\Omega)}} \text{  and  } \frac{\norm{\bold{e}_h}_{L^2(\Omega)}}{\norm{\bold{u}_h}_{L^2(\Omega)}},
\]
where $\bold{u}_h$ is the exact
solution (if available), or the reference solution calculated by the traditional FEM in the first-order Nédélec space on $\mathcal{T}_h$ with mesh size $ h$. All the numerical experiments were performed in Python libraries Numpy and SciPy using VSCode on a machine equipped with a 12th‑generation Intel Core i9‑12900 processor running at 2.40 GHz.
\subsection{Homogeneous structures in 3D}
We firstly consider the coefficient $\mu_r^{-1}=1$ with wave number $k=4$. The right-hand side $\bold{f} =(-k^2\sin(kx),0,0)^T$ and the impedance boundary conditions $\bold{g}$ are chosen in (\ref{eq:bc1}) such that the problem (\ref{model}) admits the exact solution $\bold{u} =\sin(kx)(1,1,1)^T$.
\begin{equation}\label{eq:bc1}
\bold{g}(x, y, z)=
\begin{cases}
(0,k\cos k+\mathrm{i}k\sin k,k\cos k+\mathrm{i}k\sin k)^T,&\text{on}\quad \{1\}\times(0,1)\times (0,1),\\
(-k\cos(kx)+\mathrm{i}k\sin(kx),0,-\mathrm{i}k\sin(kx))^T,&\text{on}\quad (0,1)\times\{1\}\times(0,1),\\
(-k\cos(kx)+\mathrm{i}k\sin(kx),\mathrm{i}k\sin(kx),0)^T,&\text{on}\quad(0,1)\times (0,1)\times \{1\},\\
(0,0,0)^T,&\text{on}\quad\{0\}\times(0,1)\times(0,1),\\
(k\cos(kx)+\mathrm{i}k\sin(kx),0,\mathrm{i}k\sin(kx))^T&\text{on}\quad(0,1)\times \{0\}\times(0,1),\\
(k\cos(kx)+\mathrm{i}k\sin(kx),\mathrm{i}k\sin(kx),0)^T&\text{on}\quad(0,1)\times(0,1) \times \{0\}.
\end{cases}
\end{equation}
For the setting of the proposed multiscale method, we fix $l_i = 4$, indicating
that we calculate the first four eigenfunctions and construct four multiscale
bases for each coarse element, while we vary the oversampling layers m from
1 to 4. For the relative error norms, we choose $\bold{u}_h$ to be the exact solution of this model, $\bold{u}$ is approximated by the CEM-GMsFEM method. We also show the relative error between the exact solution and the approximate solution obtained from the traditional FEM in the first-order Nédélec space. We refer to  \cref{tab:real-error} and \cref{fig:real-con} for the numerical results. In \cref{fig:real-con}, 
the convergence of the  FEM manifests a linear pattern w.r.t.
$H$ in the logarithmic scale, consistent with the theoretical expectation. For the convergence of the multiscale method, the convergence rate does not always exhibit a linear trend. This behavior is due to the relationship between the number of oversampling layers and the mesh size, as discussed in Remark \ref{finalremark}. Interestingly, when 
$m=4$, the convergence rates in both norms show good performance, and the resulting errors are significantly smaller than those of the FEM method. In \cref{tab:wave-error}, we vary the wave number to investigate its effect on the error. In order to avoid the pollution effect and to satisfy Assumption~\ref{resolution condition}, the errors in both the energy norm and the $L^2$
norm increase across each column as the wave number grows. This observation indicates that our results are strongly dependent on the wave number. As shown in Theorem~\ref{local convergence}, the right-hand side of the energy-norm estimate depends explicitly on 
$k$ through the parameter $C(k,\mu)$.
\begin{figure}[!ht]
    \centering
    \includegraphics[width=\linewidth]{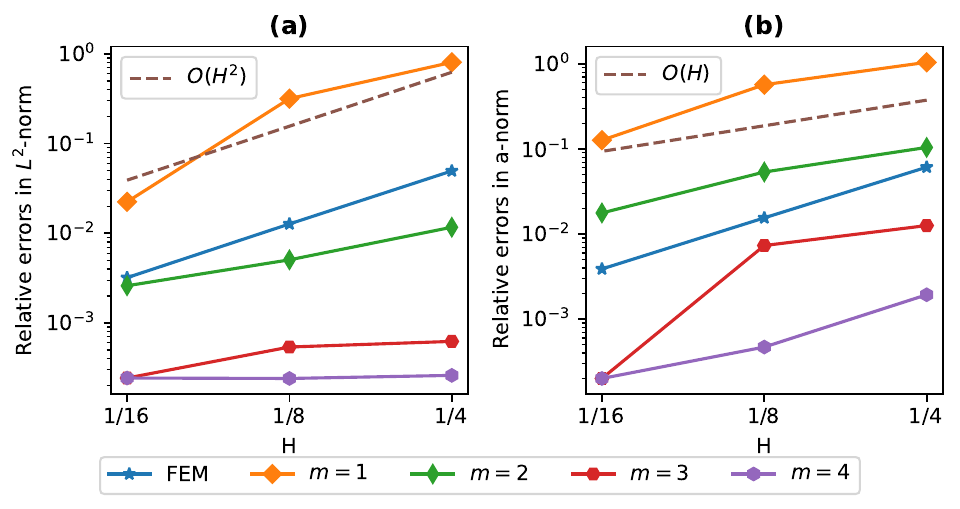}
    \caption{ Numerical results for the Homogeneous structures ($\mu_r=1)$. Subplots (a) and (b) show the relative errors of the proposed method with different numbers of oversampling layers $m$ and the  FEM w.r.t. the coarse mesh size $H$, but measured in different norms.}
    \label{fig:real-con}
\end{figure}
\begin{table}[!ht]
\caption{
The relative errors in the $a$-norm (in the columns labelled with $\norm{\cdot}_{a}$) and in the $L^2$ norm (in the columns labelled with $\norm{\cdot}_{L^2(\Omega)}$).
  }\label{tab:real1}
\centering
\resizebox{\textwidth}{!}{
\makegapedcells
\footnotesize{
\begin{tabular}{ c c c c c c c c c}
\toprule
\multirow{3}{*}{$H$} & \multicolumn{2}{c}{$FEM$}  & \multicolumn{2}{c}{$m=2$}    & \multicolumn{2}{c}{$m=3$}  & \multicolumn{2}{c}{$m=4$}\\
\cmidrule{2-9}
 & $\norm{\cdot}_{a(\Omega)}$ & $\norm{\cdot}_{L^2(\Omega)}$ & $\norm{\cdot}_{a(\Omega)}$ & $\norm{\cdot}_{L^2(\Omega)}$ & $\norm{\cdot}_{a(\Omega)}$ & $\norm{\cdot}_{L^2(\Omega)}$ & $\norm{\cdot}_{a(\Omega)}$ & $\norm{\cdot}_{L^2(\Omega)}$ \\
\midrule            
$\frac{1}{4}$       & \num{6.107e-02}            & \num{4.955e-02}               & \num{1.043e-01}            & \num{1.167e-02}              & \num{1.257e-02}            & \num{6.224e-04}              & \num{1.932e-03}            & \num{2.600e-04}              \\
$\frac{1}{8}$       & \num{ 1.549e-02}            & \num{1.270e-02}              & \num{5.359e-02}            & \num{5.055e-03}              & \num{7.323e-03}            & \num{5.387e-04}              & \num{4.690e-04}            & \num{2.403e-04}              \\
$\frac{1}{16}$       & \num{3.884e-03 }            & \num{ 3.198e-03}              & \num{1.769e-02}            & \num{2.597e-03}              & \num{2.003e-04}            & \num{2.429e-04}              & \num{2.003e-04}            & \num{2.429e-04}              \\  
\bottomrule
\end{tabular}
}
}
\label{tab:real-error}
\end{table}
\begin{table}[!ht]
\caption{The relative errors in the $a$-norm (in the columns labelled with $\norm{\cdot}_{a(\Omega)}$) and in the $L^2$ norm (in the columns labelled with $\norm{\cdot}_{L^2(\Omega)}$) with different wave number $k$.}
\centering
\resizebox{\textwidth}{!}{
\makegapedcells
\footnotesize{
\begin{tabular}{ c c c c c c c c c c}
\toprule
\multirow{3}{*}{$k$} & \multirow{3}{*}{$H$}  & \multicolumn{2}{c}{$FEM$}  & \multicolumn{2}{c}{$m=2$}    & \multicolumn{2}{c}{$m=3$}  & \multicolumn{2}{c}{$m=4$}\\
\cmidrule{3-10}
& & $\norm{\cdot}_{a(\Omega)}$ & $\norm{\cdot}_{L^2(\Omega)}$ & $\norm{\cdot}_{a(\Omega)}$ & $\norm{\cdot}_{L^2(\Omega)}$ & $\norm{\cdot}_{a(\Omega)}$ & $\norm{\cdot}_{L^2(\Omega)}$ & $\norm{\cdot}_{a(\Omega)}$ & $\norm{\cdot}_{L^2(\Omega)}$ \\
\midrule            
$4$  &$\frac{1}{4}$  & \num{6.107e-2}   & \num{4.955e-2}              & \num{5.559e-03}            & \num{1.715e-03}              & \num{2.429e-04}            & \num{2.003e-04}              & \num{2.492e-04}            & \num{2.003e-04}           \\
$8$    &$\frac{1}{8}$    & \num{9.219e-02}            & \num{9.807e-02}              & \num{2.366e-02}            & \num{3.875e-03}              & \num{5.559e-03}            & \num{1.715e-03}              & \num{1.796e-03}            & \num{1.636e-03}              \\
$16$   &$\frac{1}{16}$     & \num{1.909e-01}            & \num{1.929e-01}              & \num{3.250e-02}            & \num{1.691e-02}              & \num{1.347e-02}            & \num{1.288e-01}              & \num{1.279e-01}            & \num{1.267e-01}              \\      
$32$    &$\frac{1}{32}$    & \num{3.879e-01}            & \num{3.863e-01}              & \num{1.028e-01}            & \num{1.027e-01}              & \num{1.021e-01}            & \num{1.019e-01}              & \num{1.021e-01}            & \num{1.019e-01}              \\
\bottomrule
\end{tabular}
}
}
\label{tab:wave-error}
\end{table}
\subsection{High-contrast photonic band structures in 3D}  In this example, we test the robustness of our methods for domains with high-contrast photonic band structures, which are shown in \cref{{fig:highcon}}. In particular, we denote the coefficient \(\mu_r^{-1}\) and illustrated in \cref{{fig:highcon}}-(a), as corresponding to \textbf{Model 1}, and the photonic crystal structure with holes, shown in \cref{{fig:highcon}}-(b), as \textbf{Model 2}. Both models are often compared in literature \cite{Ma2025, Verfuerth2019} because they exhibit similar photonic bandgaps for certain polarizations (e.g., TM modes in rods-in-air vs. TE modes in holes-in-slab \cite{Johnson1999}), but their effective medium descriptions differ. For the source and boundary terms, we choose $\bold{f}(x,y,z)=(1,1,1)$ and suitable $\bold{g}(x,y,z)$ for our test. Due to the absence of exact solutions for these two models, we compute the relative error norms by taking $\bold{u}_h$ to be approximated by the CEM-GMsFEM method, while the reference solution is obtained using the standard FEM in the first-order the first-order Nédélec space.
\begin{figure}[!ht]    
\centering
    \includegraphics[width=\linewidth]{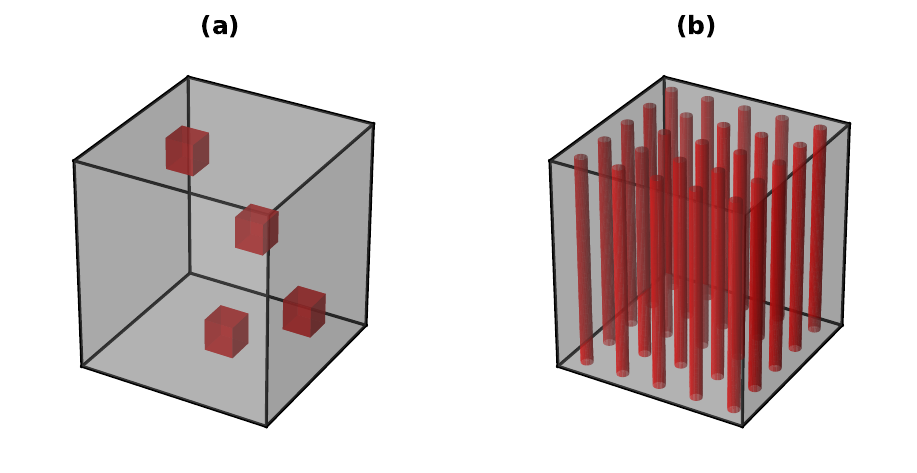}
\caption{Computational domains:(a) \textbf{Model 1}: High-contrast red cubic inclusion with value $10^3$.
(b) \textbf{Model 2}: High-contrast red cubic thin red cylinders with value $10^3$.}
\label{fig:highcon}
\end{figure}

\subsubsection{Convergence test of \textbf{Model~1}}
To further interpret the numerical results, we first examine the electromagnetic field distribution from the perspective of a three-dimensional rods-in-air photonic crystal with randomly distributed high-contrast cubic inclusions, as illustrated in \cref{fig:highcon}-(a). The relative errors in the energy norm and the \( L^2 \) norm for \textbf{Model~1} are reported in \cref{fig:model4-con} and \cref{tab:random1}. In \textbf{Model~1}, we consider a random inclusion configuration, which is commonly adopted in multiscale methods as a benchmark for assessing robustness with respect to nonperiodic and highly heterogeneous coefficient distributions \cite{ye2024multiscale}. As shown in \cref{fig:model4-con}, even for a refined coarse mesh size \( H = 1/16 \), the relative error in the energy norm remains around \(20\%\). {Furthermore, the standard edge-element method on coarse meshes does not exhibit clear convergence behavior, as indicated by the nearly
flat error blue curves. In contrast, the proposed CEM-GMsFEM 
method demonstrates stable convergence and substantially improved accuracy 
when the oversampling size is chosen as $m=4$.} The classical CEM-GMsFEM \cite{chung2018} has been shown to be effective for problems featuring long, high-contrast channels, and the proposed method retains this capability. In general, the relative errors measured in the \( L^2 \) norm are approximately one order of magnitude smaller than those in the energy norm. In particular, for \( m = 3 \) and \( m = 4 \), the proposed method achieves a relative \( L^2 \)-error of approximately \(0.4\%\).

The associated geometric configuration is illustrated by the top view of the photonic crystal structure, where four high-contrast cubic inclusions are embedded at random locations within the background medium. The corresponding two-dimensional field profiles are shown in \cref{fig:placeholder1}. Although the inclusion configuration is not strictly periodic, the presence of strong material contrast still induces pronounced multiple scattering effects. For the considered wave number \( k = 4 \), the interaction between the incident waves and the randomly placed inclusions leads to partial suppression of propagating modes and gives rise to an effective medium behavior at the macroscopic scale. Consequently, the electromagnetic field exhibits a smooth spatial variation in the two-dimensional slices, while fine-scale oscillations associated with individual inclusions are largely averaged out. These results demonstrate that the proposed method remains robust for nonperiodic photonic crystal configurations and successfully reproduces the homogenized electromagnetic response of complex three-dimensional media.
\begin{figure}[!ht]
    \centering
    \includegraphics[width=\linewidth]{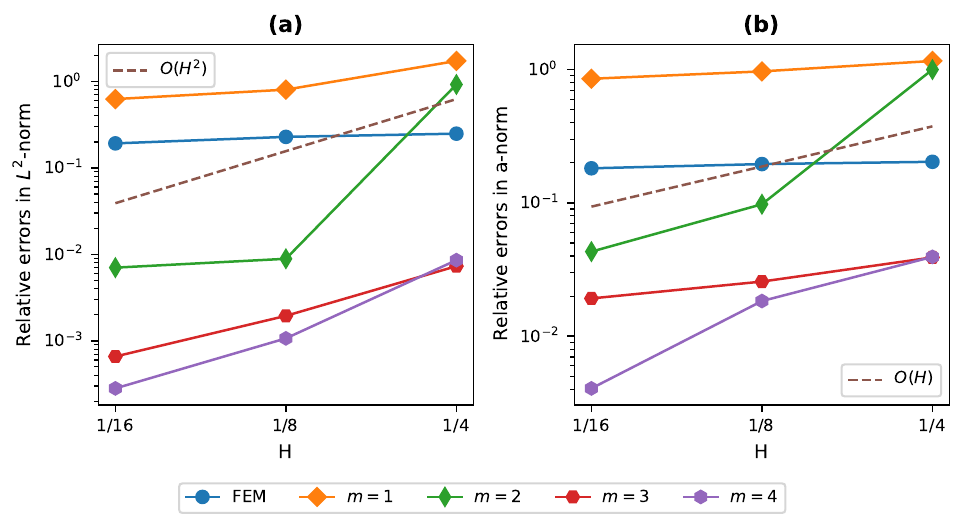}
    \caption{Numerical results for the High-contrast photonic band structures in \textbf{Model 1 
    }. Subplots (a) and (b) show the relative errors of the proposed method with different numbers of oversampling layers $m$ and the FEM  w.r.t. the coarse mesh size $H$, but measured in different norms.}
    \label{fig:model4-con}
\end{figure}
\begin{table}[!ht]
\caption{Relative errors in the a-norm (columns labelled $\|\cdot\|_{a(\Omega)}$) and in the $L^{2}$ norm (columns labelled $\|\cdot\|_{L^{2}(\Omega)}$) for \textbf{Model 1 
}.}
\centering
\resizebox{\textwidth}{!}{
\makegapedcells
\footnotesize
\begin{tabular}{c c c c c c c c c}
\toprule
\multirow{3}{*}{$H$} & \multicolumn{2}{c}{$m=1$} & \multicolumn{2}{c}{$m=2$} & \multicolumn{2}{c}{$m=3$} & \multicolumn{2}{c}{$m=4$} \\
\cmidrule{2-9}
 & $\|\cdot\|_{a(\Omega)}$ & $\|\cdot\|_{L^{2}(\Omega)}$
 & $\|\cdot\|_{a(\Omega)}$ & $\|\cdot\|_{L^{2}(\Omega)}$
 & $\|\cdot\|_{a(\Omega)}$ & $\|\cdot\|_{L^{2}(\Omega)}$
 & $\|\cdot\|_{a(\Omega)}$ & $\|\cdot\|_{L^{2}(\Omega)}$ \\
\midrule
$\frac{1}{4}$ & \num{1.159e+0} & \num{1.725e+0} & \num{9.997e-01} & \num{9.223e-01}&\num{3.902e-02}  &  \num{7.304e-03} & \num{3.939e-02} & \num{8.569e-03} \\
$\frac{1}{8}$  & \num{9.688e-01}& \num{8.038e-01} &\num{ 9.772e-02} & \num{8.888e-03} & \num{2.571e-02} & \num{1.944e-03} & \num{1.837e-02} & \num{1.067e-03} \\
$\frac{1}{16}$  & \num{8.531e-01} & \num{6.253e-01} & \num{4.311e-02} & \num{7.022e-03} & \num{1.922e-02} & \num{6.587e-04} & \num{4.065e-03} & \num{2.816e-04} \\
\bottomrule
\end{tabular}
}
\label{tab:random1}
\end{table}
\begin{table}[!ht]
\centering
\caption{{Comparison of DOFs for the reference solution and the proposed CEM-GMsFEM method with $l_i=4$.}}
\label{DOFs1}
\renewcommand{\arraystretch}{1.2}
\begin{tabular}{lcccc}
\toprule
\multirow{2}{*}{$m=3$} 
& Reference solution 
& \multicolumn{3}{c}{CEM-GMsFEM solution} \\
\cmidrule(lr){3-5}
& $h = 1/64$ 
& $H=1/32$ 
& $H=1/16$ 
& $H=1/4$ \\
\midrule
DOFs 
& 811200 
& 131072
& 16384
& 256 \\
\bottomrule
\end{tabular}
\end{table}
\begin{figure}
    \centering
    \includegraphics[width=\linewidth]{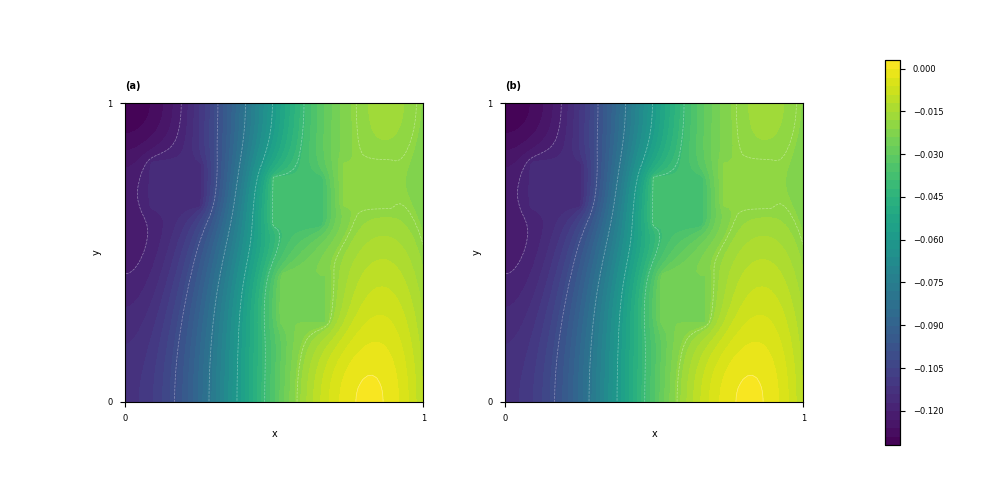}
\caption{Top view of the 3D photonic crystal (rods-in-air) in \textbf{Model 1}. (a) Reference solution obtained by FEM; (b) Solution computed with the proposed multiscale method.}
    \label{fig:placeholder1}
\end{figure}
{\subsubsection{Influence of the number of local basis functions in \textbf{Model~1} }}
{To investigate the influence of the number of local basis functions, we vary 
$l_i \in \{1,2,3,4,5\}$ in \textbf{Model~1} while keeping all other parameters fixed. 
Table~\ref{tab:li_effect} and Table~\ref{DOFs2} report the corresponding relative $L^2$-norm and $a$-norm errors, together with the offline basis construction time, the CEM online solve time, the fine-scale FEM solve time, and degrees of freedom (DOFs). As shown in Figure~\ref{fig:basis}, the error decreases rapidly as $l_i$ increases, 
demonstrating the spectral convergence property of the local approximation space. 
In particular, increasing $l_i$ from $1$ to $2$ leads to a dramatic reduction 
in both the $L^2$-norm error (from $0.2727$ to $0.0296$) and the energy error 
(from $0.5390$ to $0.1069$). A further increase from $l_i=2$ to $l_i=3$ still 
provides noticeable improvement, especially in the energy norm. However, once the dominant local eigenmodes are included, the error decay begins 
to saturate. The improvement from $l_i=3$ to $l_i=4$ is relatively modest, and 
although $l_i=5$ yields additional accuracy, the relative gain is much smaller 
compared with the initial enrichment steps. This behavior confirms that the 
essential multiscale features are captured by only a few carefully selected 
local basis functions.}

{From a computational perspective, increasing $l_i$ enlarges the dimension of 
the multiscale space (see Table~\ref{tab:li_effect}), which leads to higher offline cost. 
This is because larger local eigenvalue problems must be solved and more basis 
functions are constructed over oversampling regions. The offline time increases 
from $17.25$ seconds at $l_i=1$ to $40.65$ seconds at $l_i=5$, reflecting the 
growing complexity of the local spectral problems. In contrast, the online CEM solve time remains very small compared with the 
fine-scale FEM solve time. Even for $l_i=5$, the online time is only $0.0141$ 
seconds, while the fine-scale FEM system (with $811200$ degrees of freedom) 
requires $0.0357$ seconds. The CEM-GMsFEM system, even at its largest size 
($3060$ degrees of freedom), is still two to three orders of magnitude smaller 
than the fine-grid system. This clearly demonstrates the massive dimension 
reduction achieved by the proposed method.}

{Therefore, a clear trade-off exists between accuracy and computational cost. Small values of $l_i$ yield low offline cost but insufficient accuracy, 
whereas large values of $l_i$ significantly increase the offline computational 
burden without proportional improvement in accuracy. Based on these observations, we choose $l_i = 4$ in the subsequent experiments. 
At this level, the error decay is essentially saturated, while both the offline and online
time consuming remains acceptable. Moreover, selecting a slightly larger 
multiscale space enhances stability and robustness, particularly for 
heterogeneous and high-contrast coefficients. Hence, $l_i = 4$ represents a 
balanced choice between accuracy, robustness, and computational efficiency.}
\begin{figure}[!ht]
    \centering
    \includegraphics[width=\linewidth]{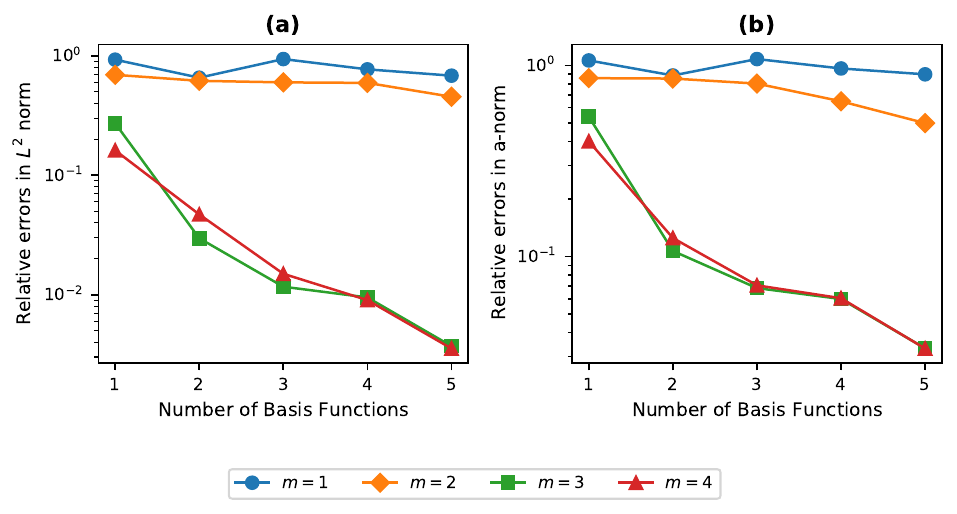}
\caption{The relative errors of the proposed method with different numbers of basis functions $l_i$.}
    \label{fig:basis}
\end{figure}
\begin{table}[!ht]
\centering
\caption{{Influence of the number of local basis functions $l_i$ on accuracy and computational cost ($m=3$).}}
\label{tab:li_effect}
\begin{tabular}{c c c c c c}
\hline
$l_i$ & $L^2$ Error & Energy Error & Offline (s) & Online (s) & FEM (s) \\
\hline
1 & 0.2727 & 0.5390 & 17.2539  & 0.0008 & 0.0357\\
2 & 0.0296 & 0.1069 & 20.0772& 0.0016 &  0.0357 \\
3 & 0.0116 & 0.0683 & 37.8429 & 0.0050 &  0.0357 \\
4 & 0.0094 & 0.0599& 38.8747 & 0.0078& 0.0357\\
5 &  0.0037 & 0.0330 & 40.6454 & 0.0141& 0.0357\\
\hline
\end{tabular}
\end{table}
\begin{table}[!ht]
\centering
\caption{{Comparison of DOFs for the reference solution and the proposed CEM-GMsFEM methods with $H = 1/8$.}}
\label{DOFs2}
\renewcommand{\arraystretch}{1.2}
\begin{tabular}{lcccccc}
\toprule
\multirow{2}{*}{$m=3$} 
& Reference solution 
& \multicolumn{5}{c}{CEM-GMsFEM solution} \\
\cmidrule(lr){3-7}
& $h = 1/64$ 
& $l_i = 1$ 
& $l_i = 2$ 
& $l_i = 3$ 
& $l_i = 4$ 
& $l_i = 5$ \\
\midrule
DOFs 
& 811200 
& 512 
& 1024 
& 1536 
& 2048 
& 3060 \\
\bottomrule
\end{tabular}
\end{table}
\subsubsection{Convergence test of \textbf{Model~2}}
In \textbf{Model~2}, we revisit the periodic cylindrical inclusion model. The coefficient profile \( \mu_r \) is defined as shown in \cref{fig:highcon}-(b), corresponding to a \(5 \times 5\) periodic configuration. We first examine the convergence behavior of the numerical errors in order to verify the theoretical predictions of Theorem~\ref{local convergence}. The corresponding results are presented in \cref{tab:model2} and \cref{fig:model2}. In all tests, we fix \( l_i = 4 \) and vary the number of oversampling layers \( m \) from 1 to 4. As illustrated in \cref{fig:model2}, when \( m = 3 \) and the coarse mesh size \( H \) is refined from \( 1/8 \) to \( 1/16 \), the relative errors in both the energy norm and the \( L^2 \) norm increase. This behavior can be attributed to the local multiscale error term, which contains a factor of \( H^{-1} \). By increasing the number of oversampling layers, the accuracy improves significantly. In particular, for \( H = 1/16 \), the numerical solution achieves a relative error of approximately \(10\%\) in the energy norm, exhibiting first-order convergence in the energy norm and second-order convergence in the \( L^2 \) norm. {In contrast, the standard edge-element method does not demonstrate clear convergence behavior, as evidenced by the nearly flat blue error curves in \cref{fig:model2}}. These observations indicate that the accuracy of the proposed method is jointly governed by the coarse mesh size \( H \) and the number of oversampling layers \( m \), which is fully consistent with the theoretical results established in Theorem~\ref{local convergence}. 

To facilitate a detailed comparison of the solutions and to better resolve the wave propagation behavior, we examine two-dimensional slices of the electromagnetic field on the plane \( z = 0.5 \); see \cref{fig:model22d}. The simulations are performed with fixed parameters \( H = 1/16 \), \( m = 4 \), and wave number \( k = 4 \), corresponding to a frequency range within the photonic band structure. The numerical results exhibit sharp transitions between the matrix and the inclusion regions, reflecting the high contrast in the material coefficients. When the periodic lattice is deliberately perturbed by introducing a defect, a localized resonant mode is formed in which electromagnetic waves with frequencies lying in the photonic band gap are confined and cannot propagate into the surrounding periodic medium. This field localization is clearly observed in the numerical profile: the pronounced peak (yellow) at the defect center indicates strong energy confinement, while the surrounding periodic structure acts as an effective reflective barrier that suppresses wave propagation. This mechanism underlies the fundamental operating principle of photonic crystal cavities.

{In \cref{tab:high-error}, we further investigate the dependence of the numerical errors on the contrast ratio and the number of oversampling layers. As shown in \cref{local convergence} and the subsequent remark, for a given contrast value, a sufficiently large oversampling size is required in order to achieve the 
desired convergence rate. This theoretical prediction is clearly confirmed by 
the numerical results presented in \cref{tab:high-error}. For a fixed high-contrast ratio $\Upsilon$, the accuracy improves as the oversampling size increases. This behavior can be observed in each row of 
\cref{tab:high-error}: when $m$ increases from $2$ to $4$ while keeping 
$\Upsilon$ fixed, both the $L^2$-error and the energy-norm error decrease 
accordingly. On the other hand, for a fixed oversampling size $m$, the performance of the 
scheme deteriorates as the contrast of the medium increases. This trend is 
visible in each column of \cref{tab:high-error}: when $m$ is fixed and 
$\Upsilon$ increases from $10$ to $10^4$, the errors grow. This indicates that 
higher contrast requires larger oversampling regions in order to maintain the 
same level of accuracy. These observations are fully consistent with the theoretical analysis, and 
similar phenomena have also been reported in~\cite{chung2018}.}

\begin{figure}[!ht]
    \centering
    \includegraphics[width=\linewidth]{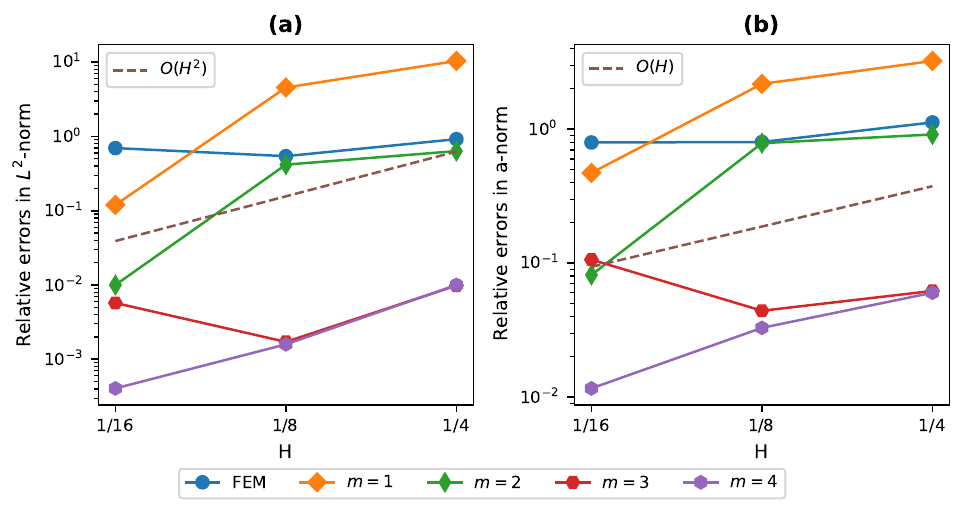}
    \caption{Numerical results for the High-contrast photonic band structures in \textbf{Model 2}. Subplots (a) and (b) show the relative errors of the proposed method with different numbers of oversampling layers $m$ and the FEM w.r.t. the coarse mesh size $H$.}
    \label{fig:model2}
\end{figure}
\begin{table}[!ht]
\caption{Relative errors in the a-norm (columns labelled $\|\cdot\|_{a(\Omega)}$) and in the $L^{2}$ norm (columns labelled $\|\cdot\|_{L^{2}(\Omega)}$) for \textbf{Model 2}.}
\centering
\resizebox{\textwidth}{!}{
\makegapedcells
\footnotesize
\begin{tabular}{c c c c c c c c c}
\toprule
\multirow{3}{*}{$H$} & \multicolumn{2}{c}{$m=1$} & \multicolumn{2}{c}{$m=2$} & \multicolumn{2}{c}{$m=3$} & \multicolumn{2}{c}{$m=4$} \\
\cmidrule{2-9}
 & $\|\cdot\|_{a(\Omega)}$ & $\|\cdot\|_{L^{2}(\Omega)}$
 & $\|\cdot\|_{a(\Omega)}$ & $\|\cdot\|_{L^{2}(\Omega)}$
 & $\|\cdot\|_{a(\Omega)}$ & $\|\cdot\|_{L^{2}(\Omega)}$
 & $\|\cdot\|_{a(\Omega)}$ & $\|\cdot\|_{L^{2}(\Omega)}$ \\
\midrule
$\frac{1}{4}$  & \num{3.230e+0} & 1.036e+1& \num{9.134e-01} & \num{6.347e-01} & \num{6.196e-02} & \num{7.832e-03} & \num{6.007e-02} & \num{1.000e-02} \\
$\frac{1}{8}$  & \num{2.178e+0}& \num{4.555e+0}& \num{7.861e-01 }& \num{4.168e-01} & \num{4.407e-02} & \num{1.715e-03} & \num{3.285e-02} & \num{1.590e-03} \\
$\frac{1}{16}$ & \num{4.715e-01} & \num{1.196e-01} & \num{8.150e-02} & \num{1.000e-02} &\num{1.061e-01}  &  \num{5.734e-03} & \num{1.160e-02} & \num{4.060e-04}\\
\bottomrule
\end{tabular}
}
\label{tab:model2}
\end{table}
\begin{figure}[!ht]
\centering
\includegraphics[width=\linewidth]{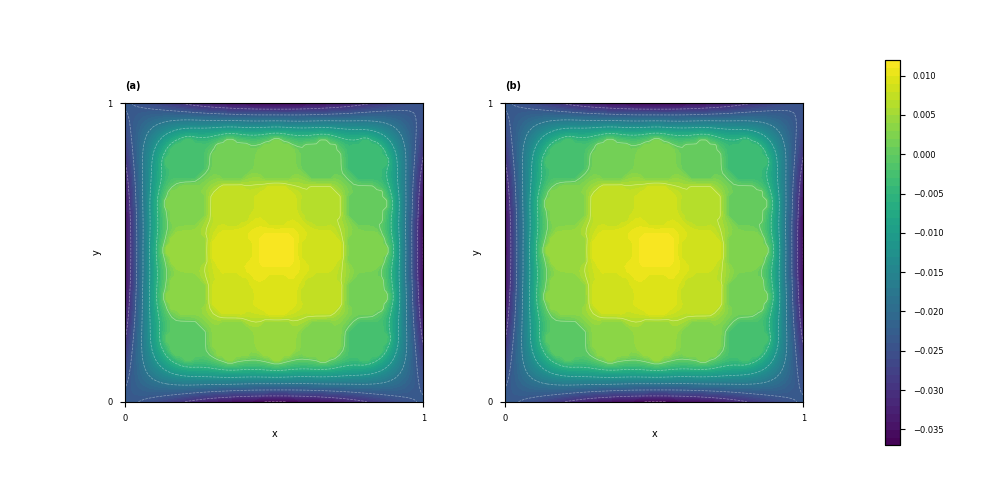}
\caption{Cross-section of the 3D photonic crystal ( holes-in-slab) in \textbf{Model 2}. (a) Reference solution obtained by FEM; (b) Solution computed with the proposed multiscale method.}
\label{fig:model22d}
\end{figure}
\begin{table}[!ht]
\caption{{The relative errors in the $a$-norm (in the columns labelled with $\norm{\cdot}_{a(\Omega)}$) and in the $L^2$ norm (in the columns labelled with $\norm{\cdot}_{L^2(\Omega)}$) with different contrast ratios $\Upsilon$ for \textbf{Model 2}.}}
\centering
\resizebox{\textwidth}{!}{
\makegapedcells
\footnotesize{
\begin{tabular}{ c c c c c c c c c c}
\toprule
\multirow{3}{*}{$\Upsilon$ } & \multirow{3}{*}{$H$}  &
\multirow{3}{*}{$h$} &
\multirow{3}{*}{$k$}

&\multicolumn{2}{c}{$m=2$}    & \multicolumn{2}{c}{$m=3$}  & \multicolumn{2}{c}{$m=4$}\\
\cmidrule{5-10}
& &  &  & $\norm{\cdot}_{a(\Omega)}$ & $\norm{\cdot}_{L^2(\Omega)}$ & $\norm{\cdot}_{a(\Omega)}$ & $\norm{\cdot}_{L^2(\Omega)}$ & $\norm{\cdot}_{a(\Omega)}$ & $\norm{\cdot}_{L^2(\Omega)}$ \\
\midrule            
$10$    &$\frac{1}{8}$&$\frac{1}{64}$ &$4$ & \num{2.363e-1}            & \num{2.842e-2}              & \num{2.940e-2}            & \num{9.020e-4}              & \num{5.315e-3}            & \num{3.620e-4}              \\
$10^2$  &  $\frac{1}{8}$&$\frac{1}{64}$ &$4$              & \num{5.597e-1}            & \num{1.927e-1}              & \num{4.183e-2}            & \num{1.113e-3}              & \num{5.895e-3}            & \num{6.250e-4}              \\
$10^3$   &  $\frac{1}{8}$&$\frac{1}{64}$ &$4$       & \num{7.861e-01}            & \num{4.168e-01}              & \num{4.407e-02}           & \num{1.175e-03}              & \num{3.285e-02}            & \num{1.590e-04}              \\      
$10^4$  &  $\frac{1}{8}$&$\frac{1}{64}$ &$4$                & 0.207e+1           & 0.148e+1             & \num{3.720e-1}            & \num{7.824e-2}              & \num{8.972e-2}            & \num{8.139e-4}              \\
\bottomrule
\end{tabular}
}
}
\label{tab:high-error}
\end{table}

\subsubsection{{Influence of wave number $k$ in \textbf{Model~2}}}
{In this subsection, we investigate the performance of the proposed CEM-GMsFEM for \textbf{Model~2} with more wave numbers \( k \in \{4,8,16,32\} \) in heterogeneous media to further evaluate the robustness of the method. Such regimes are well known to be challenging for standard finite element methods due to the pollution effect and the need for very fine meshes to accurately resolve wave propagation. Meanwhile, we also vary \( l_i \) to examine whether increasing the number of eigenvalues improves the accuracy of the method. 
In the following tests, we fix \( m = 3 \). 
The results are presented in \cref{tab:wave-error-model2}. We assess the robustness of the method with respect to increasing wave numbers. 
In particular, we focus on the decay of the \( L^2 \)-error and the energy-norm error as \( l_i \) increases, as well as the stability of the method for large values of \( k \).  To avoid the pollution effect and to satisfy Assumption~\ref{resolution condition}, sufficiently fine resolutions are required as the wave number increases.
As shown in \cref{tab:wave-error-model2}, both the energy-norm error and the \( L^2 \)-error increase within each column as the wave number grows. 
This behavior is consistent with the theoretical convergence analysis presented in \cref{local convergence}.
 }
\begin{table}[!ht]
\caption{{The relative errors in the $a$-norm (in the columns labelled with $\norm{\cdot}_{a(\Omega)}$) and in the $L^2$ norm (in the columns labelled with $\norm{\cdot}_{L^2(\Omega)}$) with different wave number $k$.}}
\centering
\resizebox{\textwidth}{!}{
\makegapedcells
\footnotesize{
\begin{tabular}{ c c c c c c c c c c}
\toprule
\multirow{3}{*}{$k$} & \multirow{3}{*}{$H$}  & \multicolumn{2}{c}{$l_i=1$}  & \multicolumn{2}{c}{$l_i=2$}    & \multicolumn{2}{c}{$l_i=3$}  & \multicolumn{2}{c}{$l_i=4$}\\
\cmidrule{3-10}
& & $\norm{\cdot}_{a(\Omega)}$ & $\norm{\cdot}_{L^2(\Omega)}$ & $\norm{\cdot}_{a(\Omega)}$ & $\norm{\cdot}_{L^2(\Omega)}$ & $\norm{\cdot}_{a(\Omega)}$ & $\norm{\cdot}_{L^2(\Omega)}$ & $\norm{\cdot}_{a(\Omega)}$ & $\norm{\cdot}_{L^2(\Omega)}$ \\
\midrule            
$4$  &$\frac{1}{4}$  & \num{1.571e-1}   & \num{3.263e-02}              & \num{1.200e-01}            & \num{2.325e-02}              & \num{6.545e-02}            & \num{2.831e-03}              &  \num{6.196e-02} & \num{7.832e-03}         \\
$8$    &$\frac{1}{8}$    & \num{2.461e-1}            & \num{5.840e-2}              & \num{1.771e-1}            & \num{3.007e-2}              & \num{7.781e-2}            & \num{5.855e-3}              & \num{7.527e-2}            & \num{8.407e-3}              \\
$16$   &$\frac{1}{16}$     & \num{3.391e-1}            & \num{1.200e-1}              & \num{2.131e-1}            & \num{7.051e-02}              & \num{8.762e-02}            & \num{3.370e-2}              & \num{7.878e-2}            & \num{8.289e-3}              \\      
$32$    &$\frac{1}{32}$    & \num{4.371e-1}            & \num{1.942e-1}              & \num{2.799e-1}            & \num{8.191e-2}              & \num{9.221e-2}            & \num{4.772e-2}              & \num{8.431e-2}            & \num{9.861e-3}              \\
\bottomrule
\end{tabular}
}
}
\label{tab:wave-error-model2}
\end{table}
{\subsubsection{Spectral problem in \textbf{Model~2} }}
{For the spectral problems \cref{{local spectral problem}}, we consider the coefficients from \textbf{Model~2}, 
which consists of 25 cylindrical inclusions distributed throughout the computational 
domain with a high-contrast ratio $\Upsilon = 10^{3}$. In \cref{tab:spectral}, we present the values of the first four eigenvalues 
$\{\lambda_1, \lambda_2, \lambda_3, \lambda_4\}$ computed from spectral problem over the selected coarse element within $H\in\{1/8, 1/16, 1/32\}$.
From \cref{tab:spectral}, we observe that for $\lambda_1$, the minimum eigenvalues are on the 
order of $10^{-3}$, indicating the presence of very small eigenvalues. 
In contrast, for $\lambda_3$ and $\lambda_4$, the maximum values are significantly 
larger, on the order of $10^{1}$, which is several orders of magnitude greater than 
those of $\lambda_1$. These results clearly demonstrate the existence of a spectral gap structure in the computed eigenvalues, which provides a way to choose the number of auxiliary basis functions adaptively \cite{Xie2026}.}
\begin{table}[!ht]
\caption{
{For \textbf{Model~2} with high-contrast ratio $\Upsilon=10^3$, 
the values of the first four eigenvalues of the marked coarse element.}
}
\label{tab:spectral}
\centering
\makegapedcells
\footnotesize
\begin{tabular}{c c c c c}
\toprule
$H$ & $\lambda_1$ & $\lambda_2$ & $\lambda_3$ & $\lambda_4$ \\
\midrule
$\frac{1}{8}$  & \num{4.266e-02} & \num{4.592e-01} & \num{6.436e-01} & \num{1.627} \\
$\frac{1}{16}$ & \num{1.070e-02} & \num{1.807e-01} & \num{4.434e-01} & \num{10.755} \\
$\frac{1}{32}$ & \num{2.667e-03} & \num{3.308e-01} & \num{3.166e+0}     & \num{3.666} \\
\bottomrule
\end{tabular}
\end{table}
{\subsubsection{Plot of the local eigenfunctions and multiscale basis functions}}
{In order to visualize the local eigenvectors of the previous spectral problem, we continue to use the configuration of \textbf{Model~2}. The top view of the computational domain  with cubic inclusions is shown in 
\cref{fig:maxwell-eigen}\textbf{(a)}. The circled square indicates the local coarse element (the cubic element) on which 
the spectral problem is specifically performed. The values of the first three eigenfunctions computed on the marked coarse element are displayed in \cref{fig:maxwell-eigen}\textbf{(b)/(c)/(d)}. Due to the high-contrast cylindrical inclusions, a blurred circular region can be clearly observed inside the square domain.}

{
In \cref{fig:maxwell-multiscale}, the top view of the corresponding multiscale basis fucntions with different oversampling layers shows that the dominant modes 
are strongly influenced by the high-contrast inclusions. In these regions, 
the multiscale basis functions capture localized multiscale features and indicate that 
the direction of wave propagation changes significantly when passing through 
the high-contrast media. We select the second eigenfunction $\phi_j^{2}$ obtained from the local spectral problem, following the procedure described in \cref{Multiscale basis}, to construct the multiscale basis functions. We consider oversampling layers $m \in \{1,2,3\}$, as illustrated in \cref{fig:maxwell-multiscale}\textbf{(a)--(c)}. Although the resulting multiscale basis functions appear visually similar, the underlying computations are carried out on different oversampled domains. This distinction is highlighted by the red boundaries shown in \cref{fig:maxwell-multiscale}\textbf{(a)--(c)}, which indicate the varying oversampling regions. The location of the selected 
coarse element determines the maximum admissible number of oversampling 
layers, which is $m = 4$.  We treat these multiscale basis functions ($m=4$) as approximations of the global basis functions. We then plot the $L^2$ and energy errors between the multiscale basis functions and the global basis functions.  The results 
demonstrate exponential decay in \cref{fig:maxwell-multiscale} (d), which also confirms \cref{decay}. Based on these observations , our method is able to construct new multiscale basis functions that provide accurate approximations of the global basis functions with reduced model complexity.}
\begin{figure}[!ht]
\centering
 \includegraphics[width=\linewidth]{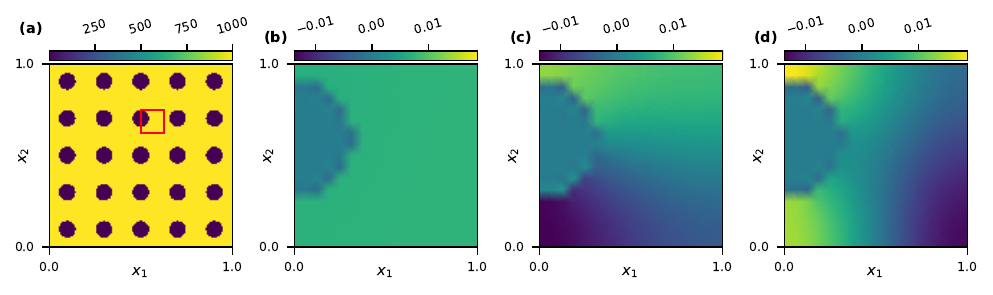}
\caption{   \SubplotTag{(a)} The coefficient profile and the marked coarse element.
    \SubplotTag{(b)}--\SubplotTag{(d)} The plot of the first/second/third eigenfunction corresponding to the marked coarse element. }
 \label{fig:maxwell-eigen}
\end{figure}
\begin{figure}[!ht]
\centering
 \includegraphics[width=\linewidth]{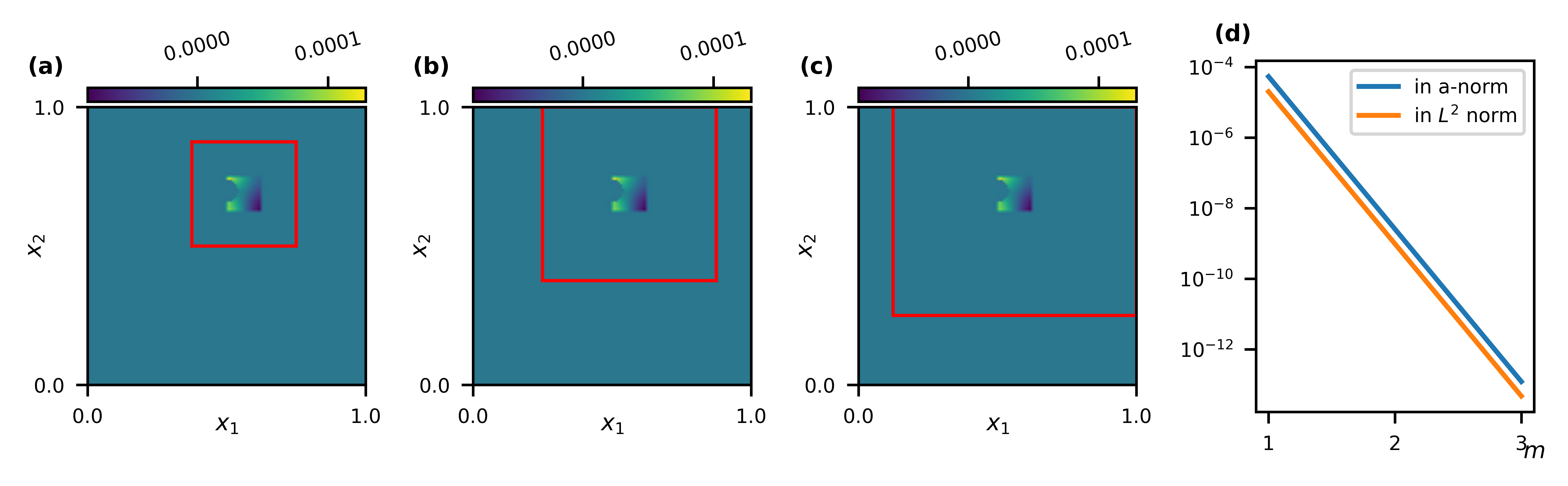}
\caption{   \SubplotTag{(a)} multiscale basis function with $m=1$;   \SubplotTag{(b)} multiscale basis function with $m=2$;  \SubplotTag{(c)} multiscale basis function with $m=3$;   \SubplotTag{(d)} Relative differences
of the multiscale basis functions with $m=1,2,3$ and the global basis functions in the a-norm and $L^2$ norm.
}
 \label{fig:maxwell-multiscale}
\end{figure}

{\subsection{2D high-contrast domain}}
{In this section, we consider the Maxwell problem with a high-contrast coefficient $\mu^{-1}(x,y)$ posed on a two-dimensional suggested in \cite{chung2026multiscalemethodswavepropagation, chung2018} with suitable boundary conditions and source functions. 
The contrast ratio of the medium is set to $\Upsilon = 10^3$ .  To further investigate the performance and scalability of the proposed method, 
we conduct experiments on progressively refined fine grids, with resolutions up to $256^2$ in selected tests. The coarse mesh size $H$ is chosen from the set $\{1/32,\,1/16,\,1/8,\,1/4\}$.}

{In this example, we conduct numerical experiments to assess the robustness of the proposed method shown in Figure~\ref{fig:2d-maxwell}. When $m = 4$, the method achieves relative errors on the order of $10^{-3}$ in both the relative $L^2$ norm and the energy norm for fine coarse meshes, exhibiting clear linear convergence behavior with respect to $H$. 
In \cref{cpu4}, we present detailed CPU time results together with the corresponding numbers of degrees of freedom (DOFs). Based on the linear convergence observed in \cref{fig:2d-maxwell}, we fix $m = 4$. For simplicity, only the online stage is reported, using four basis functions per coarse element. As shown in \cref{cpu4}, CEM-GMsFEM reduces the number of DOFs from $131584$ (FEM) to $4096$ for $H = 1/32$, while maintaining a comparable online computational cost. These results demonstrate that the proposed method achieves high accuracy with a substantially reduced system size. These extended tests confirm that the observed linear convergence in $H$ and exponential decay with respect to $m$ remain stable under further refinement, while the computational savings in DOFs and online CPU time become more pronounced for larger problem sizes. 
These results provide stronger evidence of the efficiency and scalability of the proposed CEM-GMsFEM framework.}
\begin{figure}[!ht]
\centering
 \includegraphics[width=\linewidth]{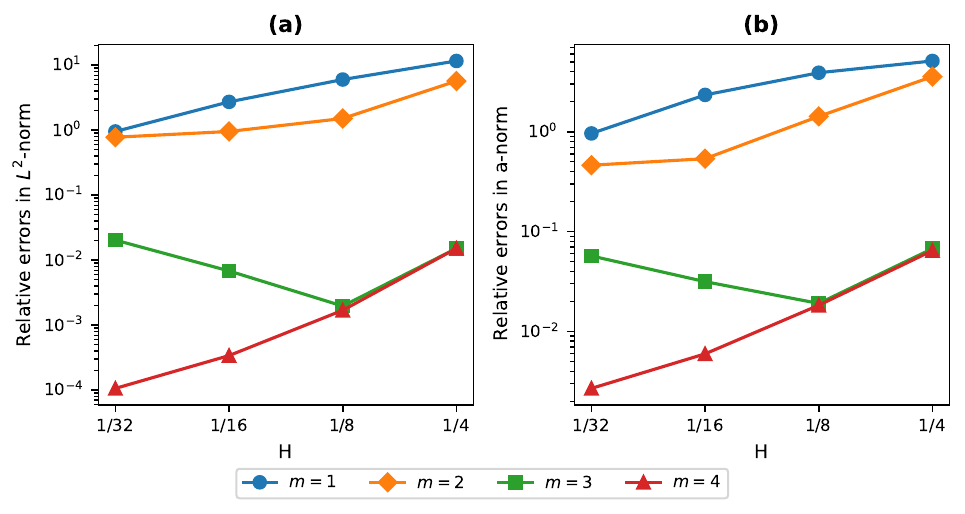}
\caption{ Subplots (a) and (b) show the relative errors of the proposed method with different numbers of oversampling layers $m$ w.r.t. the coarse mesh size $H$, but measured in different norms.}
 \label{fig:2d-maxwell}
\end{figure}
\begin{table}[ht]
\centering
\caption{{Comparison of DOFs and CPU time for the reference solution and the proposed CEM-GMsFEM methods with $l_i=4$.}}
\begin{tabular}{lccccc}
\hline
\multirow{2}{*}{$(m,l_i)= (4,4)$} 
& Reference solution 
& \multicolumn{3}{c}{CEM-GMsFEM solution } \\
\cline{2-6}
& $h=1/256$ 
& $H=1/32$ 
& $H=1/16$ 
& $H=1/8$ 
& $H=1/4$\\
\hline
DOFs 
& 131584
& 4096
&  1024
&  256
&  64 \\  
Time (s) 
&  1.361
& 0.941
&  0.629
&   0.055 
&   0.001\\
\hline
\end{tabular}
\label{cpu4}
\end{table}
\section{Conclusions}
\label{sec:conclusions}
In this paper, we propose a new multiscale method for solving the Maxwell equations. In the construction of the spectral problem, we introduce a global auxiliary space that eliminates the divergence-free constraint by exploiting the fact that the wave number \( k \) is strictly positive. For locally high-contrast media, {the error analysis shows that the proposed multiscale approximation converges to the fine scale solution at a linear rate in the energy norm}. Numerical experiments are presented to validate the theoretical convergence results. The extension of the CEM-GMsFEM framework to higher-order convergence and its application to related wave propagation problems are left for future research.


\section*{Declaration of competing interest}

The authors declare that they have no known competing financial interests or personal relationships that could have appeared
to influence the work reported in this paper.

\section*{Declaration of Generative AI and AI-assisted technologies in the writing process}

During the preparation of this work the authors used ChatGPT in order to improve readability and language. After
using this tool, the authors reviewed and edited the content as needed and take full responsibility for the content of the
publication.

\section*{Acknowledgments}
\input{acknowledgements.tex}

\bibliographystyle{siamplain}
\input{refs-bib-path.tex}

\end{document}

%% file: latex-commands.tex
\usepackage{mathtools}

\allowdisplaybreaks

\usepackage{amssymb}
\usepackage[mathscr]{eucal}

\usepackage{mismath}

\usepackage{hyperref}

\usepackage{bm} 

\usepackage{graphicx}
\graphicspath{{figs/}}

\usepackage{tikz}
\usetikzlibrary{math}
\usetikzlibrary{calc}

\usepackage{subcaption}

\usepackage{multirow}

\usepackage{makecell}
\setcellgapes{2pt}

\usepackage{booktabs}

\usepackage{siunitx}
\usepackage{algorithm}
\usepackage{algpseudocode}

\usepackage[skins]{tcolorbox}
\newtcolorbox{stepbox}[2][]{%
  enhanced,
  attach boxed title to top center={yshift=-3mm,yshifttext=-1mm},
  colframe=blue!75!black,
  colbacktitle=red!80!black,
  fonttitle=\bfseries,
  title=#2,#1
}

\newcommand{\SubplotTag}[1]{\textbf{\small \textsf{#1}}}



%% file: abstract.tex
Modeling time-harmonic Maxwell problems in heterogeneous media presents significant mathematical and computational challenges. Due to the inherent non-elliptic structure and non-coercive nature of Maxwell equations, conventional methods face severe numerical instabilities, particularly in high-contrast media and at high wave numbers. These challenges often lead to ill-conditioned discrete systems and prohibitively high computational costs, limiting their practical applicability.
To overcome these challenges, we introduce an efficient multiscale framework for time-harmonic Maxwell equations with impedance boundary conditions in high-contrast media. A major novelty of this study lies in circumventing the need for an explicit divergence-free constraint on multiscale basis functions. To achieve this, an auxiliary space is constructed via local spectral problems incorporating a mass term and a Silver-Müller-type boundary penalty. This novel design guarantees the coercivity of the corresponding bilinear form and automatically excludes the kernel of the curl operator from the leading eigenspaces. 
Building upon the auxiliary space, we then construct the multiscale space by using a distinct bilinear form. By exploiting a resolution condition and establishing key norm relationships, we rigorously prove the coercivity of this modified bilinear form—a crucial property that underpins the whole analysis. Theoretical analysis shows that, with appropriate oversampling, the method achieves \(O(H)\) convergence independent of the local contrast and the approximation error increases with the wave number $k$. 
Extensive numerical experiments are reported to validate the effectiveness of the proposed approach.

%% file: acknowledgements.tex
Eric T. Chung's work is partially supported by the Hong Kong RGC General Research Fund (Project number: 14304525). Part of this work was completed during Xingguang Jin’s visit to the Hausdorff Research Institute for Mathematics at the University of Bonn. He would like to thank Dr.~Moritz Hauck for his fruitful discussions and the Hausdorff Research Institute for Mathematics for the support provided through the Hausdorff Fellowship.

%% file: refs-bib-path.tex

%% file: main-text.bbl
\begin{thebibliography}{10}

\bibitem{Bensoussan2011}
{A. Bensoussan}, {J.-L. Lions}, and {G. Papanicolaou}, {\em Asymptotic analysis for periodic structures}, AMS Chelsea Publishing, American Mathematical Society, Providence, RI, (2011). Vol.~374.

\bibitem{Cao2010}
{L. Cao}, {Y. Zhang}, {W. Allegretto}, and {Y. Lin}, {\em Multiscale asymptotic method for Maxwell's equations in composite materials}, SIAM J. Numer. Anal. \textbf{47} (2010), pp. 4257--4289.


\bibitem{chung2023multiscale}
{E. Chung, Y. Efendiev and T. Y. Hou}, {\em Multiscale Model Reduction}, Springer, 2023.

\bibitem{chung2018}
{E. T. Chung, Y. Efendiev and W. T. Leung}, {\em Constraint energy minimizing generalized multiscale finite element method}, Computer Methods in Applied Mechanics and Engineering. \textbf{339} (2018), pp. 298--319.

{\bibitem{Chung2019}
{E. T. Chung} and {Y. Li}, {\em Adaptive generalized multiscale finite element methods for H (curl)-elliptic problems with heterogeneous coefficients}, J. Comput. Appl. Math. \textbf{345} (2019), pp. 357--373.}


\bibitem{chung2025locking}
{E. T. Chung, C. Ye and X. Zhong}, {\em A locking free multiscale method for linear elasticity in stress-displacement formulation with high contrast coefficients}, Computer Methods in Applied Mechanics and Engineering. \textbf{447} (2025), pp. 118342.

\bibitem{Chung2026}
{E. T. Chung}, {H. H. Kim} and {X. Zhong}, {\em Iterative contact-resolving hybrid methods for multiscale contact mechanics}, Comput. Methods Appl. Mech. Eng. \textbf{453} (2026), pp. 118843.


\bibitem{Ciarlet2017}
{P. Ciarlet, S. Fliss}, and {C. Stohrer}, {\em On the approximation of electromagnetic fields by edge finite elements. Part 2: A heterogeneous multiscale method for Maxwell's equations}, Comput. Math. Appl. \textbf{73} (2017), pp. 1900--1919.

\bibitem{EW2003}
{W. E and B. Engquist}, {\em The heterognous multiscale methods}, Communications in Mathematical Sciences. \textbf{1(1)} (2003), pp. 87--132.

\bibitem{EWBE2003}
{W. E, B. Engquist and Z. Huang}, {\em Heterogeneous multiscale method: a general methodology for multiscale modeling}, Physical Review B. \textbf{67(9)} (2003), pp. 092101.

\bibitem{YTY2009}
{Y. Efendiev and T. Y. Hou}, {\em Multiscale finite element methods: theory and applications}, Springer Science \& Business Media. \textbf{4} (2009).

\bibitem{BEYH2005}
{B. Engquist and Y. H.~Tsai}, {\em Heterogeneous multiscale methods for stiff ordinary differential equations. Mathematics of computation},  Mathematics of computation. \textbf{74(252)} (2005), pp. 1707--1742.

\bibitem{Galvis2010}
{J. Galvis} and {Y. Efendiev}, {\em  Domain decomposition preconditioners for multiscale flows in high-contrast media}, SIAM Multiscale Modeling \& Simulation. \textbf{8(4)} (2010), pp. 1461--1483.

\bibitem{Henning2016}
{P. Henning}, {M. Ohlberger}, and {B. Verfürth}, {\em A new heterogeneous multiscale method for time-harmonic Maxwell's equations}, SIAM J. Numer. Anal. \textbf{54} (2016), pp. 3493--3522.

\bibitem{Henning2020}
{P. Henning} and {A. Persson}, {\em Computational homogenization of time-harmonic Maxwell's equations}, SIAM Journal on Scientific Computing. \textbf{42(3)} (2020), pp. B581--B607.

\bibitem{Hiptmair2002}
{R. Hiptmair}, {\em Finite elements in computational electromagnetism}, Acta Numer. \textbf{11} (2002), pp. 237--339.


\bibitem{Hochbruck2017}
{M. Hochbruck} and {C. Stohrer}, {\em Finite element heterogeneous multiscale method for time dependent Maxwell's equations}, in Spectral and High Order Methods for Partial Differential Equations---ICOSAHOM 2016, Lect. Notes Comput. Sci. Eng. \textbf{119} (2017), pp. 269--281.

{\bibitem{Holloway2009} {C. L. Holloway, A. Dienstfrey, E. F. Kuester, J. F. O'Hara, A. K. Azad, and A. J. Taylor}, {\em A discussion on the interpretation and characterization of metafilms/metasurfaces: The two-dimensional equivalent of metamaterials}, Metamaterials. \textbf{3(2)} (2009), pp. 100--112.}

\bibitem{HTY1997}
{T. Y. Hou and X. H. Wu}, {\em A multiscale finite element method for elliptic problems in composite materials and porous media}, Journal of computational physics. \textbf{134(1)} (1997), pp. 169--189.

\bibitem{HFJL1998}
{T. J. Hughes, G. R. Feijóo, L. Mazzei, and J. B. Quincy}, {\em The variational multiscale method—a paradigm for computational mechanics}, Computer methods in applied mechanics and engineering. \textbf{166(1-2)} (1998), pp. 3--24.


\bibitem{Johnson1999}
{S. G. Johnson}, {S. Fan}, {P. R. Villeneuve}, {J. D. Joannopoulos}, and {L. A. Kolodziejski}, {\em Guided modes in photonic crystal slabs}, Phys. Rev. B \textbf{60} (1999), pp. 5751--5758.

\bibitem{Lamacz2016}
{A. Lamacz} and {B. Schweizer}, {\em A negative index meta-material for Maxwell's equations}, SIAM J. Math. Anal. \textbf{48(6)} (2016), pp. 4155--4174.

\bibitem{Leonhardt2006}
{U. Leonhardt}, {\em Optical conformal mapping}, Science \textbf{312(5781)} (2006), pp. 1777--1780.


\bibitem{Lipton2018}
{R. Lipton} and {B. Schweizer}, {\em Effective Maxwell's equations for perfectly conducting split ring resonators}, Arch. Ration. Mech. Anal. \textbf{229(3)} (2018), pp. 1197--1221.

\bibitem{Ma2025}
{C. Ma} and {Y. Zhang}, {\em Multiscale model reduction and two-level Schwarz preconditioner for H(curl) elliptic problems}, preprint, arXiv:2506.07381 (2025).

\bibitem{AMDP2014}
{A. Målqvist and D. Peterseim}, {\em Localization of elliptic multiscale problems}, Mathematics of Computation. \textbf{83(290)} (2014), pp. 2583--2603.

\bibitem{Meade2008}
{R. D. Meade}, {S. G. Johnson}, and {J. N. Winn}, {\em Photonic crystals: Molding the flow of light}, Princeton University Press, Princeton, NJ, (2008).

\bibitem{Monk2003}
{P. Monk}, {\em Finite element methods for Maxwell's equations}, Oxford university press. (2003).

\bibitem{Pendry2000}
{J. B. Pendry}, {\em Negative refraction makes a perfect lens}, Phys. Rev. Lett. \textbf{85(18)} (2000), pp. 3966.

{\bibitem{Pendry1999} {J. B. Pendry, A. J. Holden, D. J. Robbins, and W. J. Stewart}, {\em Magnetism from conductors and enhanced nonlinear phenomena}, IEEE Transactions on Microwave Theory and Techniques. \textbf{47(11)} (1999), pp. 2075--2084.}

\bibitem{Peterseim2017}
{D. Peterseim}, {\em Eliminating the pollution effect in Helmholtz problems by local subscale correction}, Mathematics of Computation. \textbf{86(305)} (2017), pp. 1005--1036.

\bibitem{PDR2016}
{D. Peterseim, and R. Scheichl}, {\em Robust numerical upscaling of elliptic multiscale problems at high contrast}, Computational Methods in Applied Mathematics. \textbf{16(4)} (2016), pp. 579--603.


\bibitem{Sakoda2005}
{K. Sakoda}, {\em Optical properties of photonic crystals}, Springer, Berlin, Heidelberg, (2005).

\bibitem{Smith2004}
{D. R. Smith}, {J. B. Pendry}, and {M. C. Wiltshire}, {\em Metamaterials and negative refractive index}, Science \textbf{305(5685)} (2004), pp. 788--792.

\bibitem{Verfuerth2019}
{B. Verf{\"u}rth}, {\em Heterogeneous multiscale method for the Maxwell equations with high contrast}, ESAIM: Math. Model. Numer. Anal. \textbf{53(1)} (2019), pp. 35--61.


\bibitem{Veselago1967}
{V. G. Veselago}, {\em The electrodynamics of substances with simultaneously negative values of \(\varepsilon\) and \(\mu\)}, Uspekhi Fizicheskikh Nauk. \textbf{92(3)} (1967), pp. 517--526.

{\bibitem{Wang2026}
{Y. Wang}, {W. T. Leung}, and {G. Li}, {\em Numerical homogenization for indefinite time-harmonic Maxwell equations}, arXiv preprint arXiv:2604.22502 (2026).}


\bibitem{ye2024multiscale}
{C. Ye}, {X. Jin}, {P. Ciarlet Jr.}, and {E. T. Chung}, {\em Multiscale modeling for a class of high-contrast heterogeneous sign-changing problems}, preprint, arXiv:2407.17130 (2024).

\bibitem{Zhou2026}
{Y. Zhou}, {X. Zhong}, {C. Ye} and {E. T. Chung}, {\em Efficient Multiscale Methods for Highly Heterogeneous Spatial Network Models}, arXiv preprint arXiv:2605.09280 (2026).

\bibitem{JinLiuZhongChung2025}
X.~Jin, L.~Liu, X.~Zhong, and E.~T.~Chung,
{\em Efficient numerical method for the Schr\"{o}dinger equation with high-contrast potentials},
SIAM Multiscale Modeling \& Simulation.
\textbf{23(4)} (2025), pp.~1581--1606.

{\bibitem{Xie2026}
W.~Xie, E.~T.~Chung, Y.~Yang, and Y.~Huang,
\newblock Adaptive multiscale model reduction for linear elasticity equation in perforated domains,
\newblock arXiv preprint arXiv:2606.06839, 2026.}

\bibitem{chung2026multiscalemethodswavepropagation}
E.~T.~Chung, P.~Ciarlet Jr., X.~Jin, and C.~Ye,
\newblock Multiscale Methods for wave propagation in materials with sign-changing coefficients,
\newblock {\em arXiv preprint arXiv:2511.20103}, 2026.
\end{thebibliography}
